\newtheorem{theorem}{Theorem}[section]
\newtheorem{proposition}{Proposition}
\newtheorem{assumption}{Assumption}
\theoremstyle{definition}
\newtheorem{definition}[theorem]{Definition}
\newtheorem{remark}{Remark}
\title[The IDSA and the Homogeneous Sphere] 
      {The IDSA and the Homogeneous Sphere:\\ Issues and possible
        improvements} 
\author[J\'er\^ome Michaud]{}
\subjclass{Primary: 35B40, 35Q85, 85A25, 41A25; Secondary: 35Q20, 82C70.}
 \keywords{Radiative transfer, homogeneous sphere, IDSA, diffusion
   limit, mo\-de\-ling error.}
 \email{Jerome.Michaud@unige.ch}
\begin{document}

\maketitle

\centerline{\scshape J\'er\^ome Michaud}
\medskip
{\footnotesize
 \centerline{Universit\'e de Gen\`eve, Section de Math\'ematiques,}
   \centerline{2--4, rue du Li\`evre, CP 64,}
   \centerline{ CH-1211 Gen\`eve}
} 



\bigskip

\begin{abstract}
In this paper, we are concerned with the study of the \emph{Isotropic Diffusion Source Approximation} (IDSA) \cite{LiebendoerferEtAl09big} of radiative transfer. After having recalled well-known limits of the radiative transfer equation, we present the IDSA
and adapt it to the case of the homogeneous sphere. We then show that for this example the IDSA suffers from severe numerical difficulties. We argue that these difficulties originate in the min-max switch coupling mechanism used in the IDSA. To overcome this problem we reformulate the IDSA to avoid the problematic coupling. This allows us to access the modeling error of the IDSA for the homogeneous sphere test case. The IDSA is shown to overestimate the streaming component, hence we propose a new version of the IDSA which is numerically shown to be more accurate than the old one.
 Analytical results and numerical tests are provided to support the accuracy of the new proposed approximation.
\end{abstract}

\tableofcontents
\section{Introduction}\label{sec:Intro}
In many astrophysical problems radiation plays a crucial role. As an
example, core-collapse supernov\ae$\ $ (CCSNe) explosion mechanisms
crucially depend on the radiative transfer of neutrinos. In such
practical calculation, the
numerical computation of the full transfer equation is usually too
costly and one has to rely on approximations.

In the context of CCSNe, Liebend\"orfer et
al. \cite{LiebendoerferEtAl09big} have designed the \emph{Isotropic
  Diffusion Source Approximation} (IDSA). This approximation is based
on asymptotic solutions of the radiative transfer equations, see
\cite{BeFrGaLiMiVa12big} for a rigorous derivation of these asymptotic
solutions; the coupling is then performed with the so-called
\emph{Diffusion Source} $\Sigma$ which has the form of a min-max
switch. In \cite{BeFrGaLiMiVa12big}, it is shown that this particular
coupling mechanism might be problematic and mathematical issues are
discussed. The aim of this paper is to study the behavior of the IDSA on a simple radiative
transfer model and try to address some of the
mathematical issues highlighted in \cite{BeFrGaLiMiVa12big}.

The paper is organized as follows. In Section \ref{sec:RadTr} we present the simple radiative transfer model to which we apply the IDSA. We also present the first three angular moments equation associated to it together with the well-known asymptotic limits of these equations, that is the \emph{diffusion
limit}, the \emph{reaction limit} and the \emph{free streaming
limit}.
Section \ref{IDSA} is devoted to the presentation of the IDSA
adapted to our model problem. We discuss the three regimes of this
approximation together with mathematical issues illustrated by
numerical experiments. In Section \ref{sec:HS}, we discuss the
homogeneous sphere test case and its limit of high opacities. We then
reformulate the IDSA in order to remove the diffusion source $\Sigma$
and propose some modifications of the IDSA equations. In Section
\ref{sec:Num}, we perform numerical experiments on the two new
formulations of the IDSA and study their modeling error in the case of
the homogeneous sphere. We conclude in Section \ref{sec:Conclusion} by
discussing the different ways this work can be used in practical
simulations of CCSNe explosions.

\section{Radiative transfer equation and asymptotic limits}\label{sec:RadTr}
In this section, we consider the monochromatic (independent of energy)
radiative transfer equation in spherical symmetry. We assume the
background to be static and exclude inelastic and anisotropic
scattering processes. The radiative transport equation in natural
units ($c = 1$) reads
\begin{equation}\label{eq:RadTransp}
\mathcal L_{\rm f}(f) :=\partial_t f + \mu \partial_rf+\frac{1-\mu^2}{r}\partial_\mu f=
\kappa_{\rm a}(b-f)+\kappa_{s}(J-f),
\end{equation}
where $f(t,r,\mu)$ is the distribution function of the transported
particles that depends on the time $t$, radius $r$ and cosine of the
angle between the propagation direction and the radial direction $\mu$; $\kappa_{\rm a}(r)$ and $\kappa_{\rm s}(r)$ are the absorption
and scattering opacities, respectively; $b(r,\mu)$ is the equilibrium
distribution and $J(t,r)$ is the angular average of $f(t,r,\mu)$
defined in Eq. \eqref{eq:Moments}. We
also introduce the linear transport operator $\mathcal L_{\rm f}$ of
$f$. The first two moment equations are obtained by multiplying
Eq. \eqref{eq:RadTransp} by $1$ and $\mu$ respectively and integrating
over $\mu$. This gives
\begin{equation}\label{EqJ}
\mathcal L_{\rm J}(J,H) :=\partial_t J +\frac{1}{r^2}\partial_r(r^2H)=\kappa_{\rm a}(B-J),
\end{equation}
and
\begin{equation}\label{EqH}
\mathcal L_{\rm H}(J,H,K) :=\partial_t H +\frac{1}{r^2}\partial_r(r^2K) +\frac{K-J}{r}=-(\kappa_{\rm a}+\kappa_{\rm s})H,
\end{equation}
with the standard definitions
\begin{equation}\label{eq:Moments}
\{J, H, K\}:=\frac{1}{2}\int_{-1}^{1} d\mu\ \mu^{\{0,1,2\}}f.
\end{equation}
The two moments equations \eqref{EqJ} and \eqref{EqH} basically
express the conservation of energy and momentum. The three angular
moments $J,H$ and $K$ of the radiation field are proportional to the
radiation energy density, flux and pressure.
We also used the integrated equilibrium distribution $B =
\frac{1}{2}\int_{-1}^{1}b\, d\mu$ and defined the transport ope\-ra\-tors
 $\mathcal L_{\rm J}$ and $\mathcal L_{\rm H}$ for $J$ and $H$,
 respectively. Note that these operators also depends on higher order
 moments and we obtain in general a hierarchy of equations that needs
 to be closed by a prescription of the form
\begin{equation}\label{eq:closure}
H = H(J),\qquad K = K(H,J),
\end{equation}
for a one-moment or a two-moment closure, respectively. A classical
two-moment closure has been introduced in~\cite{LevermorePomraning81}
and other two-moment closures are discussed
in~\cite{SmitHornBludman00} in relation with flux-limited
diffusion. It is also convenient to define the following flux factors.
\begin{definition}[Flux ratio and variable Eddington factor]\label{def:FF}
The \emph{flux ratio} $h(t,r)$ is defined by 
\begin{equation}
h(t,r):=\frac{H(t,r)}{J(t,r)},
\end{equation}
and the \emph{variable Eddington factor} $k(t,r)$ is defined by
\begin{equation}
k(t,r):=\frac{K(t,r)}{J(t,r)}.
\end{equation}
\end{definition}
In order to enable a complete discussion of the IDSA, we need to
discuss the asymptotic behavior of the radiative transfer equation
because the IDSA is based on the coupling of three different
asymptotic limits of this equation as shown in
\cite{BeFrGaLiMiVa12big}. We discuss the diffusion limit, the reaction
limit and the free streaming limit in terms of closure relations
similar to Eq. \eqref{eq:closure}.
\subsection{Diffusion limit}
Following \cite[pp. 350--353]{Mihalas84}, we can use the diffusion ap\-pro\-xi\-ma\-tion of \eqref{eq:RadTransp} by imposing the closure conditions
\begin{equation}\label{eq:diff.closure}
H=-\frac{1}{3\kappa}\frac{\partial J}{\partial r},\qquad
K=\frac{1}{3}J,
\end{equation}
where $\kappa:=\kappa_{\rm a}+\kappa_{\rm s}$ is the total opacity.
These conditions lead to the diffusion equation
\begin{equation}\label{eq:J_sr.diff}
\mathcal L_{\rm J}^{\rm diff}(J):=\partial_t J - \frac{1}{r^2}\partial_r\left(r^2
  \frac{1}{3\kappa}\partial_r J\right) = \kappa_{\rm a}(B-J),
\end{equation}
where we also defined the diffusive transport operator $\mathcal
L_{\rm J}^{\rm diff}$ for $J$. In this limit, the variable Eddington
factor satisfies
\begin{equation}
k^{\rm diff}(t,r) = \frac{1}{3}.
\end{equation}
The diffusion limit is valid in regions where the mean free path
$\kappa^{-1}$ is short compared with the typical length scale of the
system. Such a region will be refered to as \emph{opaque}. This limit can also be obtained by asymptotic expansions, see
for example \cite{BeFrGaLiMiVa12big}. 

\subsection{Reaction limit}
The reaction regime is a special limit of the diffusion limit in which
the total opacity $\kappa$ is so large that the diffusion term can be
dropped in Eq.~\eqref{eq:J_sr.diff}. This corresponds to the following
closure relations
\begin{equation}
H=0,\qquad
K=\frac{1}{3}J.
\end{equation}
The corresponding evolution equation reads 
\begin{equation}\label{eq:J_sr.reac}
\mathcal L_{\rm J}^{\rm reac}(J):=\partial_t J = \kappa_{\rm a}(B-J),
\end{equation}
where we defined the reaction evolution operator $\mathcal L_{\rm
  J}^{\rm reac}$ for $J$ as before.
The reaction limit is valid in regions where the mean free path
$\kappa^{-1}$ is negligible compared with the typical length scale of
the system. As for the diffusion limit, one can use asymptotic
expansion with a different scaling in order to derive this limit, see
for example \cite{BeFrGaLiMiVa12big}.
\subsection{Stationary state free streaming limit}
The stationary state free streaming limit can be obtained by assuming
that the free streaming particles have infinite velocity. As a result,
the change of the distribution function is infinitely fast and one can
drop the time derivative term in Eq. \eqref{EqJ}. We obtain
\begin{equation}\label{eq:J_sr.free}
\mathcal L_{\rm J}^{\rm free}(J):=\frac{1}{r^2}\partial_r(r^2h^{\rm free}J) = \kappa_{\rm a}(B-J),
\end{equation}
where the free streaming flux ratio can be estimated by a geometrical
relation under the assumption that the streaming particles are emitted
from an infinitely opaque homogeneous sphere of radius $R$. In this
case, we have \cite{Bruenn85, LiebendoerferEtAl09big}
\begin{assumption}[Scattering sphere approximation]
The scattering sphere one-moment closure relation is given by the flux ratio
\begin{equation}\label{eq:hfree}
h^{\rm free}(r,R) = \left\{
\begin{array}{ll}
\displaystyle\frac{1}{2},&r<R,\\
\displaystyle\frac{1}{2}\left(1+\sqrt{1-\left(\frac{R}{r}\right)^2}\right),&r\geq R.
\end{array}
\right.
\end{equation} 
The radius $R$ is called the neutrinosphere radius.
\end{assumption}
This flux ratio corresponds, as we will rederive in Section
\ref{sec:HS}, to the flux ratio of an infinitely opaque sphere of
radius $R$. In Section \ref{sec:HS} we also compute the corresponding
variable Eddington factor. The neutrinosphere radius $R$ is usually
computed using the \emph{optical depth} $\tau$
\begin{equation}\label{eq:tau}
\tau(r) = \int_r^\infty \kappa(r) dr
\end{equation}
and the neutrinosphere radius satisfies $\tau(R) = \frac{2}{3}$.

As for the diffusion and the reaction limit, the stationary state
free streaming limit can be derived by asymptotic expansions, see
\cite{BeFrGaLiMiVa12big}. This approximation is valid where the mean
free path $\kappa^{-1}$ is large compared with the typical length
scale of the system. In these regions, the particles propagate freely
and we therefore refer to them as \emph{transparent} regions. In term
of flux factor, the free streaming limit satisfies
\begin{equation}
\lim_{r\to \infty}h^{\rm free}(r,R) = 1, \qquad \lim_{r\to \infty}k^{\rm free}(r,R) = 1.
\end{equation}
These limits come from the fact that in this case, the distribution
function is strongly outward peaked and accumulates on $\mu = 1$.

\section{Isotropic Diffusion Source Approximation (IDSA)}
\label{IDSA}
As mentioned in Section \ref{sec:Intro}, in practical calculations
the direct solve of Eq. \eqref{eq:RadTransp} is too costly and one
need to rely on approximations.
The \emph{Isotropic Diffusion Source Approximation} (IDSA) has been
developed by Liebend\"orfer et al. in \cite{LiebendoerferEtAl09big} in
the context of core-collapse supernov\ae$\ $(CCSNe) modeling. This is
an approximation of the 
radiative transfer of neutrinos in this kind of stellar explosion. It
is based on the observation that in a CCSN event, the inner core of
the star becomes dense enough to be opaque to neutrinos, whereas the
outer layers of the star are transparent. Following the discussion in
Section \ref{sec:RadTr}, the radiative transfer of neutrinos in the
inner core should be well described by 
the diffusion or the reaction limit, whereas in the outer layers it
should be well described by the free streaming limit. This observation
has led to the idea of using an additive decomposition of the
distribution function into two components: a \emph{trapped} component
describing the inner core neutrinos and a \emph{free streaming}
component describing the outer layers neutrinos. Liebend\"orfer and
colleagues also noted that it was enough, for their simulations, to
compute the angular averaged $J$ distribution function. The IDSA is
therefore an heterogeneous ap\-pro\-xi\-ma\-tion of the first angular moment
$J$ of the neutrino distribution function~$f$.
We now recall the hypotheses of the IDSA and state it for our model problem.

\subsection{Ansatz: Decomposition into trapped and streaming neutrinos}
\label{ansatz}

We assume a \emph{decomposition} of
\begin{equation}
\label{decomposition}
f= f^t + f^s
\end{equation}
\emph{on the whole domain} into distribution functions $f^t$ and $f^s$ supposed to account for \emph{trapped} and for \emph{streaming} neutrinos, respectively. This ansatz is also valid for all the moments by linearity of integration
\begin{eqnarray}
J&=&J^t + J^s,\\
H&=& H^t + H^s,\\
K&=& K^t + K^s,\label{decomposition.K}
\end{eqnarray}
where we used the notation introduced in Eq. \eqref{eq:Moments}.

The IDSA is a two components one-moment approximation of
Eq. \eqref{EqJ} that we recall here
\begin{equation}\label{EqJ2}
\mathcal L_{\rm J}(J,H) = \kappa_{\rm a} (B-J).
\end{equation}

\begin{assumption}
Eq. \eqref{EqJ2} can be well approximated by a system of equation of
the form
\begin{eqnarray}
\mathcal L^t_{\rm J}(J^t,H^t(J^t)) &=& \kappa_{\rm a} (B-J^t)
-\Sigma(J^t, J^s, B, \kappa_{\rm a},\kappa_{\rm s})\label{eq:tr}\\
\mathcal L^s_{\rm J}(J^s,H^s(J^s)) &=& -\kappa_{\rm a} J^s
+\Sigma(J^t, J^s, B, \kappa_{\rm a},\kappa_{\rm s})\label{eq:str}\\
\Sigma(J^t, J^s, B, \kappa_{\rm a},\kappa_{\rm s})&:=& \min\left \{
  \max\left[-\frac{1}{r^2}\partial_r\left(\frac{r^2}{3\kappa}\partial_r
      J^t\right)+\kappa_{\rm a}J^s ,0\right],\kappa_{\rm
    a}B\right\},\label{eq:ds} 
\end{eqnarray}
where $\Sigma$ is the \emph{diffusion source}, and $J^t+J^s \approx J$.
\end{assumption}
The IDSA is completed by defining the operators $\mathcal L^t_{\rm J}$
and $\mathcal L^s_{\rm J}$ together with the closure relations for the
trapped and the streaming components.
\begin{assumption}
We assume that $f^t$, $\kappa_{\rm a}$ and $\kappa_{\rm s}$ are
isotropic and that $f^s$ is in the stationary state free streaming
limit. Hence, the IDSA uses the following closure relations
\begin{equation}\label{eq:IDSAclosure}
H^t(J^t) = 0,\qquad H^s(J^s) = g_{\rm idsa} J^s,
\end{equation}
with $g_{\rm idsa} := h^{\rm free}$. Comparing this closure relations
with the asymptotic limits described in Section \ref{sec:RadTr}, we
have the following operator definitions
\begin{equation}\label{eq:IDSAop}
\mathcal L^t_{\rm J}:= \mathcal L_{\rm J}^{\rm reac},\qquad \mathcal
L^s_{\rm J}:= \mathcal L_{\rm J}^{\rm free}.
\end{equation}
\end{assumption}

The IDSA is defined by Eqs \eqref{eq:tr}--\eqref{eq:ds} together with
the closure relations given in Eq. \eqref{eq:IDSAclosure} and the
operator definitions given in Eq. \eqref{eq:IDSAop}. 
\begin{definition}[IDSA]
The IDSA equations are given by (dropping the dependence of $\Sigma$)
 \begin{eqnarray}
\mathcal L^{\rm reac}_{\rm J}(J^t) &=& \kappa_{\rm a} (B-J^t)
-\Sigma\label{eq:tr1}\\
\mathcal L^{\rm free}_{\rm J}(J^s) &=& -\kappa_{\rm a} J^s
+\Sigma\label{eq:str1}\\
\Sigma&:=& \min\left \{
  \max\left[-\frac{1}{r^2}\partial_r\left(\frac{r^2}{3\kappa}\partial_r
      J^t\right)+\kappa_{\rm a}J^s ,0\right],\kappa_{\rm a}B\right\},\label{eq:ds1}
\end{eqnarray}
\end{definition}
The diffusion
source $\Sigma$ is defined in a way to switch between three
different regimes to which we now come.

\subsection{The three regimes of the IDSA}\label{sub:regimes}
The IDSA has three distinct regimes corresponding to the different
values that the diffusion source $\Sigma$ can take:
\begin{enumerate}
\item Reaction regime: 
$$\Sigma = 0;$$
\item Diffusion regime:
$$\Sigma = -\frac{1}{r^2}\partial_r\left(\frac{r^2}{3\kappa}\partial_r
      J^t\right)+\kappa_{\rm a}J^s;$$
\item Free streaming regime: 
$$\Sigma = \kappa_{\rm a}B.$$
\end{enumerate}
We now discuss these three regimes.
\begin{remark}In this work, when speaking about the behavior of the
  IDSA, we will speak about \emph{regimes}. But when speaking about
  the asymptotic behavior of the real distribution, we will speak
  about \emph{limits}. Hence, it is important to distinguish clearly
  between these two terms.
\end{remark}
\subsubsection{Reaction regime}
In the reaction regime, the IDSA equations \eqref{eq:tr1} and
\eqref{eq:str1} become
\begin{eqnarray}
\mathcal L^{\rm reac}_{\rm J}(J_{\rm reac}^t) &=& \kappa_{\rm a} (B-J_{\rm reac}^t)
\label{eq:tr.reac}\\
\mathcal L^{\rm free}_{\rm J}(J_{\rm reac}^s) &=& -\kappa_{\rm a} J_{\rm reac}^s,
\label{eq:str.reac}
\end{eqnarray}
which shows that the trapped component is in the reaction limit, see
Eq. \eqref{eq:J_sr.reac} and $J_{\rm reac}^s$ is solution of
Eq. \eqref{eq:str.reac}, that is
\begin{equation}\label{eq:str.reac.2}
J_{\rm reac}^s(r) = \frac{C}{r^2}{\rm e}^{-r\int_0^r h^{\rm free}(r)\, dr}.
\end{equation}
Because $\frac1{2}\leq h^{\rm free}(r)\leq 1$, $J_{\rm reac}^s(r)> C\frac{{\rm
    e}^{-r^2}}{r^2}$ which diverges in $0$ in an unphysical
way. Therefore, we have $C=0$ and 
\begin{equation}
J^{\rm idsa}_{\rm reac} = J^{t}_{\rm reac}+J^{s}_{\rm reac} =  J^{t}_{\rm reac},
\end{equation}
and the solution of the IDSA in the reaction regime is in the reaction limit.
\subsubsection{Diffusion regime} In the diffusion regime, the IDSA
equations \eqref{eq:tr1} and
\eqref{eq:str1} become
\begin{eqnarray}
\mathcal L^{\rm reac}_{\rm J}(J_{\rm diff}^t) &=& \kappa_{\rm a}
(B-J_{\rm diff}^t-J_{\rm diff}^s)
+\frac{1}{r^2}\partial_r\left(\frac{r^2}{3\kappa}\partial_r
      J_{\rm diff}^t\right)
\label{eq:tr.diff}\\
\mathcal L^{\rm free}_{\rm J}(J_{\rm diff}^s) &=& -\frac{1}{r^2}\partial_r\left(\frac{r^2}{3\kappa}\partial_r
      J_{\rm diff}^t\right),
\label{eq:str.diff}
\end{eqnarray}
or equivalently
\begin{eqnarray}
\mathcal L^{\rm diff}_{\rm J}(J_{\rm diff}^t) &=& \kappa_{\rm a}
(B-J_{\rm diff}^t-J_{\rm diff}^s)
\label{eq:tr.diff1}\\
J_{\rm diff}^s &=& -\frac{1}{3\kappa
    h^{\rm free}}\partial_r
      J_{\rm diff}^t,
\label{eq:str.diff1}
\end{eqnarray}
that is the trapped component is in the diffusion limit up to a term
of absorption of streaming particles. The first equation is obtained
by passing the diffusion term on the left hand side, comparing with
Eq. \eqref{eq:J_sr.diff}. The second is obtained by multiplying
Eq. \eqref{eq:str.diff} by $r^2$ and integrating over $r$. This
modified diffusion limit 
can also be obtained by asymptotic expansions, this has been shown in
\cite{BeFrGaLiMiVa12big}.
\begin{remark}\label{rem:2}
Note that in order to have a positive distribution of streaming
particles $J^s_{\rm diff}\geq 0$, it is necessary to have a decreasing
distribution function of trapped $\partial_r J_{\rm diff}^t \leq
0$. That is, any non-decreasing solution of Eq. \eqref{eq:tr.diff1} is
not physical.
\end{remark}
Substituing \eqref{eq:str.diff1} into \eqref{eq:tr.diff1}, one can
decouple the two equations and the new equation for the trapped
particles is given by
\begin{equation}\label{eq:tr.with.adv}
\partial_t J_{\rm diff}^t - \frac{1}{r^2}\partial_r\left(r^2
  \frac{1}{3\kappa}\partial_r J_{\rm diff}^t\right) -\frac{\kappa_{\rm a}}{3\kappa
    h^{\rm free}}\partial_r J_{\rm diff}^t = \kappa_{\rm a} (B-J_{\rm diff}^t),
\end{equation}
where we explicit the diffusion transport operator $\mathcal L^{\rm diff}_{\rm J}$.

\subsubsection{Free streaming regime} In the free streaming regime,
the IDSA equations \eqref{eq:tr1} and
\eqref{eq:str1} become
\begin{eqnarray}
\mathcal L^{\rm reac}_{\rm J}(J_{\rm free}^t) = \partial_t J_{\rm free}^t &=& 
-\kappa_{\rm a}J_{\rm free}^t,
\label{eq:tr.free}\\
\mathcal L^{\rm free}_{\rm J}(J_{\rm free}^s) &=& \kappa_{\rm a}(B-
J_{\rm free}^s),
\label{eq:str.free}
\end{eqnarray}
which shows that the streaming component is in the stationary free
streaming limit, see Eq. \eqref{eq:J_sr.free}. The trapped equation
can be solve analytically, we obtain
\begin{equation}\label{eq:tr.free2}
J_{\rm free}^t(t) = C{\rm e}^{-\kappa_{\rm a}t},
\end{equation}
That is the trapped component is exponentially decreasing and vanishes
in the stationary state.

\subsection{Mathematical and numerical issues}
\label{sub:issues} 
We have discussed in Subsection~\ref{sub:regimes} that the different
regimes of the IDSA correspond to the asymptotic limit
of the radiative transfer equation. The main problem of the IDSA is
its treatment of transient regions where neither the diffusion, the free
streaming nor the reaction limit apply. Then, nevertheless, one of the
above regimes occurs. In the case of CCSN modeling, a discussion of
this topic by numerical results 
comparing the IDSA with the full radiative transfer equation in the
spherically symmetric case is given in
\cite{LiebendoerferEtAl09big}. In this paper, it is shown that the
IDSA solution approximates pretty well the reference solution computed
by solving the full radiative transfer equation. It is also shown that
the biggest deviations occur in the ``semi-transparent'' transient
region around the neutrinosphere. Similar results have been founded in
a numerical example for a model problem in
\cite{BeFrGaLiMiVa12_esaimbig}. The limiters for the coupling term in \eqref{eq:ds} can be physically
and mathematically motivated, see for example
\cite{LiebendoerferEtAl09big, BeFrGaLiMiVa12big}, but there does not
seem to be a rigorous 
explanation of these limitations by means of the full radiative transfer
equation.

In the following, we take a closer 
look at some properties of the diffusion source $\Sigma$ and show that
this coupling can lead to unphysical solutions and numerical instabilities.

\subsubsection{Spurious trapped}
\begin{figure}[h]
\centering

\subfigure[Solution at time $T = 5$.]{\label{Fig:I-4-SpTr.a}\includegraphics[width=.42\textwidth]{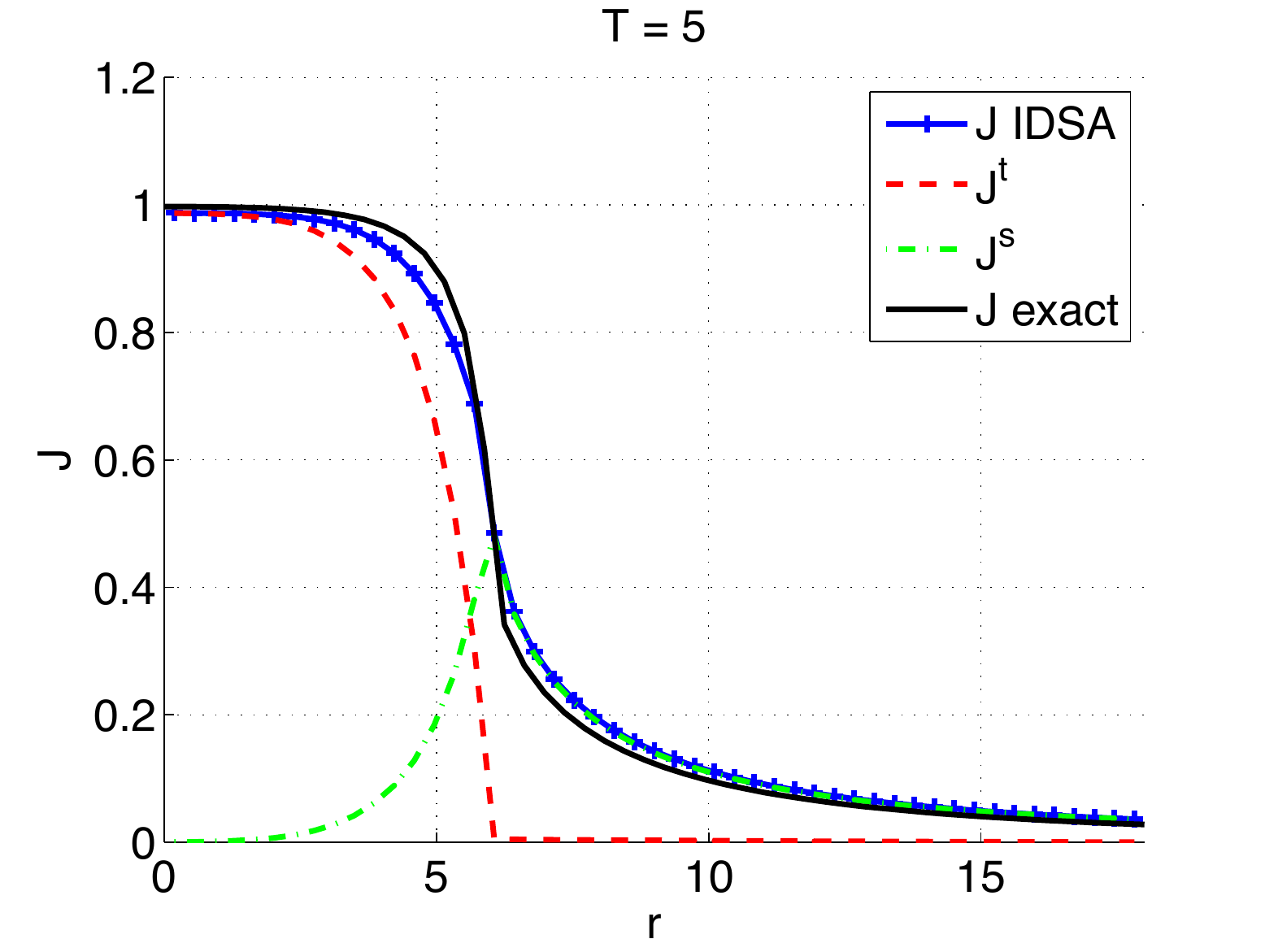}}
\quad
\subfigure[Proportion of trapped $h^t=\frac{J^t}{J^t+J^s}$ and streaming
particles $h^s=\frac{J^s}{J^t+J^s}$ at $T = 5$. ]{\label{Fig:I-4-SpTr.b}\includegraphics[width=.42\textwidth]{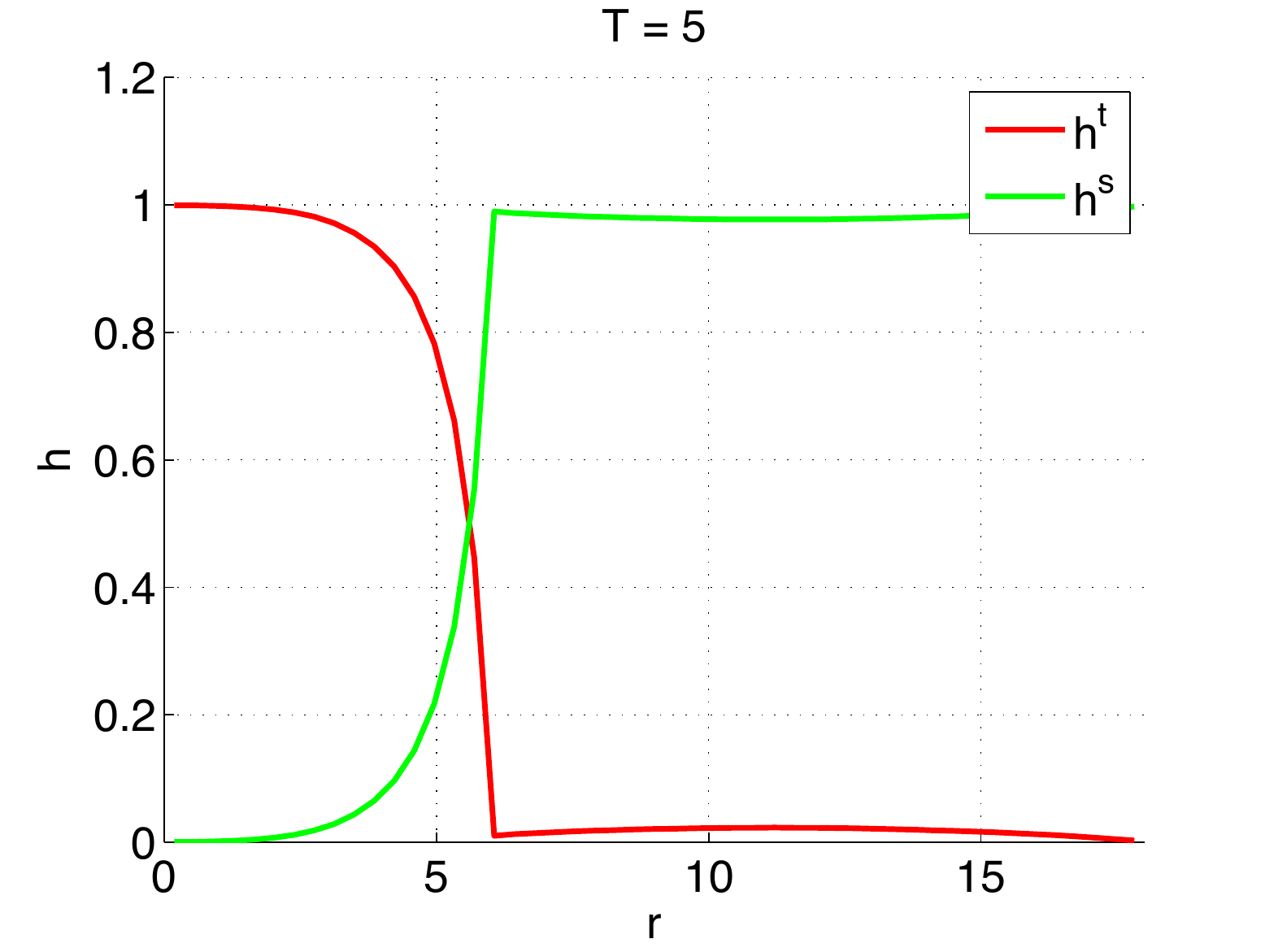}}

\subfigure[Solution at time $T = 500$.]{\label{Fig:I-4-SpTr.c}\includegraphics[width=.42\textwidth]{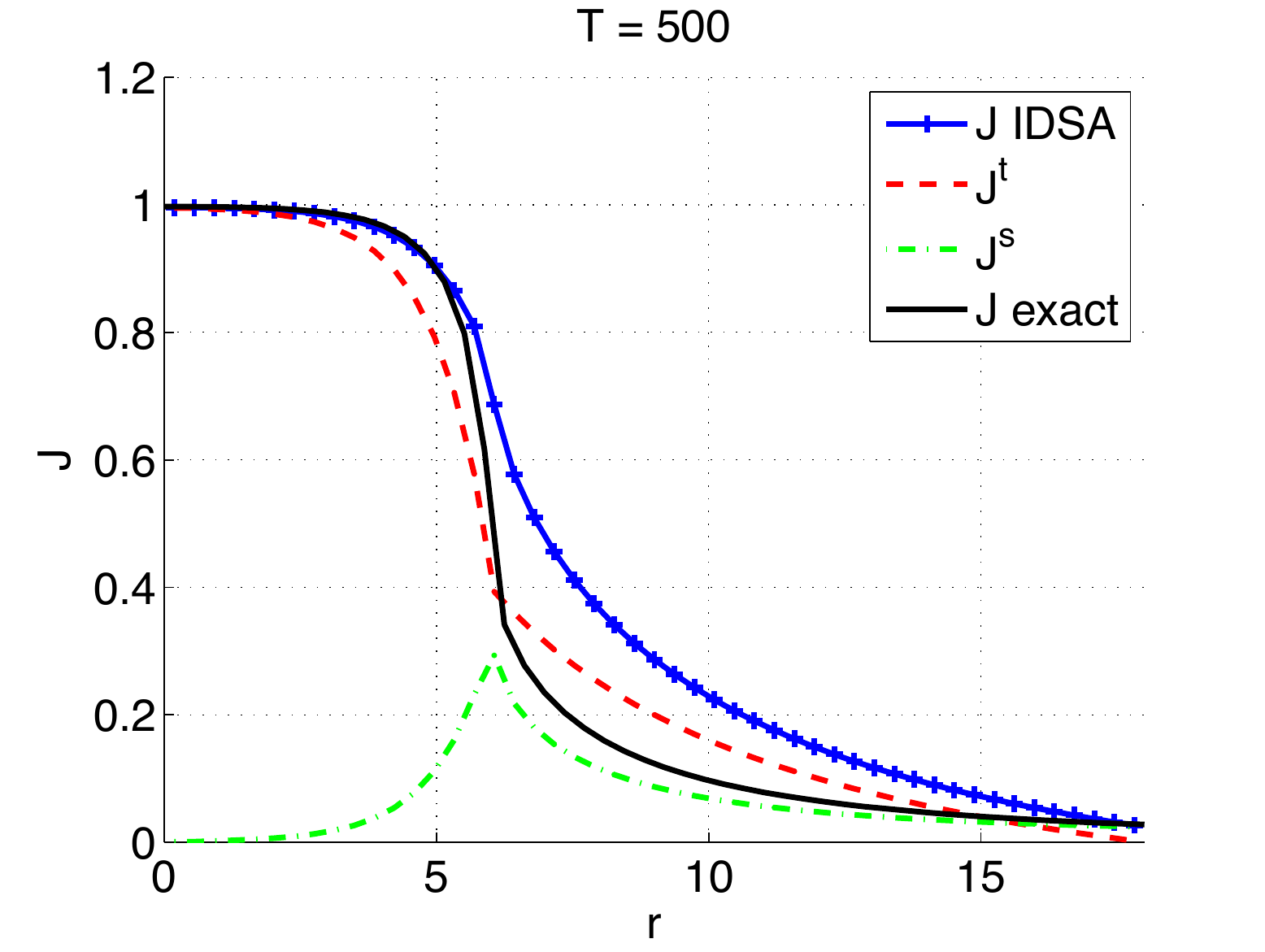}}
\quad
\subfigure[Proportion of trapped $h^t=\frac{J^t}{J^t+J^s}$ and streaming
particles $h^s=\frac{J^s}{J^t+J^s}$ at $T = 500$. ]{\label{Fig:I-4-SpTr.d}\includegraphics[width=.42\textwidth]{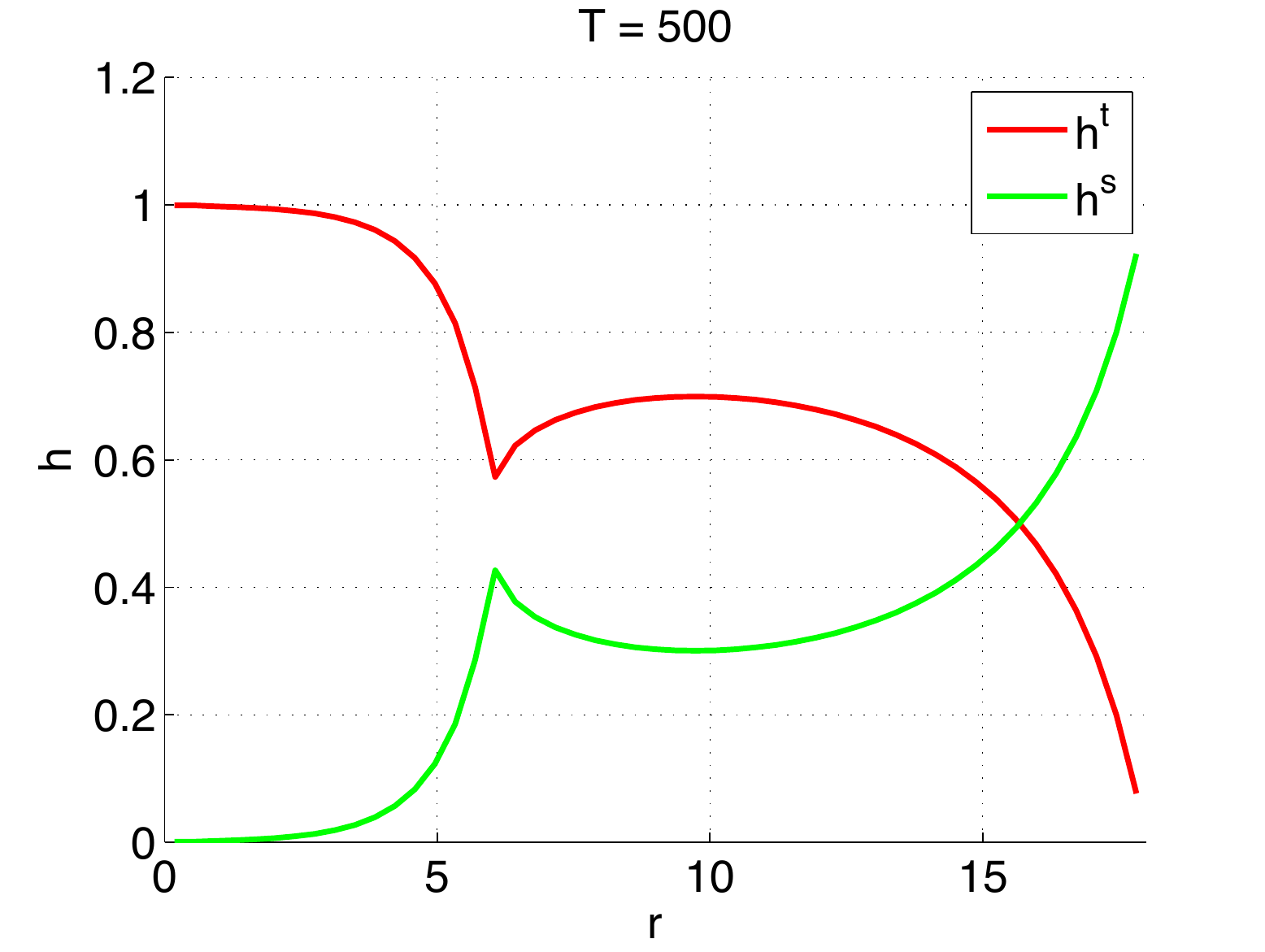}}

\subfigure[Solution at time $T = 1000$.]{\label{Fig:I-4-SpTr.e}\includegraphics[width=.42\textwidth]{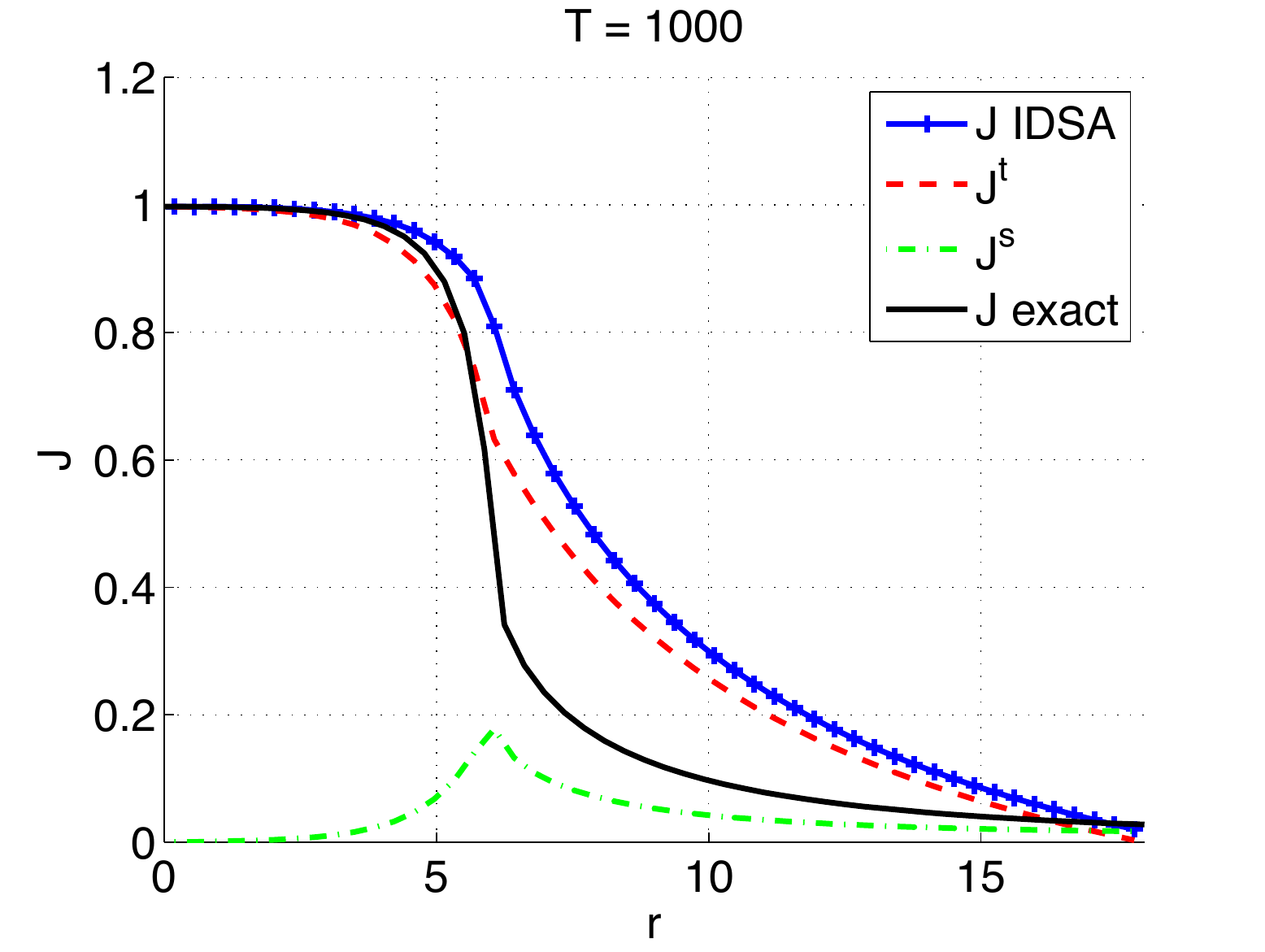}}
\quad
\subfigure[Proportion of trapped $h^t=\frac{J^t}{J^t+J^s}$ and streaming
particles $h^s=\frac{J^s}{J^t+J^s}$ at $T = 1000$. ]{\label{Fig:I-4-SpTr.f}\includegraphics[width=.42\textwidth]{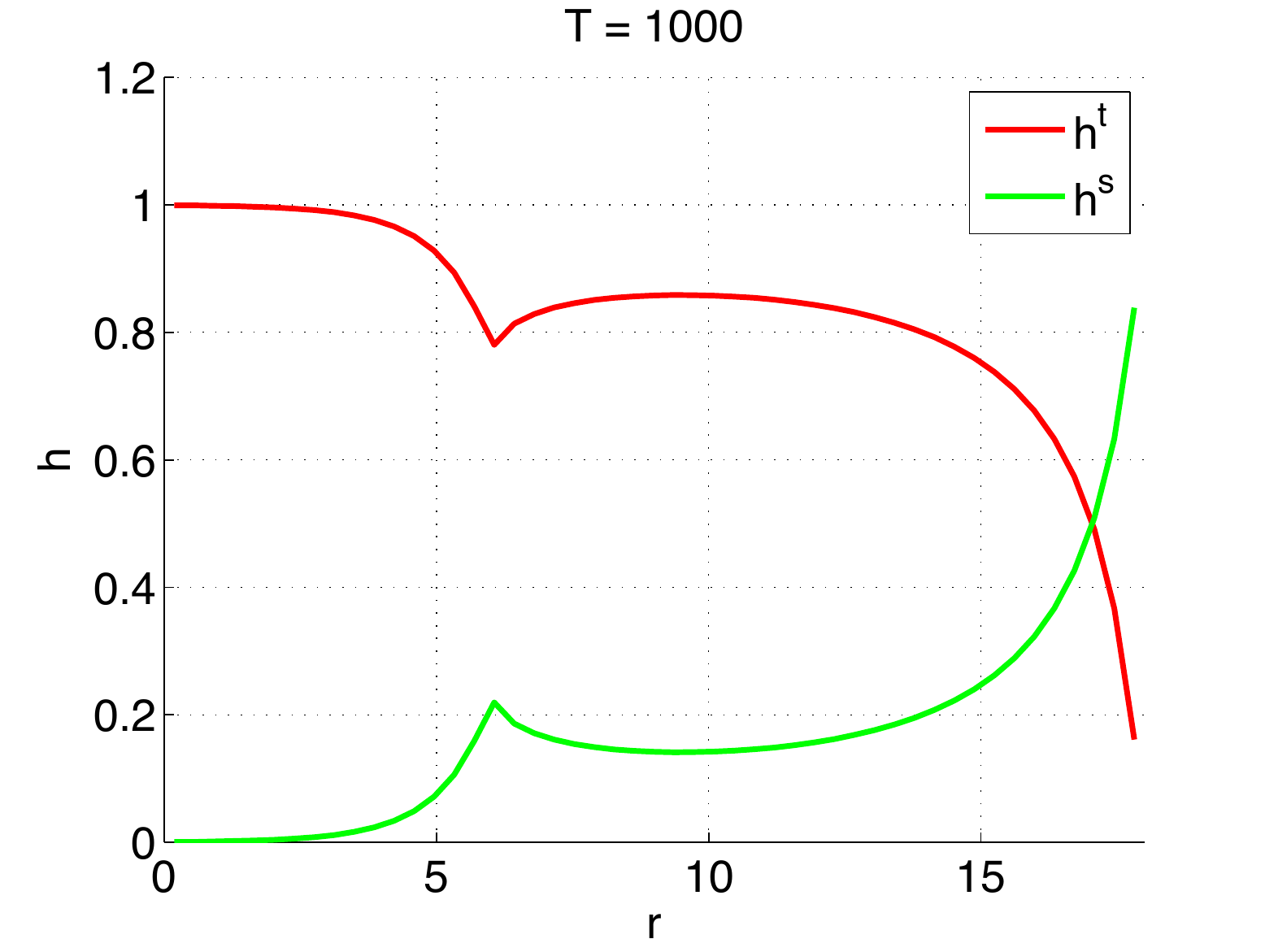}}

\caption{Results for the IDSA with small opacity $\kappa_{\rm a} = 10^{-3}$ for
  $r\geq 6$. The different panels show the evolution of the
  distributions. The black curve displays the solution of the
  homogeneous sphere. The right panels illustrate the takeover of
  trapped particles in the streaming region.}
\label{fig:SpuriousTrapped}
\end{figure}
The first problem that we discuss here is referred to as the \emph{Spurious
  Trapped} problem. This is a problem in modeling that occurs in regions with
small but non zero opacities ($\kappa$ small). The problem originates in the definition
of the coupling term $\Sigma$. Such a region should be transparent and
therefore described by the free streaming limit. Hence, the IDSA
limiter should always choose the 
free streaming regime in this case. We now construct an example in
which the definition of diffusion source $\Sigma$ leads to a wrong
regime. 
\begin{proposition}\label{prop:SpuriousTrapped} In a transparent
  region where $\kappa_{\rm a}= \mathcal
  O(\varepsilon)$ and where the distribution $J^t$ is
  locally constant and $J^s < B$, we have
\begin{equation}
\Sigma \neq \kappa_{\rm a}B,
\end{equation}
that is the IDSA does not choose the free streaming regime in this case.
\end{proposition}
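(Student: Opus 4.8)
The plan is to read off the value of $\Sigma$ directly from its definition \eqref{eq:ds1} under the stated hypotheses; there is essentially no analytic content, the point being that \emph{both} limiters are inactive here, so that the formula for $\Sigma$ reduces to something that is not the free-streaming value $\kappa_{\rm a}B$. First I would use the assumption that $J^t$ is locally constant in the strong sense $\partial_r J^t\equiv 0$ on the region considered. Then the divergence term $-\frac1{r^2}\partial_r\bigl(\frac{r^2}{3\kappa}\partial_r J^t\bigr)$ vanishes identically there (whatever $\kappa$ is), so the argument of the inner $\max$ in \eqref{eq:ds1} collapses to $\kappa_{\rm a}J^s$. Since opacities are non-negative and $J^s$ is the angular average of the non-negative streaming distribution $f^s$, we have $\kappa_{\rm a}\ge 0$ and $J^s\ge 0$, hence $\kappa_{\rm a}J^s\ge 0$ and the lower limiter is inactive: $\max[\kappa_{\rm a}J^s,0]=\kappa_{\rm a}J^s$. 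Thus $\Sigma=\min\{\kappa_{\rm a}J^s,\kappa_{\rm a}B\}$.

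Next I would invoke $J^s<B$. For the claim to be non-vacuous one takes $\kappa_{\rm a}$ strictly positive — which is the situation described by $\kappa_{\rm a}=\mathcal O(\varepsilon)$ in a region that still absorbs, however weakly — and then multiplying $J^s<B$ by $\kappa_{\rm a}$ gives $\kappa_{\rm a}J^s<\kappa_{\rm a}B$, so the upper limiter is inactive as well and $\Sigma=\kappa_{\rm a}J^s$. In particular $\Sigma=\kappa_{\rm a}J^s<\kappa_{\rm a}B$, hence $\Sigma\ne\kappa_{\rm a}B$ and the IDSA does not select the free-streaming regime, which is the assertion. I would add the sharper observation that whenever $J^s>0$ one even has $\Sigma>0$, so the IDSA sits in its \emph{diffusion} regime — with an identically vanishing diffusion term — in a region that physically ought to be transparent; the positive source $\Sigma$ then enters the right-hand side of the trapped equation \eqref{eq:tr1}, which is exactly the spurious-trapped mechanism.

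The only step requiring care — and the closest thing to an obstacle — is the sign bookkeeping: one must state explicitly that $\kappa_{\rm a}>0$ and $J^s\ge 0$ so that both limiters are provably inactive, and that ``locally constant'' is used as $\partial_r J^t\equiv 0$ rather than as a mere bound on $J^t$, since in the latter case the divergence term need not vanish and the argument breaks down.
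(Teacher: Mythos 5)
Your proposal is correct and follows essentially the same route as the paper's proof: the local constancy of $J^t$ kills the diffusion term, reducing $\Sigma$ to $\min\{\max[\kappa_{\rm a}J^s,0],\kappa_{\rm a}B\}$, and then $\kappa_{\rm a}>0$ together with $0\le J^s<B$ forces $\Sigma=\kappa_{\rm a}J^s<\kappa_{\rm a}B$. Your explicit sign bookkeeping ($J^s\ge 0$ so the inner $\max$ is inactive) and the remark that $\Sigma>0$ places the IDSA in its diffusion regime are slightly more detailed than the paper's version, but the argument is the same.
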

\begin{proof}
The condition on $\kappa_{\rm a}$ is needed in order to have a
transparent region, for more details about this asymptotic limit, see
\cite{BeFrGaLiMiVa12big, MichaudPHD}. We now compute the value of the diffusion
source in this case.
\begin{equation}
\begin{aligned}
\Sigma &= \min\left \{
  \max\left[-\frac{1}{r^2}\partial_r\left(\frac{r^2}{3\kappa}\partial_r
      J^t\right)+\kappa_{\rm a}J^s ,0\right],\kappa_{\rm
    a}B\right\}\\
&=\min\left \{
  \max\left[\kappa_{\rm a}J^s ,0\right],\kappa_{\rm a}B\right\},
\end{aligned}
\end{equation}
where we used the fact that $J^t$ is locally constant.
We now observe that $\kappa_{\rm a}>0$ and that $J^s<B$ by hypothesis,
this leads to 
\begin{equation}
\Sigma = \kappa_{\rm a}J^s < \kappa_{\rm a}B.
\end{equation}
This concludes the proof.
\end{proof}
We now discuss the relevance of the hypotheses used in Proposition
\ref{prop:SpuriousTrapped}. As we have 
mentioned in the proof, the assumption on $\kappa_{\rm a}$ just
reflects the fact that we are in a transparent region. In such a
region, Eq. \eqref{eq:tr.free2} implies that the trapped distribution
vanishes. Finally, the condition
on $J^s$ is natural in radiative transfer problems. In fact, the
equilibrium distribution is mainly driven by emission and absorption
processes which are small in a transparent region. The transport
processes therefore tend to reduce the distribution which leads to
$J^s<B$. 

Now that we know that the hypotheses can be realized, we look at
the resulting dynamics of the trapped particles
\begin{equation}
\partial_t J^t = \kappa_{\rm a}(B-J^t-J^s).
\end{equation}
Because $B-J^s>0$, we can assume that there exists $B'$ such that
$B-J^s\geq B' >0$ and 
\begin{equation}
J^t > B'(1-{\rm e}^{-\kappa_{\rm a} t})
\end{equation}
that is, the trapped particle distribution is growing.

\begin{remark} Note that if $\kappa_{\rm a} = 0$ in the free streaming
  regime, a non-vanishing trapped distribution will not be eliminated
  because its evolution becomes
\begin{equation}
\partial_t J^t_{\rm free} = 0.
\end{equation}
and the distribution, whatever it is at first, is stationary.
\end{remark}

Now that we have seen that the spurious trapped problem may occur, we
show a numerical illustration of this problem. For the discretization,
we used the discretization proposed by Liebend\"orfer in
\cite{LiebendoerferEtAl09big} described in more detail in
\cite{BeFrGaLiMiVa12_esaimbig,MichaudPHD}. For the grid parameters, we choose
$N_r = 50$ and $\Delta t = 0.1$. For the parameters of the radiative
transfer equation, we choose $B=1$, $\kappa_{\rm s}=0$ and
\begin{equation}
\kappa_{\rm a}= \left \{
\begin{array}{ll}
1,& \text{if }r<R = 6,\\
\varepsilon = 10^{-3},&\text{if }r\geq R = 6.
\end{array}\right.
\end{equation}
\begin{figure}[h]
\centering
\includegraphics[width = .44\textwidth]{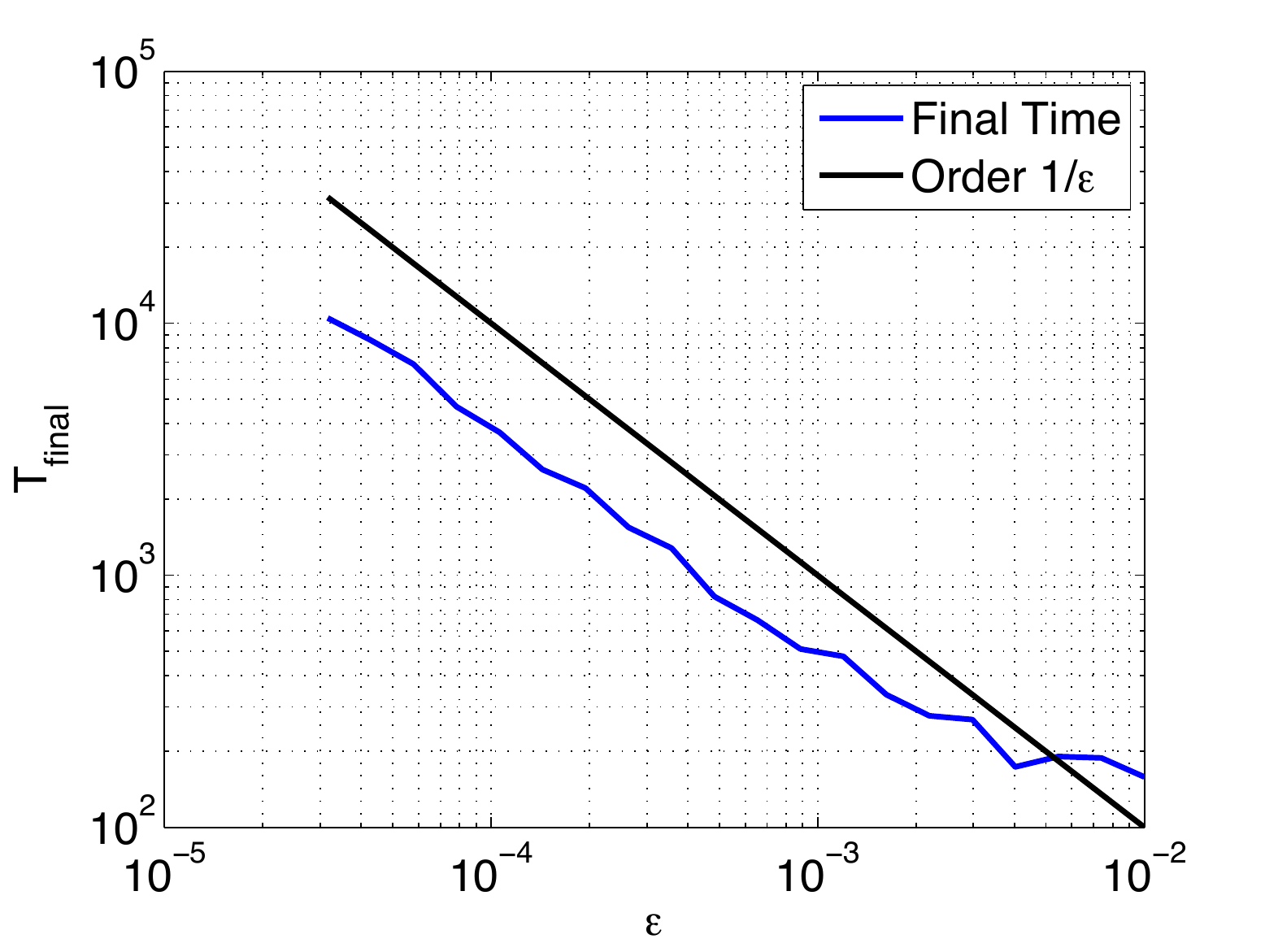}
\caption{Time needed to reach a stationary distribution dominated by
  spurious trapped particles with respect to $\varepsilon$.}
\label{fig:SpTrappedOrder}
\end{figure} 

The results are displayed in Figure~\ref{fig:SpuriousTrapped}. In this
Figure, we also show the proportions $h^t=\frac{J^t}{J^t+J^s}$ and
$h^s=\frac{J^s}{J^t+J^s}$ of trapped and streaming particles on the
right panels. This illustrates the fact that the trapped component is
growing. This problem develops on a slow timescale, and becomes visible
only on a long timescale. When $\varepsilon$ goes to zero, the time needed
to create spurious trapped particles tends to infinity and this problem
disappears. In Figure \ref{fig:SpTrappedOrder}, we display the time
needed to reach a stationary distribution dominated by spurious
trapped particles in the streaming region. The results show that the
time needed to reach a stationary state grows roughly with an order of
$\mathcal O(\varepsilon^{-1})$. Numerically, a linear regression on
the results presented in Figure \ref{fig:SpTrappedOrder}, where we
excluded the $5$ first points because for too big $\varepsilon$ the
free streaming limit is no longer a good approximation,
gives an order of $\mathcal O(\varepsilon^{-0.90})$. It is reasonable
to think that inside a CCSN, the time interval over which the solution
is computed is too short for this problem to develop.

\subsubsection{Instability of the coupling}

\begin{figure}[h]
\centering

\subfigure{\label{Fig:I-4-InDevE.a}\includegraphics[width=.42\textwidth]{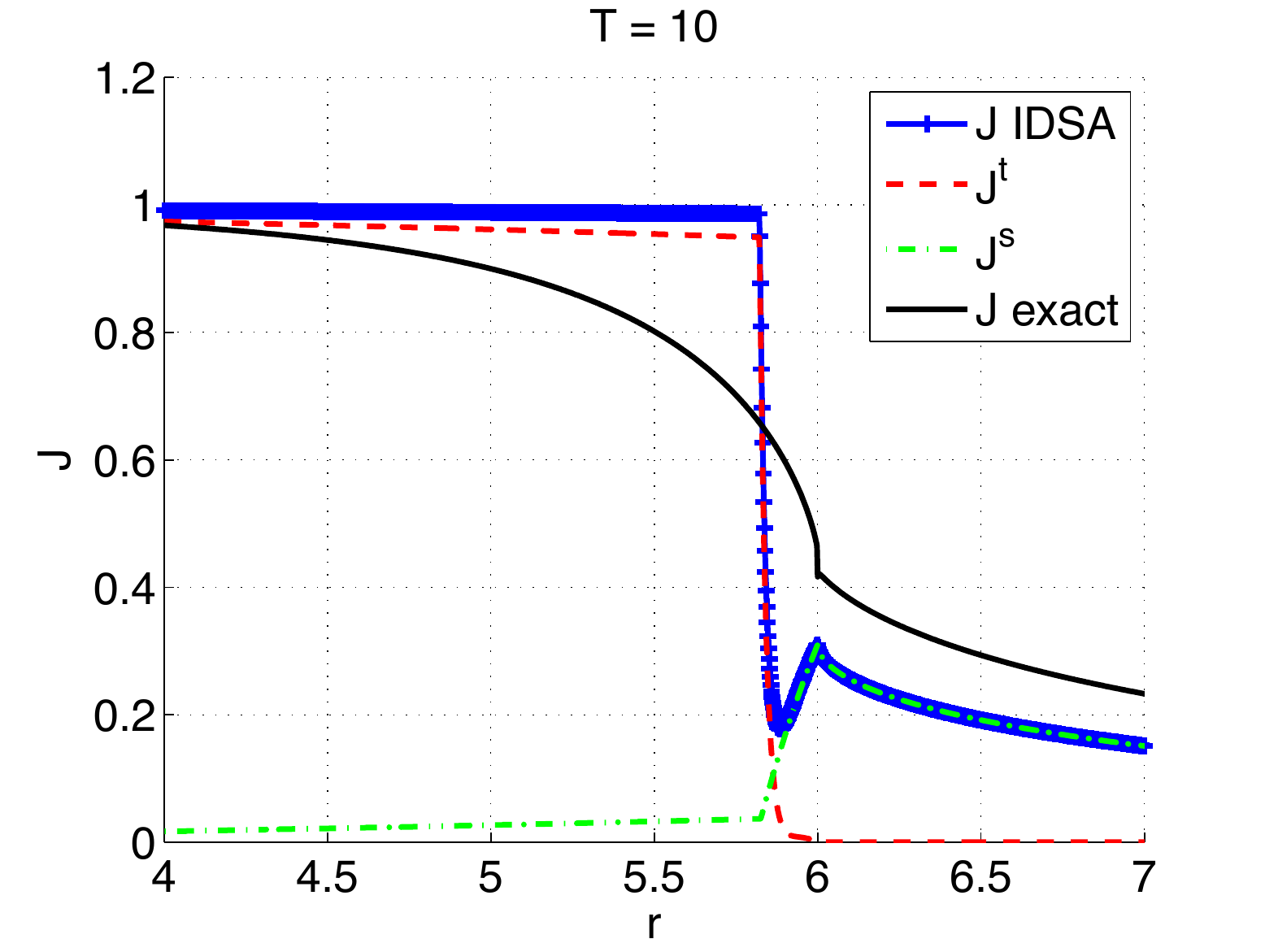}}
\quad
\subfigure{\label{Fig:I-4-InDevE.b}\includegraphics[width=.42\textwidth]{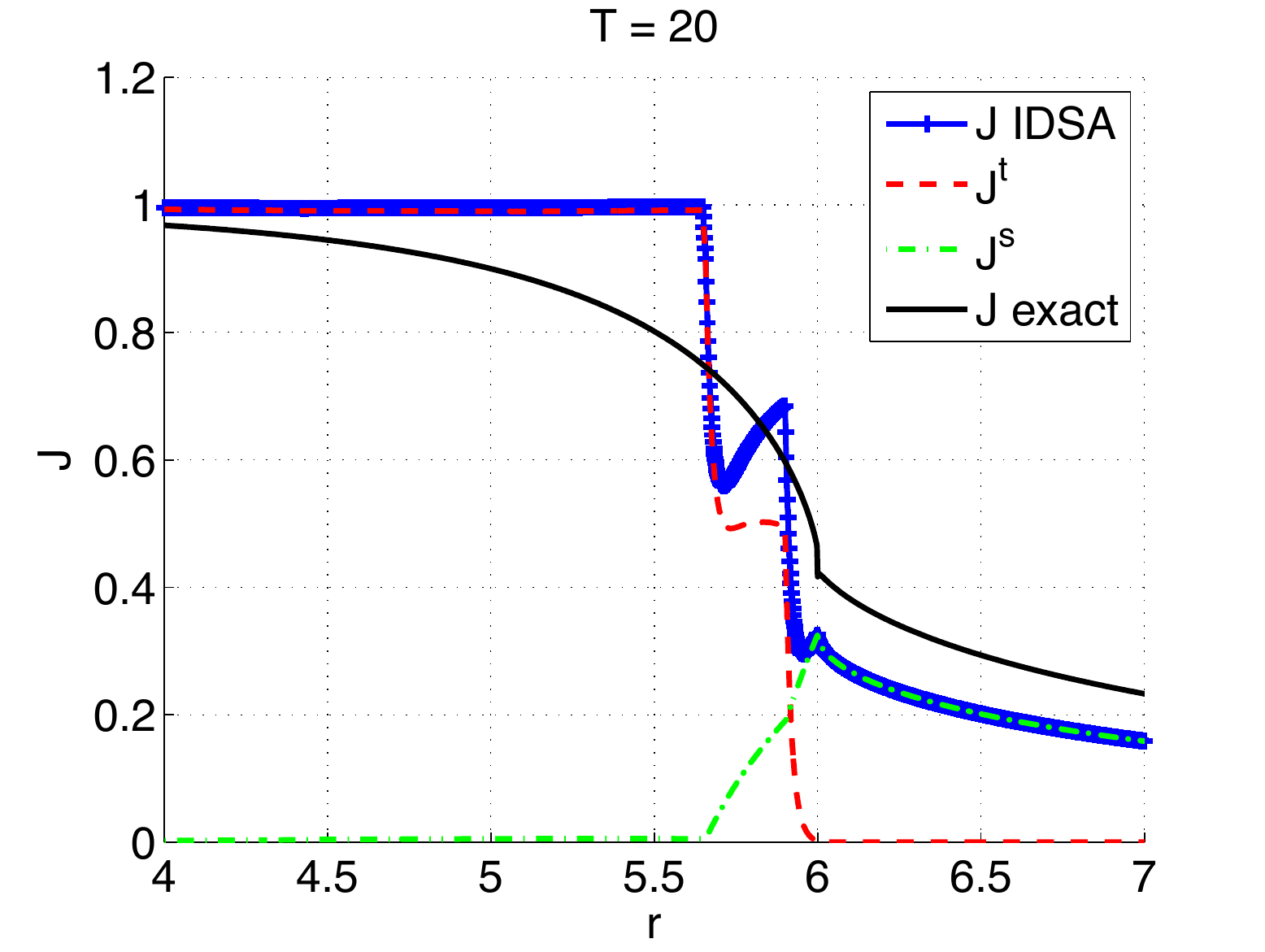}}
\hfill

\subfigure{\label{Fig:I-4-InDevE.c}\includegraphics[width=.42\textwidth]{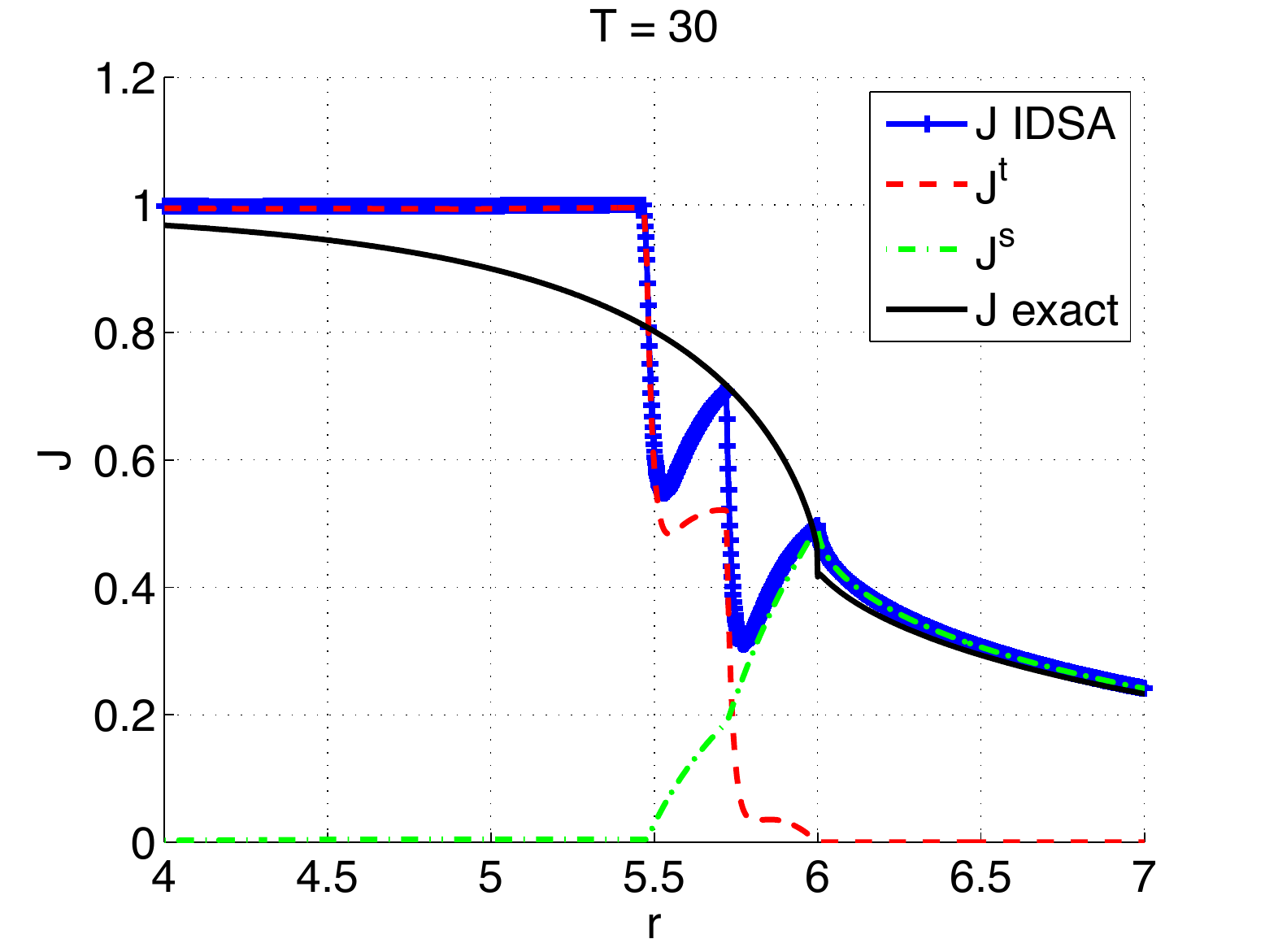}}
\quad
\subfigure{\label{Fig:I-4-InDevE.d}\includegraphics[width=.42\textwidth]{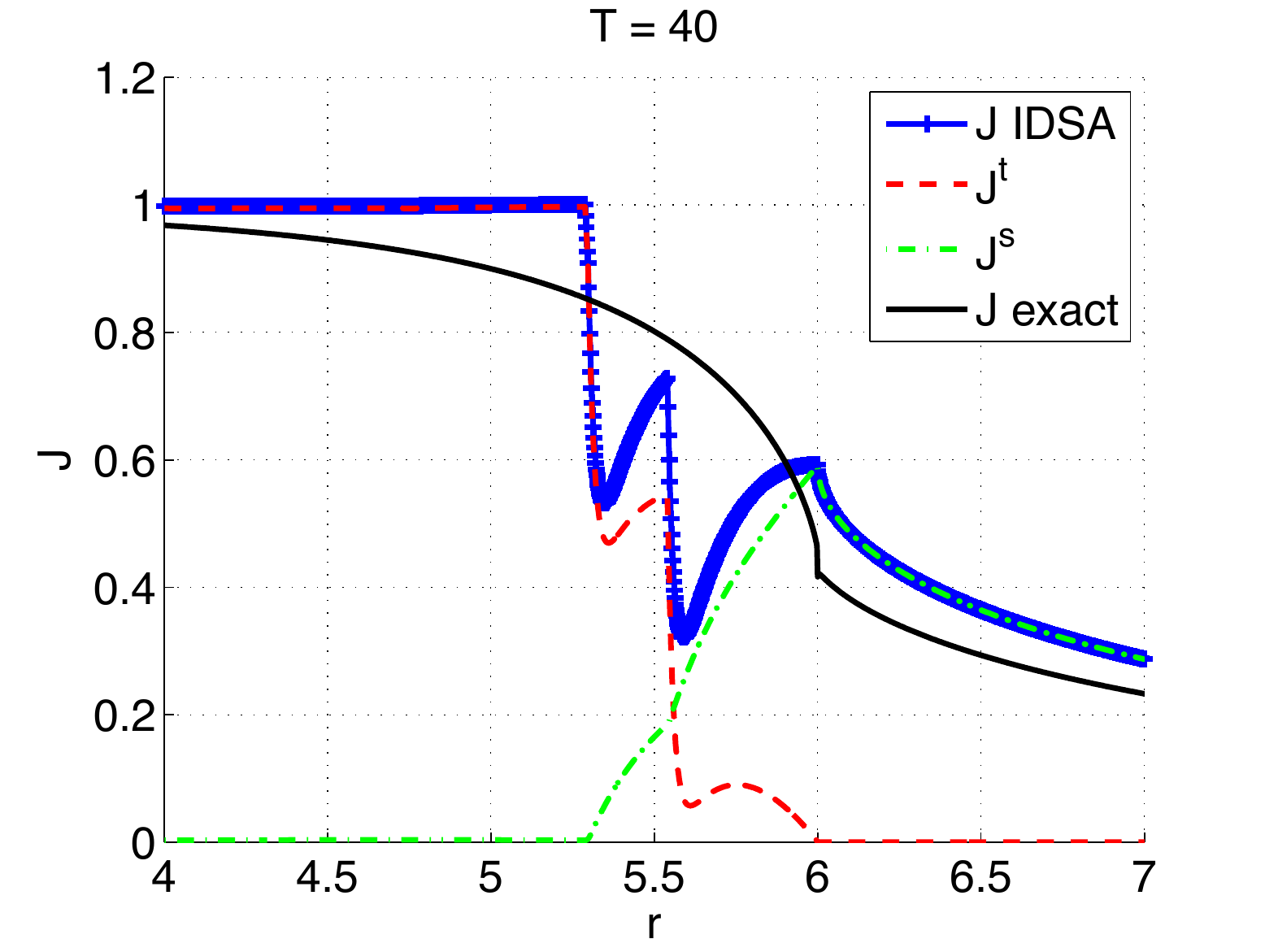}}
\hfill

\subfigure{\label{Fig:I-4-InDevE.e}\includegraphics[width=.42\textwidth]{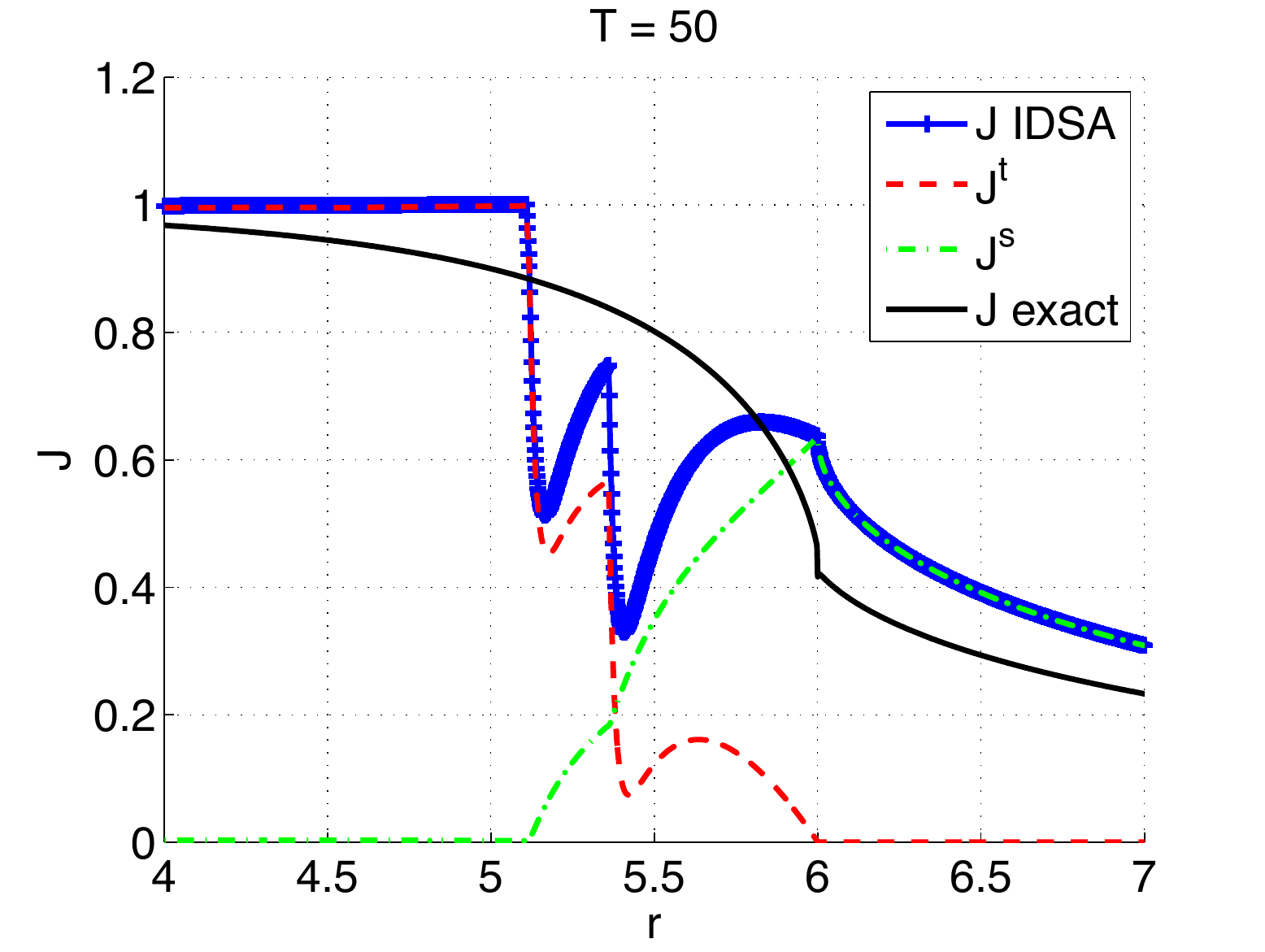}}
\quad
\subfigure{\label{Fig:I-4-InDevE.f}\includegraphics[width=.42\textwidth]{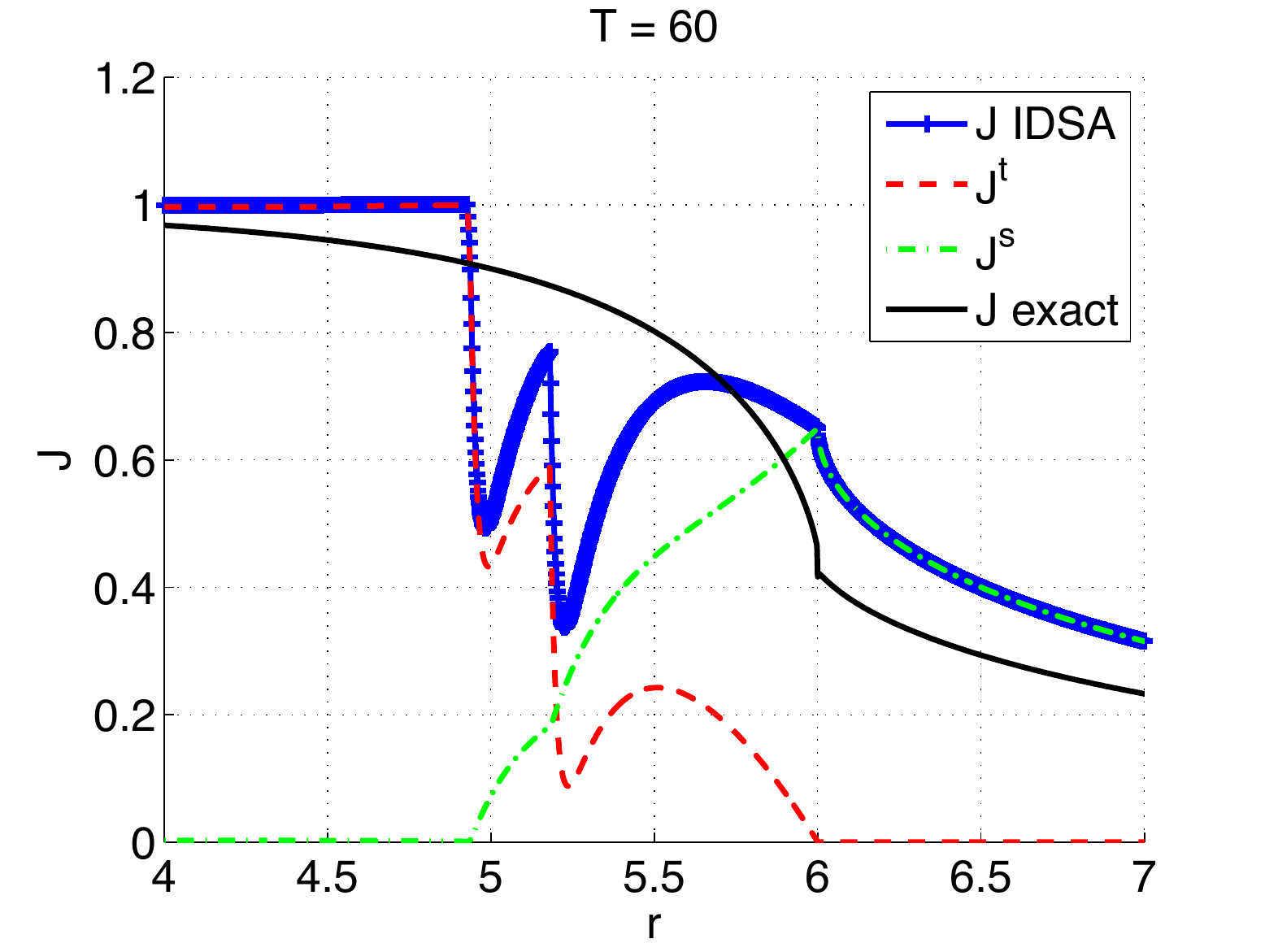}}
\hfill

\subfigure{\label{Fig:I-4-InDevE.g}\includegraphics[width=.4\textwidth]{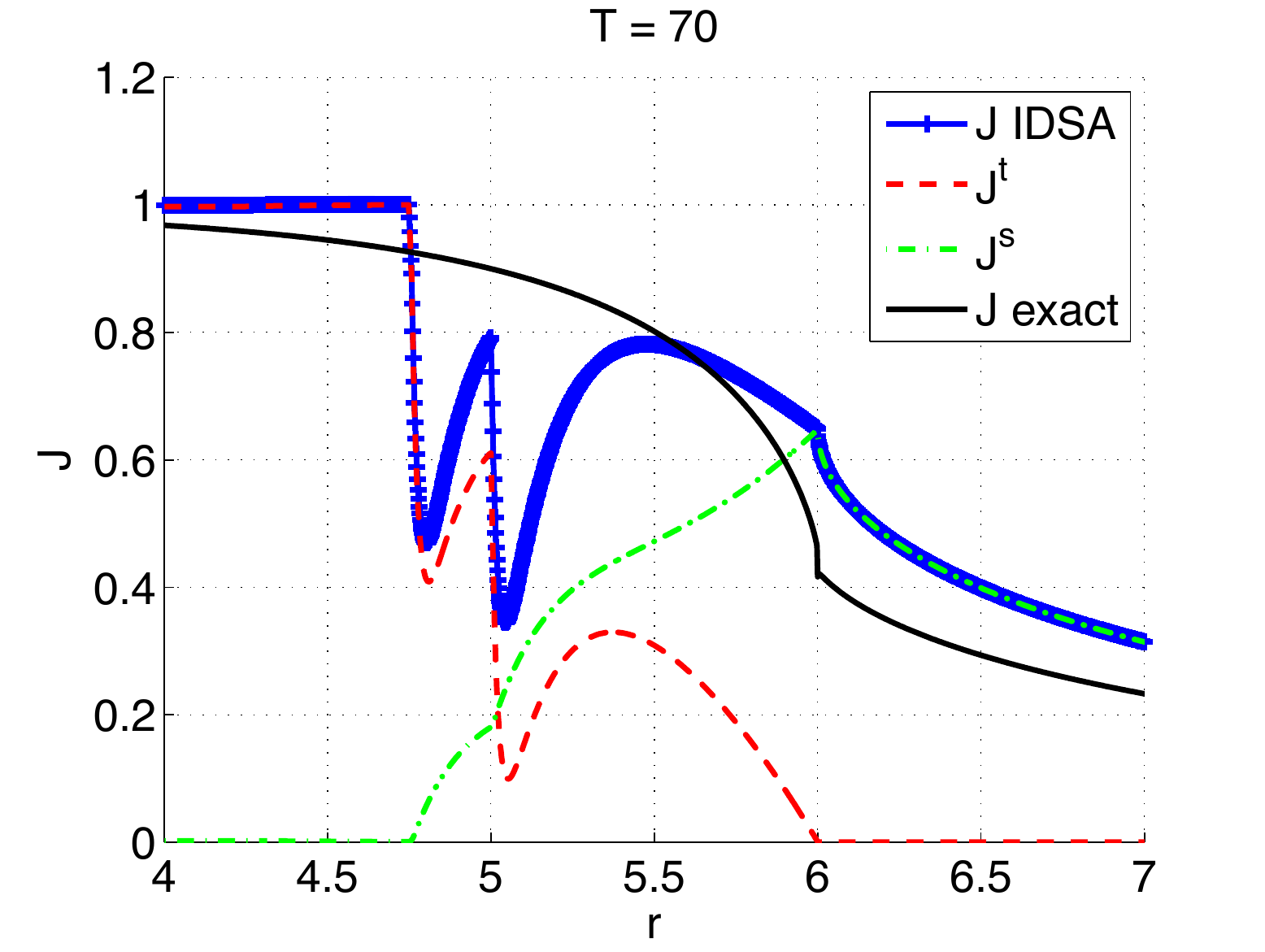}}
\quad
\subfigure{\label{Fig:I-4-InDevE.h}\includegraphics[width=.4\textwidth]{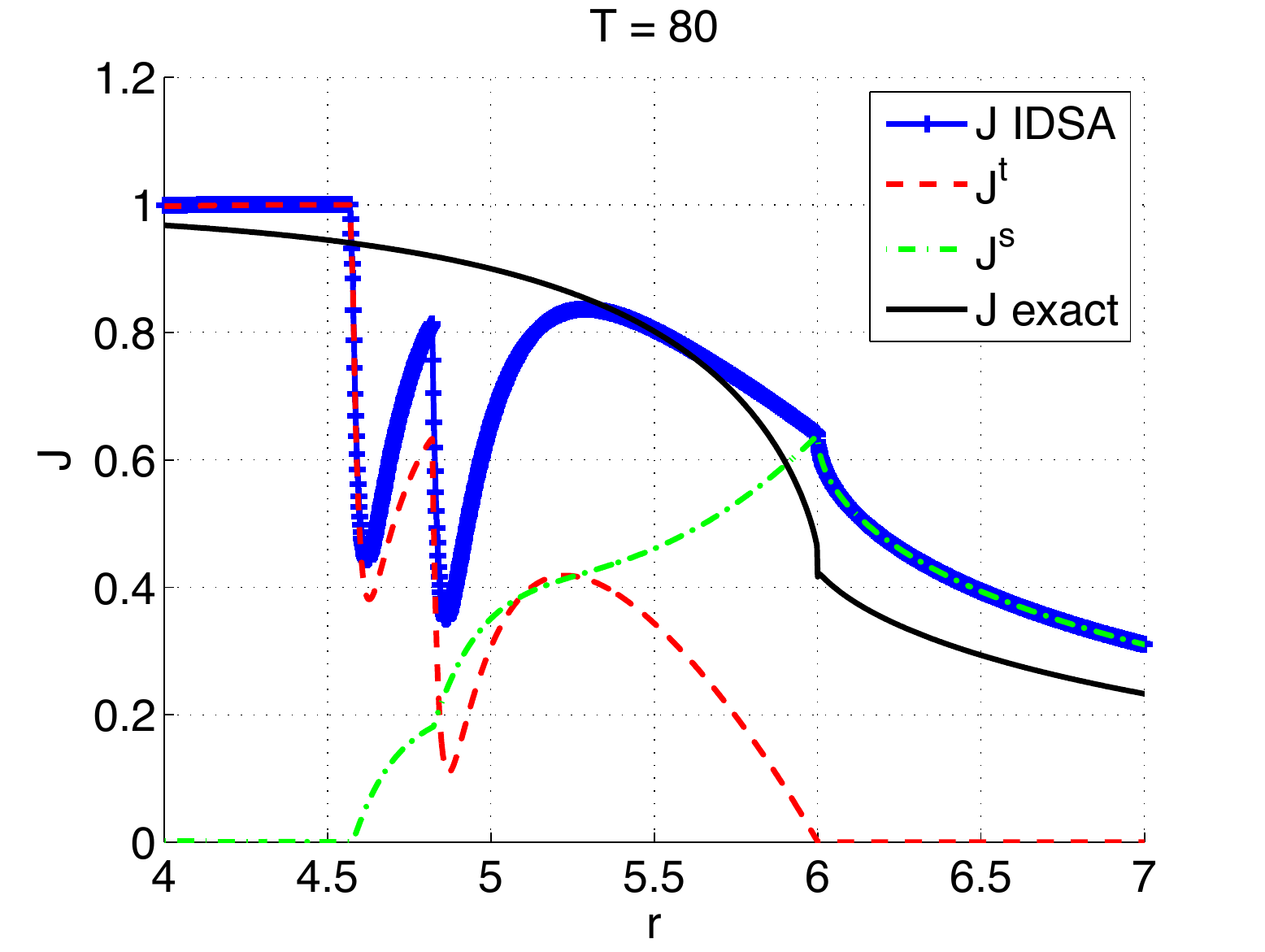}}
\hfill

\caption{Development of the instability. These results have been obtained
with an homogeneous sphere example defined by $\kappa=
\mathds{1}_{r<6}$. The figure illustrates the formation of the instability
and its inward propagation.}
\label{fig:InstabilityEarly}
\end{figure}

\begin{figure}[h]
\centering

\subfigure{\label{Fig:I-4-InDevL.a}\includegraphics[width=.42\textwidth]{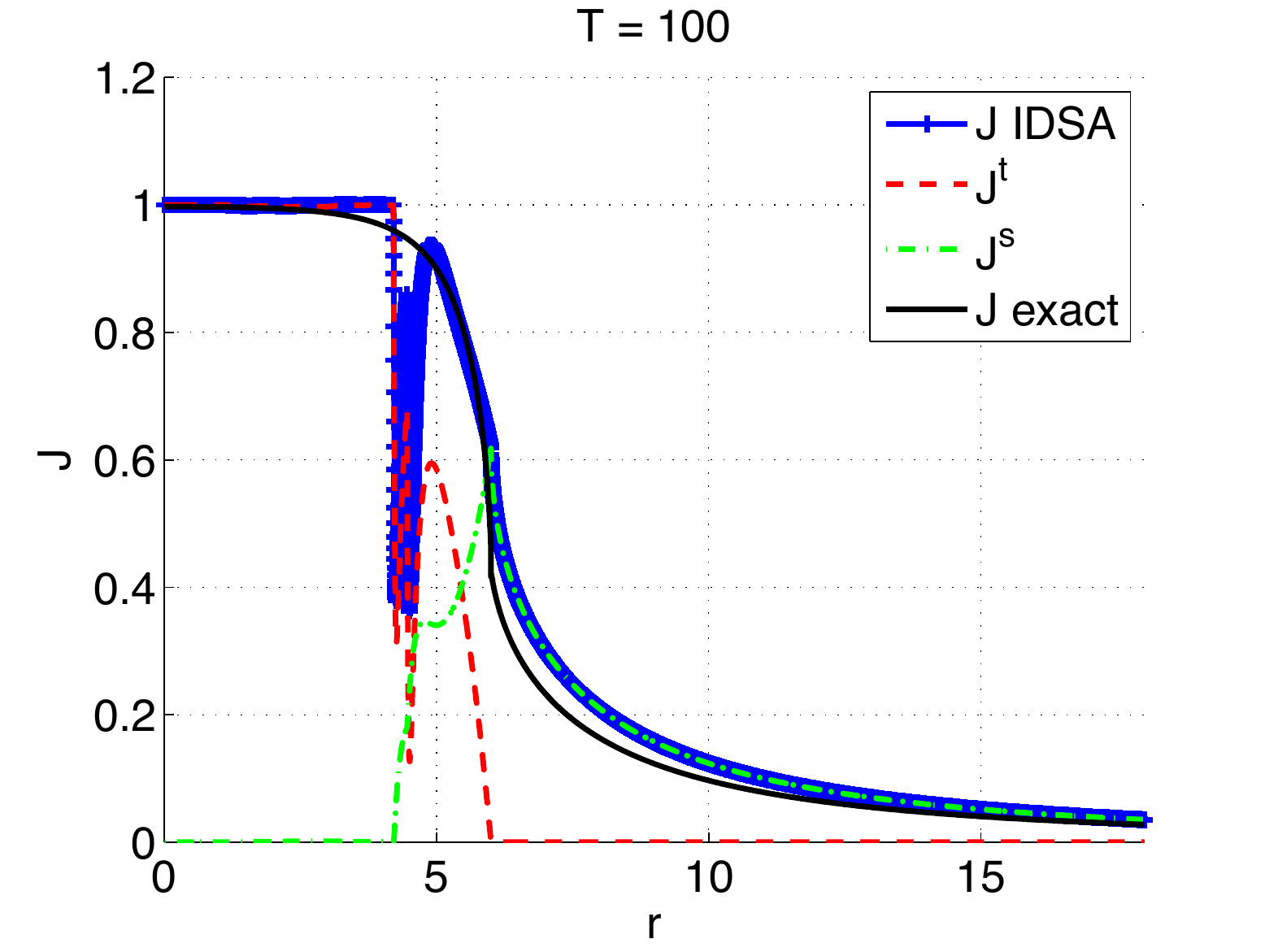}}
\quad
\subfigure{\label{Fig:I-4-InDevL.b}\includegraphics[width=.42\textwidth]{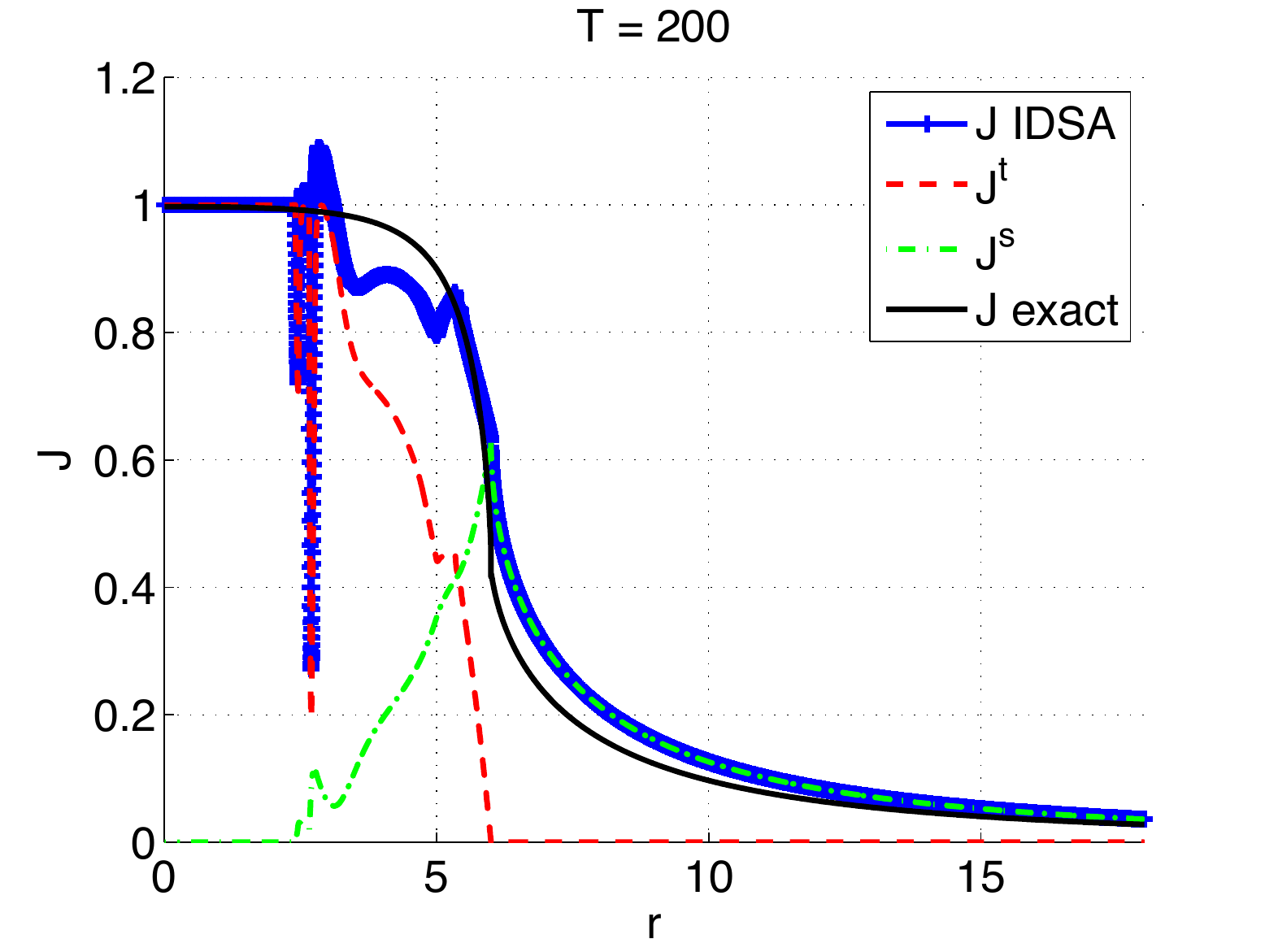}}
\hfill

\subfigure{\label{Fig:I-4-InDevL.c}\includegraphics[width=.42\textwidth]{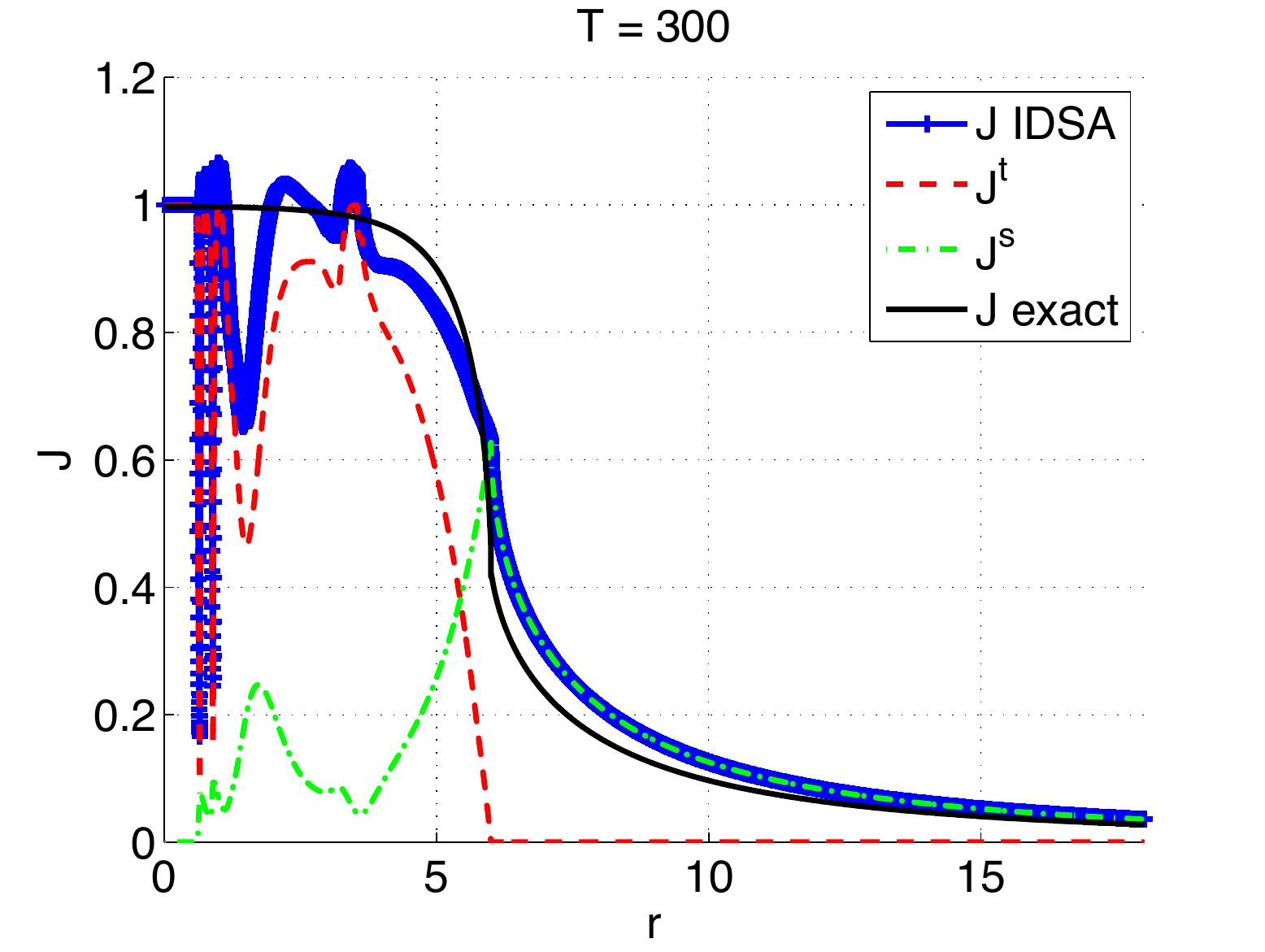}}
\quad
\subfigure{\label{Fig:I-4-InDevL.d}\includegraphics[width=.42\textwidth]{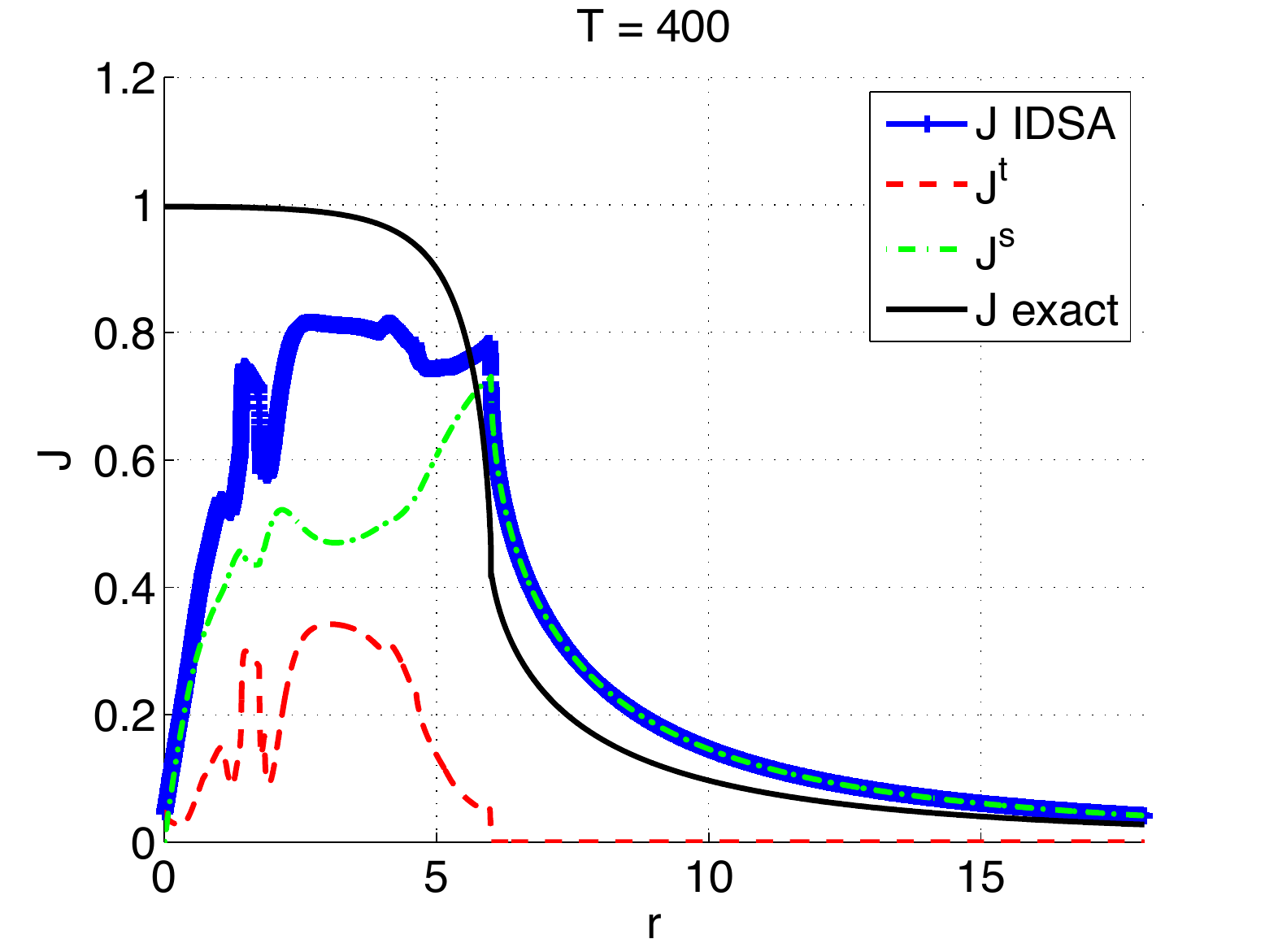}}
\hfill

\subfigure{\label{Fig:I-4-InDevL.e}\includegraphics[width=.42\textwidth]{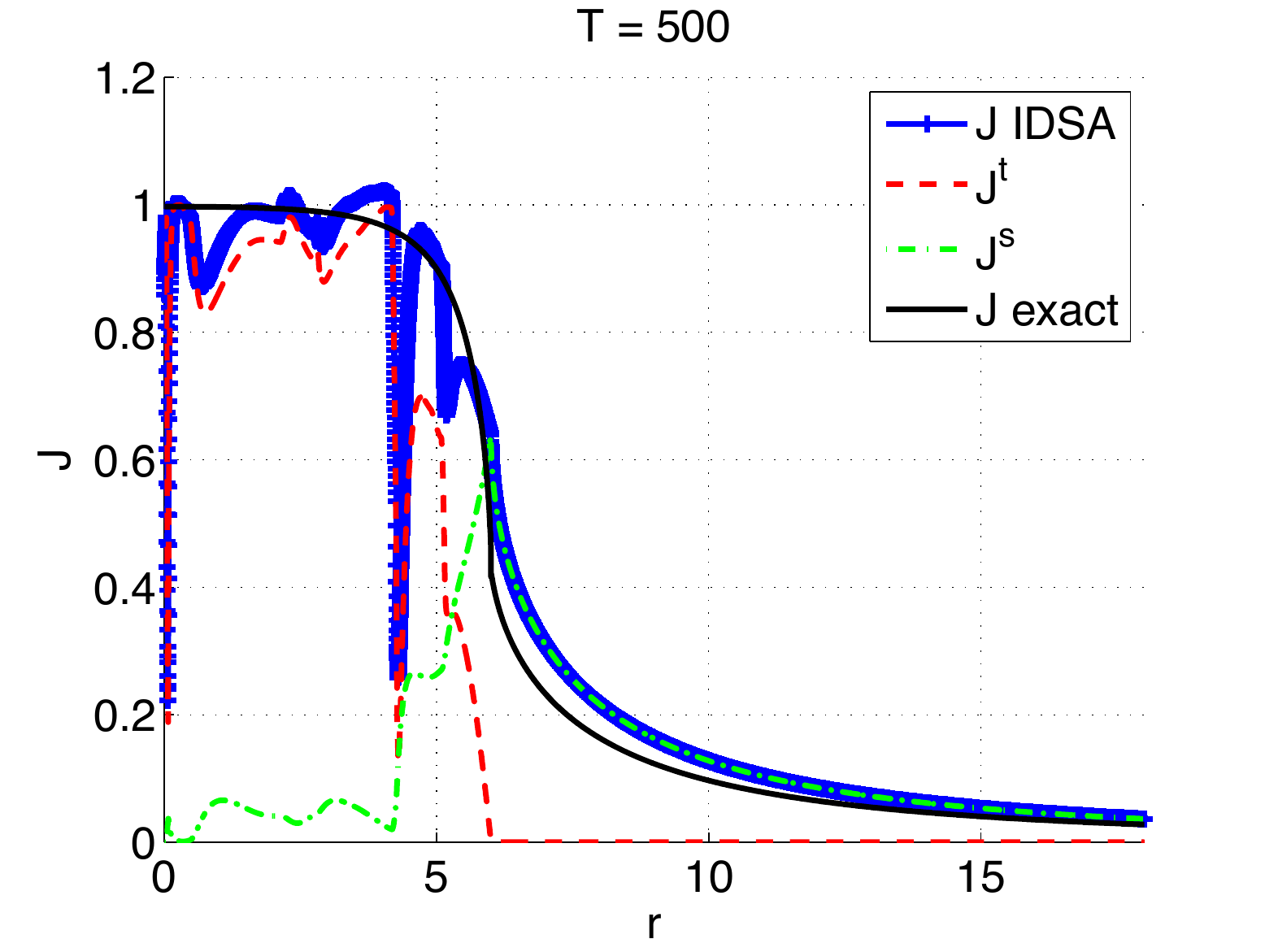}}
\quad
\subfigure{\label{Fig:I-4-InDevL.f}\includegraphics[width=.42\textwidth]{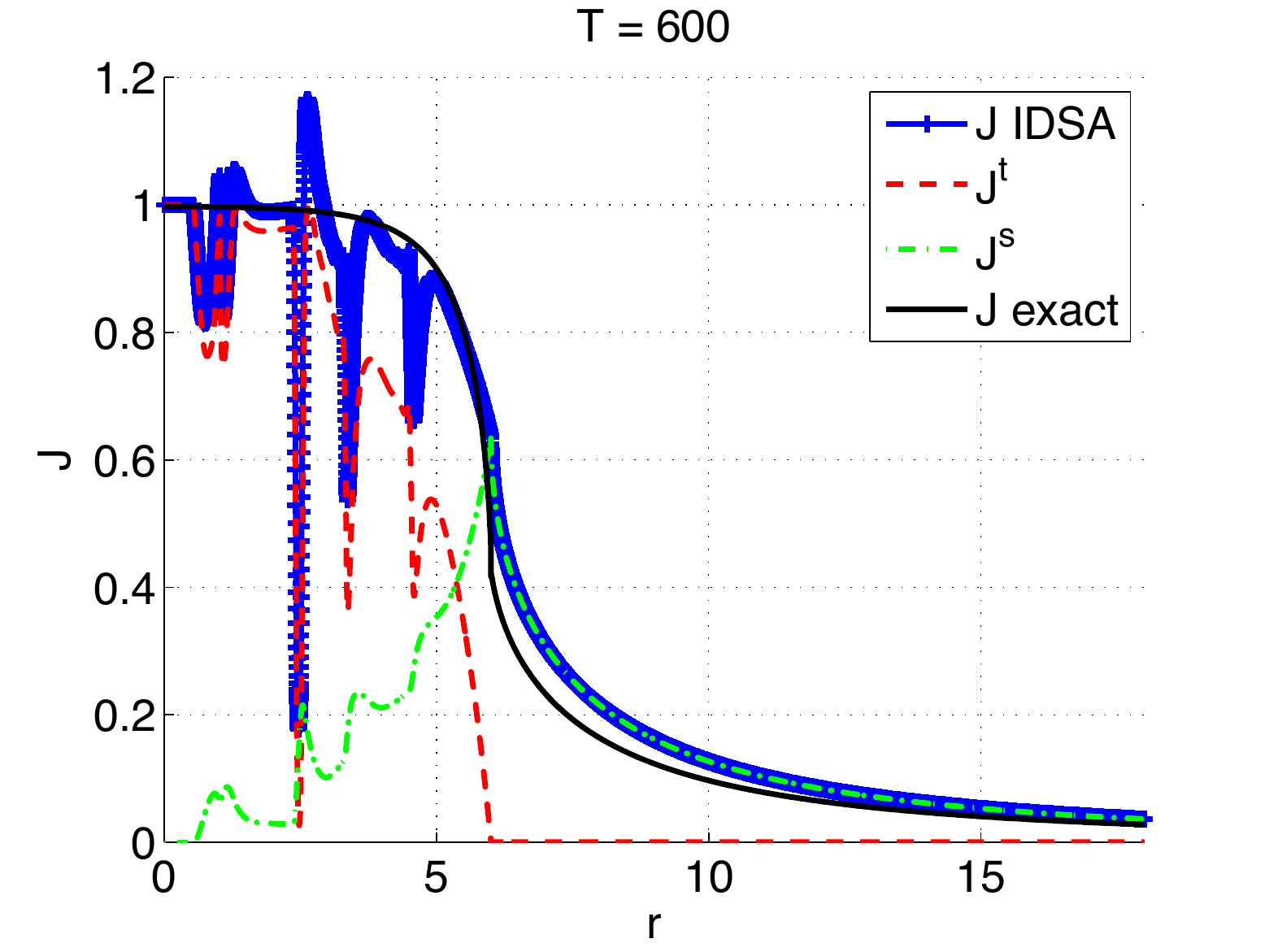}}
\hfill

\subfigure{\label{Fig:I-4-InDevL.g}\includegraphics[width=.42\textwidth]{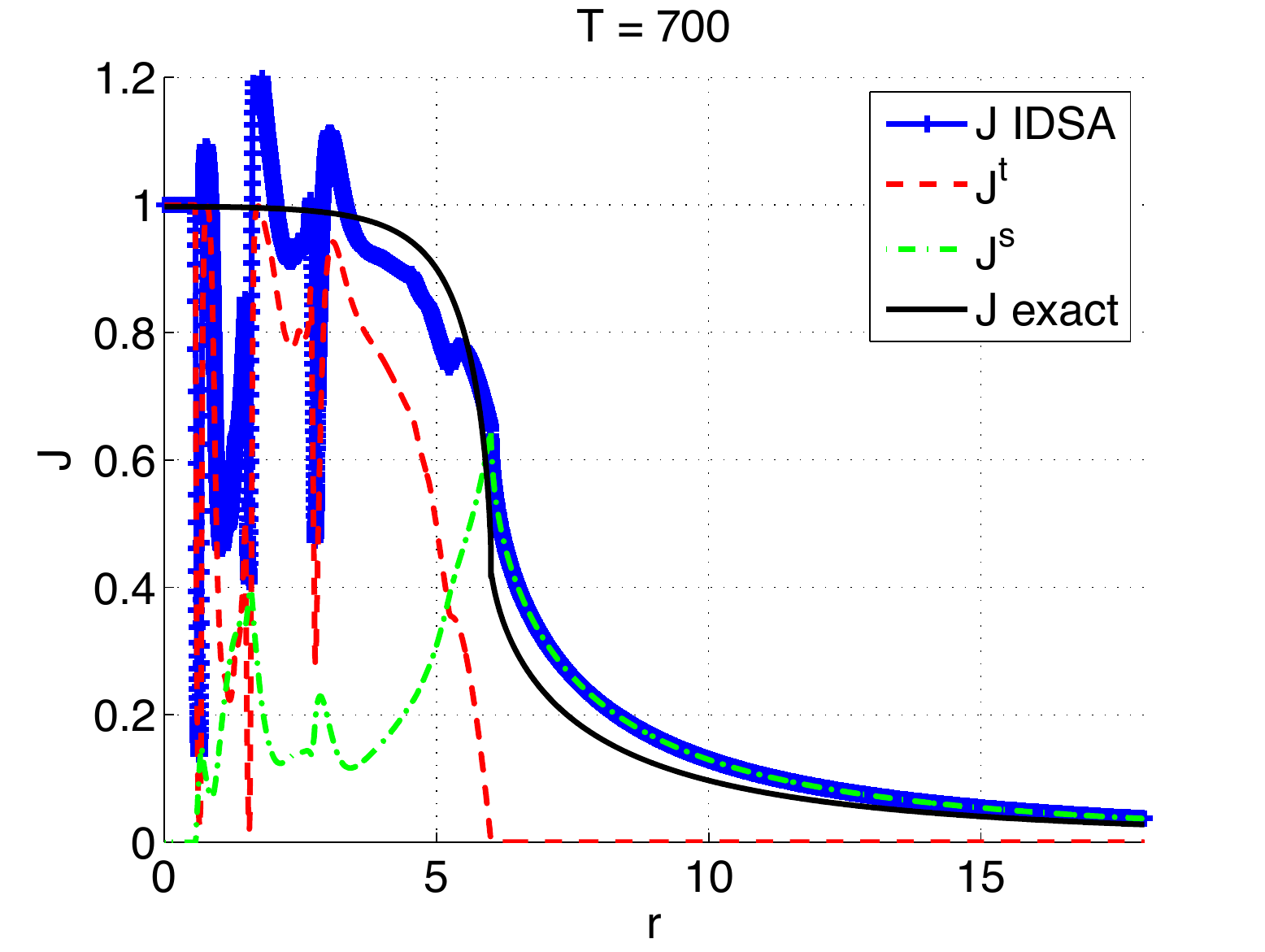}}
\quad
\subfigure{\label{Fig:I-4-InDevL.h}\includegraphics[width=.42\textwidth]{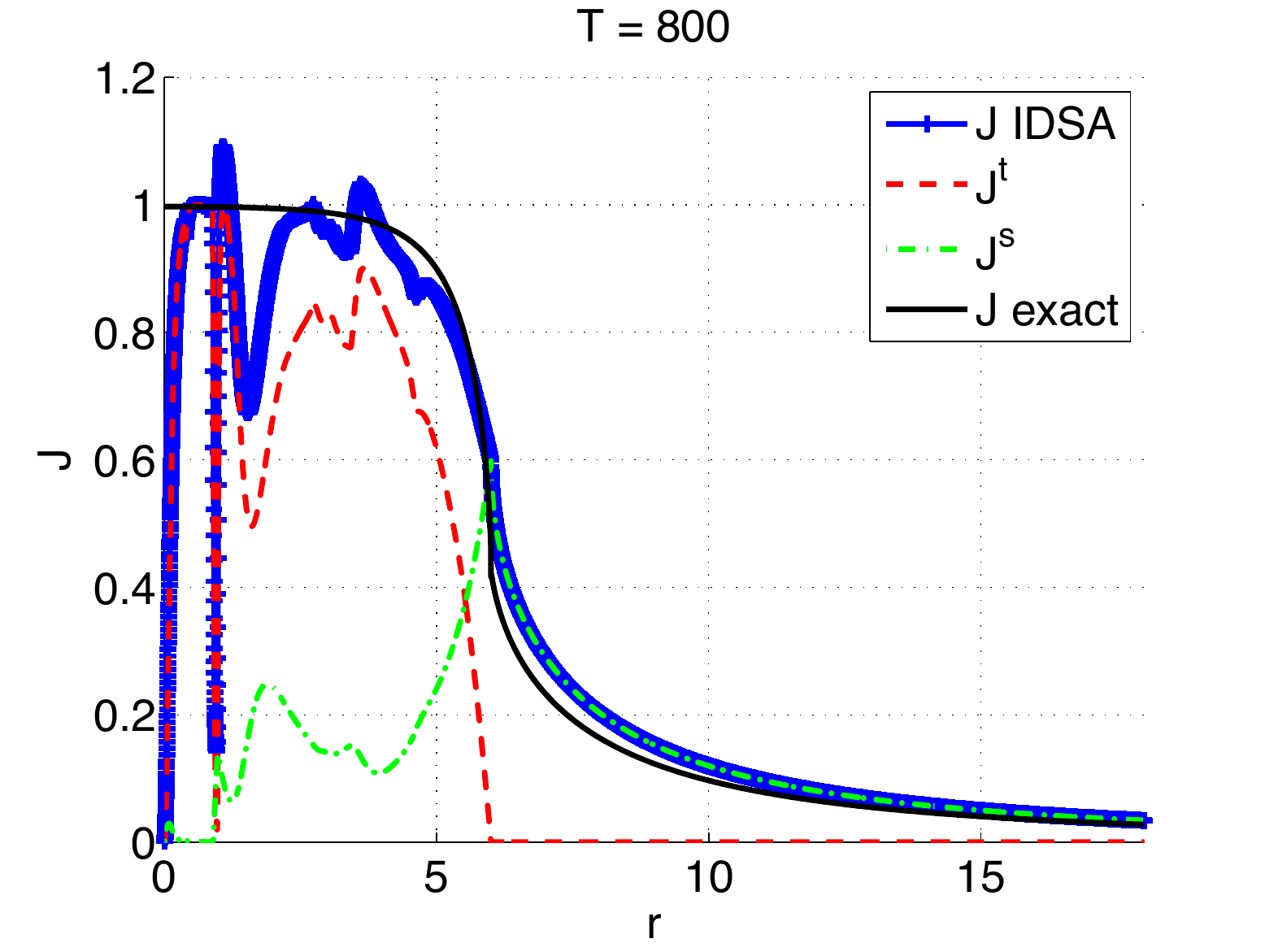}}
\hfill

\caption{Long time behavior of the instability. These results have been obtained
with a homogeneous sphere example defined by $\kappa =
\mathds{1}_{r<6}$. The figure illustrates the long time behavior of the solution.}
\label{fig:InstabilityLate}
\end{figure}
The second problem we discuss is also linked with the diffusion source
$\Sigma$. It takes the form of an instability that develops at a
boundary between an opaque and a transparent region. It originates in
the fact that the diffusion term in $\Sigma$ can become large for two
reasons: the first is because $\kappa$ is small and the domain is
transparent, and this triggers a correct transition to the streaming
limit; the second is because the concavity becomes large. At
the boundary between a very opaque and a transparent region, the
second case may occur leading to a switch to the streaming regime in a
region 
where the opacity $\kappa$ is large. That is, where the diffusion
limit is valid and the diffusion regime of the IDSA should apply. As a
consequence, the IDSA virtual boundary between the opaque and
the transparent domains is shifted
inward. This shift is helped by the creation of streaming particles
that feed back into the dynamics of the trapped particles in the form of
an inward advection term, see Eq. \eqref{eq:tr.with.adv}. At some
point, the diffusion limit is recovered near the real 
boundary and starts to grow again as expected, but at the virtual boundary
of the IDSA, the trapped distribution continues to locally decrease leading
to a locally radially growing distribution of trapped particles which
is not physical, see Remark~\ref{rem:2}. As an effect, an instability
of the underlying model
is created and propagated inward.

As a numerical illustration of such an instability, we apply the IDSA
to the homogeneous sphere test case defined by $\kappa_{\rm s}(r) = 0$,
$\kappa_{\rm a}(r) =\mathds{1}_{r<6} $ and $B=1$. For the grid
parameters, we set $N_r=10^4$ and $\Delta t = 0.1$.

Figure \ref{fig:InstabilityEarly} displays the formation of the
instability near the boundary of the homogeneous sphere. We see for
example in the first subfigure that the virtual boundary of
the IDSA, where the trapped distribution vanishes, stands at a radius
$r<R$. This illustrates the shift of the interface discussed above. The
next subfigures show the early developement of the instability and
its inward transport due to the feedback of the streaming component
into the trapped component dynamics, see Eq.~\eqref{eq:tr.with.adv}.

The behavior of the solution on longer times is displayed in Figure
\ref{fig:InstabilityLate}. This figure shows the evolution of the
instability. The solution becomes more complicated as time
passes. Note that the solution remains bounded and never explodes like a
usual nu\-me\-ri\-cal instability due to numerical problems such as a
CFL condition. This instability in the 
solution comes from the modeling and the definition of the coupling
term $\Sigma$. 
\begin{remark}\label{rem:4} The instability is linked with the size of the
  diffusion term. Note that this quantity also depends on the
  discretization as the second derivative will be of the order of
  $\mathcal O((\Delta r)^{-2})$. This means that if the discretization
  is coarse enough, then the instability does not occur. This explains
  the fact that we used a coarse grid in the spurious trapped
  numerical experiment.
 In fact, if we
  refine the grid, the spurious trapped numerical experiment will also
  show the numerical instability that we just described.
\end{remark}

We have shown with these two examples that the coupling in the IDSA
equations can lead to wrong solutions that are due to the application of a
wrong asymptotic regime in parts of the domain. This can lead to the
creation of spurious trapped particles or to the creation of an
instability in the solution. We have also discussed the influence of the
advection term coming from the coupling with streaming particles in
the trapped diffusion equation. It seems that this term helps the
developement of the instability.

A natural question to ask is: why did the IDSA work in the case of the
CCSN models performed in \cite{LiebendoerferEtAl09big}? We answer this
question below.

\subsection{Why does the IDSA work in practice?}
As we mentioned in Section \ref{sec:Intro}, in the numerical tests
performed by Liebend\"orfer et al. in \cite{LiebendoerferEtAl09big},
the IDSA is quite accurate and good enough to be used in
simulations. We just discussed two important mathematical issues of this
approximation. How is it possible that the IDSA works in practice when
it fails on simple examples? Part of the answer has been given in
Remark \ref{rem:4}. If the discretization is coarse enough, the
instability can not develop. In the application of the IDSA to the
CCSN modeling, the number of grid points is $N_r=1000$ and the
repartition of points is such that the instability of the underlying
model does not
develop. However, it is probable that a refinement of the radial grid will
lead to the apparition of an instability in the IDSA. 
Note that in a real CCSN simulation, the reactions in the center become very large and as a result, the instability is quickly damped. In our example for the instability, we used an artificially small opacity $\kappa = 1$. For larger opacities, the instability is damped and vanishes in the limit of the infinitely opaque homogeneous sphere.
Another possible
explanation 
is that in the CCSN numerical experiments, the radiative transfer equation is
coupled to the background matter. This coupling is done through the
trapped particles and their distribution is reconstructed at every
time step with a smoothing effect that may help to get rid of the
instability.
Regarding the spurious trapped particles, we have observed that the
growing of the spurious trapped particles component is slow and that
it takes a long time to dominate the dynamics, if ever. In fact, in a
CCSN simulation, the time window on which the neutrino radiative
transfer needs to be computed is short and the spurious trapped
particles do not contribute significantly to the dynamics.

We now study in full detail the case of the homogeneous sphere where
an ana\-ly\-ti\-cal solution is known. This idealized example will shed
light on the coupling mechanism and will help us to partly correct or avoid
the problems we have des\-cribed.

\section{The case of the stationary homogeneous sphere}\label{sec:HS}
In this example we want
to solve the monochromatic equation \eqref{eq:RadTransp} with
$\kappa_{\rm s}=0$, that is
\begin{equation}\label{RadTransp}
\partial_t f + \mu \partial_rf+\frac{1-\mu^2}{r}\partial_\mu f=
\kappa_{\rm a}(B-f).
\end{equation}
We assume that the equilibrium distribution $B$ is constant. The
homogeneous sphere setting is completed by requiring a specific form
for $\kappa_{\rm a}$. We set 
\begin{equation}\label{eq:kappaHS}
\kappa_{\rm a} = \kappa \mathds{1}_{r<R}\,,
\end{equation}
where $\kappa$ is a constant and $\mathds{1}$ is the characteristic
function.
This example has been chosen for two main reasons. The first is
because it has a closed form solution that is for example
derived in Chapter 2 of Duderstadt and Martin \cite{Duderstadt79}. The
second reason is that this example naturally has two different regions
where the limit dynamics correspond to the limits used in the
derivation of the IDSA. It is therefore a very good and natural case
study for the IDSA.

 The steady state radiative
transfer equation
\eqref{RadTransp} has an analytical solution given by
\begin{equation}\label{eq:AnaSolHS}
f(r,\mu) = B(1-{\rm e}^{-\kappa s(r,\mu)}),
\end{equation} 
where 
\begin{equation}
s(r,\mu) = \left\{
\begin{array}{llr}\displaystyle
r\mu + RG(r,\mu)&r<R,&-1<\mu<1,\\
2RG(r,\mu)&r\geq
R,&\displaystyle\left[1-\left(\frac{R}{r}\right)^2\right]^{1/2}<\mu<1,
\end{array}
\right.
\end{equation}
and 
\begin{equation}
G(r,\mu):= \left[1-\left(\frac{r}{R}\right)^2(1-\mu^2)\right]^{1/2}.
\end{equation}
This solution is easily obtained from the formal solution of the
transport equation; see e.g. \cite{Duderstadt79}.

The first three angular moments of the analytical exact solution are
given by
\begin{equation}\label{eq:JHom}
J(r) = \left\{
\begin{array}{lr}
\displaystyle B\left[1-\int_0^1d\mu \cosh(\kappa r \mu){\rm e}^{-\kappa R
    G(r,\mu)}\right]&r<R,\\\\
\displaystyle \frac{B}{2}\left[1-\sqrt{1-\left(\frac{R}{r}\right)^2}-\int_{\sqrt{1-(\frac{R}{r})^2}}^1
  d\mu {\rm e}^{-2\kappa R
    G(r,\mu)}\right]&r\geq R,
\end{array}
\right.
\end{equation}
and
\begin{equation}\label{eq:HHom}
H(r) = \left\{
\begin{array}{lr}
\displaystyle B\int_0^1d\mu \mu\sinh(\kappa r \mu){\rm e}^{-\kappa R
    G(r,\mu)}&r<R,\\\\
\displaystyle \frac{B}{2}\left[\frac{1}{2}\left(\frac{R}{r}\right)^2-\int_{\sqrt{1-(\frac{R}{r})^2}}^1
  d\mu\, \mu{\rm e}^{-2\kappa R
    G(r,\mu)}\right]&r\geq R,
\end{array}
\right.
\end{equation}
and 
\begin{equation}\label{eq:KHom}
K(r) = \left\{
\begin{array}{lr}
\displaystyle B\left[\frac{1}{3}-\int_0^1d\mu \mu^2 \cosh(\kappa r \mu){\rm e}^{-\kappa R
    G(r,\mu)}\right]&r<R,\\\\
\displaystyle \frac{B}{6}\left[1-\left(1-\left(\frac{R}{r}\right)^2\right)^{3/2}\!\!\!\!\!-3\int_{\sqrt{1-(\frac{R}{r})^2}}^1
  d\mu \mu^2 {\rm e}^{-2\kappa R
    G(r,\mu)}\right]\!\!&r\geq R,
\end{array}
\right.
\end{equation}
and can be computed by direct integration from Equation
\eqref{eq:AnaSolHS} using the parity of the function $G(r, \mu)$ with
respect to $\mu$.

We now give some particular values of the analytical solutions that
will be useful for the analysis of the IDSA.
\begin{proposition}\label{prop:JH} The values of $J$ and $H$ at $r=0$
  and at $r=R$ are
\begin{eqnarray}
J(0) &=& B(1-{\rm e}^{-\kappa R}),\label{eq:J0HR}\\
J(R)&=& \frac{B}{2}\left[1 + \frac{{\rm e}^{-2\kappa R}-1}{2\kappa
    R}\right],\\
H(0) &=&0,\\
H(R) &=& \frac{B}{2}\left[\frac{1}{2}+ {\rm e}^{-2\kappa
    R}\left(\frac{1}{2\kappa R}+\frac{1}{(2\kappa
      R)^2}\right)-\frac{1}{(2\kappa R)^2}\right].
\end{eqnarray}
\end{proposition}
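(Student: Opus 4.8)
The plan is to obtain all four values by directly evaluating the integral representations \eqref{eq:JHom} and \eqref{eq:HHom} at the two special radii, exploiting the fact that the auxiliary function $G(r,\mu)$ degenerates there so that the remaining $\mu$-integrals become elementary.

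First I would handle $r=0$ using the $r<R$ branches. Since $G(0,\mu)=[1-0]^{1/2}=1$, $\cosh(\kappa\cdot 0\cdot\mu)=1$ and $\sinh(\kappa\cdot 0\cdot\mu)=0$, the $\mu$-integral in \eqref{eq:JHom} collapses to $\int_0^1 e^{-\kappa R}\,d\mu = e^{-\kappa R}$, which gives $J(0)=B(1-e^{-\kappa R})$, while the integrand in \eqref{eq:HHom} vanishes identically, so $H(0)=0$.

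Next I would handle $r=R$. The key simplification is $G(R,\mu)=[1-(1-\mu^2)]^{1/2}=|\mu|=\mu$ for $\mu\in[0,1]$, together with $\sqrt{1-(R/r)^2}=0$, so in the $r\geq R$ branches the lower integration limit is $0$. Then \eqref{eq:JHom} reduces to $\tfrac{B}{2}\bigl(1-\int_0^1 e^{-2\kappa R\mu}\,d\mu\bigr)$, and the elementary integral $\int_0^1 e^{-2\kappa R\mu}\,d\mu=(1-e^{-2\kappa R})/(2\kappa R)$ yields the stated value of $J(R)$. Similarly \eqref{eq:HHom} reduces to $\tfrac{B}{2}\bigl(\tfrac12-\int_0^1\mu e^{-2\kappa R\mu}\,d\mu\bigr)$, and a single integration by parts, $\int_0^1 \mu e^{-a\mu}\,d\mu = -e^{-a}/a + (1-e^{-a})/a^2$ with $a=2\kappa R$, produces the claimed formula for $H(R)$.

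There is no genuine obstacle here; the computation is routine. The only points requiring care are the bookkeeping of the $1/(2\kappa R)$ and $1/(2\kappa R)^2$ terms coming from the integration by parts, and the choice of integration branch at $r=R$. One may equally well evaluate the $r<R$ branches at $r=R$, using $\cosh(\kappa R\mu)e^{-\kappa R\mu}=\tfrac12(1+e^{-2\kappa R\mu})$ and $\mu\sinh(\kappa R\mu)e^{-\kappa R\mu}=\tfrac{\mu}{2}(1-e^{-2\kappa R\mu})$; recovering the same expressions then doubles as a consistency check confirming continuity of $J$ and $H$ across the neutrinosphere radius.
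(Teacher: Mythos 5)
Your proof is correct and follows exactly the paper's approach, which is simply direct integration of \eqref{eq:JHom} and \eqref{eq:HHom} at $r=0$ (where $G=1$ and the hyperbolic factors trivialize) and at $r=R$ (where $G(R,\mu)=\mu$ reduces the integrals to elementary exponentials). The cross-check of the two branches at $r=R$ is a nice addition but not needed.
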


\begin{proof} The proof is done by direct integration.
  \if 0 
   For $J(0)$, we have
\begin{equation}
J(0) = B\left[1-\int_0^1d\mu {\rm e}^{-\kappa R
    }\right] = B(1-{\rm e}^{-\kappa R}),
\end{equation}
because $\cosh(0)=1$ and $G(0,\mu) = 1$. Similarly for $J(R)$ we have
\begin{equation}
J(R) = \frac{B}{2}\left[1-\int_0^1
  d\mu {\rm e}^{-2\kappa R
    \mu}\right] = \frac{B}{2}\left[1 + \frac{{\rm e}^{-2\kappa R}-1}{2\kappa
    R}\right].
\end{equation}
We can do the same for the function $H$, that is
\begin{equation}
H(0) = 0,
\end{equation}
because $\sinh(0) = 0$ and
\begin{equation}
\begin{aligned}
H(R) &= \frac{B}{2}\left[\frac{1}{2}-\int_0^1
  d\mu\, \mu{\rm e}^{-2\kappa R
    \mu}\right] \\
&=  \frac{B}{2}\left[\frac{1}{2}+ {\rm e}^{-2\kappa
    R}\left(\frac{1}{2\kappa R}+\frac{1}{(2\kappa
      R)^2}\right)-\frac{1}{(2\kappa R)^2}\right].
\end{aligned}
\end{equation}
This concludes the proof.
\fi
\end{proof}

Before applying the IDSA to the homogeneous sphere, we discuss the limit of the
homogeneous sphere when the opacity $\kappa \to \infty$. We call this
limit the \emph{infinitely opaque homogeneous sphere}.

For the stationary distribution function \eqref{eq:AnaSolHS}, we have
the following limit 
\begin{equation}
\lim_{\kappa\to\infty} f(r,\mu) = \left\{ 
\begin{array}{ll}
0,&\displaystyle \text{if }r\geq R, \ -1<\mu\leq \left[1-\left(\frac{R}{r}\right)^2\right]^{1/2},\\
B,& \text{otherwise.}
\end{array}
\right.
\end{equation}
For the first three angular moments, we have 
\begin{equation}\label{eq:JJ}
\lim_{\kappa\to\infty}J(r) = \left\{
\begin{array}{lr}
\displaystyle B&r<R,\\
\displaystyle \frac{B}{2}\left[1-\sqrt{1-\left(\frac{R}{r}\right)^2}\right]&r\geq R,
\end{array}
\right.
\end{equation}
and
\begin{equation}\label{eq:HH}
\lim_{\kappa\to\infty}H(r) = \left\{
\begin{array}{lr}
0&r<R,\\
\displaystyle \frac{B}{2}\left[\frac{1}{2}\left(\frac{R}{r}\right)^2\right]&r\geq R,
\end{array}
\right.
\end{equation}
and 
\begin{equation}\label{eq:KK}
\lim_{\kappa\to\infty} K(r) = \left\{
\begin{array}{lr}
\displaystyle \frac{B}{3}&r<R,\\
\displaystyle
\frac{B}{6}\left[1-\left(1-\left(\frac{R}{r}\right)^2\right)^{3/2}\right]&r\geq
R. 
\end{array}
\right.
\end{equation}
These results directly come from Eqs
\eqref{eq:JHom}--\eqref{eq:KHom} by noting that the dependence on
$\kappa$ is contained only in the integral terms that vanish when $\kappa \to
\infty$ as shown in the following proposition.
\begin{proposition}
Integrals of the form 
\begin{eqnarray}
I_1 &=& \int_0^1d\mu \mu^{\{0,2\}} \cosh(\kappa r \mu){\rm e}^{-\kappa R
    G(r,\mu)},\\
I_2 &=&\int_0^1d\mu \mu \sinh(\kappa r \mu){\rm e}^{-\kappa R
    G(r,\mu)},\\
I_3 &=& \int_{\sqrt{1-(\frac{R}{r})^2}}^1d\mu\, \mu^{\{0,1,2\}}{\rm e}^{-2\kappa R
    G(r,\mu)},
\end{eqnarray}
vanish when $\kappa \to \infty$.
\end{proposition}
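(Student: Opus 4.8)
The plan is to dominate each integrand by an explicit function of $\kappa$ that tends to $0$ as $\kappa\to\infty$. For $I_1$ and $I_2$ this will follow from the elementary inequality $RG(r,\mu)>r|\mu|$, which holds precisely because $r<R$ in those cases; for $I_3$, where $r\ge R$ and this inequality fails, a softer dominated-convergence argument will be needed because $G$ degenerates at an endpoint of the integration interval.

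For $I_1$ and $I_2$ the potentially dangerous factor is the hyperbolic function, which grows like ${\rm e}^{\kappa r\mu}$. I would first write it in exponential form,
\begin{equation*}
\cosh(\kappa r\mu)\,{\rm e}^{-\kappa RG(r,\mu)} = \tfrac12\left({\rm e}^{-\kappa(RG(r,\mu)-r\mu)} + {\rm e}^{-\kappa(RG(r,\mu)+r\mu)}\right),
\end{equation*}
with the obvious sign change for $\sinh$. The key point is that $R^2G(r,\mu)^2 = R^2 - r^2(1-\mu^2) = (R^2-r^2) + r^2\mu^2$, so for $r<R$ one has $RG(r,\mu)>r|\mu|\ge 0$ for every $\mu\in[-1,1]$; hence both $RG(r,\mu)-r\mu$ and $RG(r,\mu)+r\mu$ are continuous and strictly positive on the compact interval $[0,1]$ and therefore attain strictly positive minima $\delta_-,\delta_+$ depending only on $r$ and $R$, not on $\kappa$. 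Using $\mu^{\{0,1,2\}}\le 1$ then gives $0\le I_1\le \tfrac12({\rm e}^{-\kappa\delta_-}+{\rm e}^{-\kappa\delta_+})$ and $|I_2|\le \tfrac12({\rm e}^{-\kappa\delta_-}+{\rm e}^{-\kappa\delta_+})$, both of which tend to $0$; this in fact yields an exponential rate.

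For $I_3$ the regime is $r\ge R$ and the integration runs over $[\mu_0,1]$ with $\mu_0=\sqrt{1-(R/r)^2}$. On this interval $G(r,\mu)\ge 0$, but $G(r,\mu_0)=0$, so there is no uniform exponential bound and the previous argument breaks down at the lower endpoint. I would instead invoke dominated convergence: the integrand is bounded by $1$ uniformly in $\kappa$ (since $G\ge 0$ on $[\mu_0,1]$ and $\mu^{\{0,1,2\}}\le1$), while for every fixed $\mu>\mu_0$ one has $G(r,\mu)>0$ and hence ${\rm e}^{-2\kappa RG(r,\mu)}\to0$; since $\{\mu_0\}$ has measure zero, the integrand converges to $0$ almost everywhere and $I_3\to0$. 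Alternatively one may split $[\mu_0,1]$ at an interior point, obtain exponential decay on the upper piece, and on the lower piece use the local behaviour $G(r,\mu)\sim c\sqrt{\mu-\mu_0}$ together with the substitution $u=\sqrt{\mu-\mu_0}$ to get an $\mathcal O(\kappa^{-2})$ bound.

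The only genuine obstacle is the degeneracy of $G$ at the lower endpoint of the interval in $I_3$, which forces the soft dominated-convergence step (or the local substitution) there; the treatment of $I_1$ and $I_2$ is completely elementary once one observes $R^2>r^2$. In the write-up I would give the $\cosh/\sinh$ estimates in detail and dispose of $I_3$ by dominated convergence, which is the shortest route and, if a uniform presentation is preferred, can be applied to $I_1$ and $I_2$ as well.
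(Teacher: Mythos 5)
Your proof is correct and follows essentially the same route as the paper: for $I_1$ and $I_2$ both arguments reduce the hyperbolic functions to exponentials and exploit the identity $(RG(r,\mu))^2-(r\mu)^2=R^2-r^2>0$ to get a strictly negative exponent, hence exponential decay. For $I_3$ you are in fact more careful than the paper, which simply declares the claim ``clear''; your observation that $G(r,\mu)$ vanishes at the lower endpoint $\mu_0=\sqrt{1-(R/r)^2}$, so that the convergence is only pointwise a.e.\ and a dominated-convergence (or local substitution) step is genuinely needed, fills a small gap in the paper's argument.
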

\begin{proof}
The proof for $I_3$ is clear as the only dependence on $\kappa$ is in
the exponential. The proof for $I_1$ and $I_2$ relies on the fact that
the integrants satisfy
\begin{eqnarray}
\mu^{\{0,2\}} \cosh(\kappa r \mu){\rm e}^{-\kappa R
    G(r,\mu)}&=&\mathcal O({\rm e}^{\kappa (r\mu- R
    G(r,\mu))}),\\
\mu \sinh(\kappa r \mu){\rm e}^{-\kappa R
    G(r,\mu)}&=&\mathcal O({\rm e}^{\kappa( r\mu- R
    G(r,\mu))}),
\end{eqnarray}
because the behavior of $\cosh$ and $\sinh$ is dominated by their
positive exponential parts.
For $I_1$ and $I_2$ to vanish in the limit $\kappa \to \infty$, it is sufficient to check that $r\mu- R
    G(r,\mu)<0$ for $r<R$, or equivalently, because all these
    quantities are positive, that $(r\mu)^2- (R
    G(r,\mu))^2<0$ for $r<R$. We obtain
\begin{equation}
\begin{aligned}
(r\mu)^2- (R
    G(r,\mu))^2 &= (r\mu)^2- R^2+r^2(1-\mu^2)\\
&=- R^2+r^2<0,\quad \text{since }r<R.
\end{aligned}
\end{equation}
This concludes the proof.
\end{proof}

In this limit, we can compute the variable Eddington factor $k =
\frac{K}{J}$ and the flux factor $h = \frac{H}{J}$ defined in
Definition~\ref{def:FF}.

\begin{proposition}\label{prop:FluxFactorHS} In the limit $\kappa \to \infty$ the flux factor $h_R$
  and the variable Eddington factor $k_R$ of the homogeneous sphere of
  radius $R$ are given by
\begin{equation}\label{eq:HR}
h_R(r) = \frac{H(r)}{J(r)} = 
\left\{
\begin{array}{ll}
0,&r<R,\\
\displaystyle\frac{1}{2}\left(1+\sqrt{1-\left(\frac{R}{r}\right)^2}\right),&r\geq R,
\end{array}
\right.
\end{equation}
and
\begin{equation}\label{eq:KR}
k_R(r) = \frac{K(r)}{J(r)} =
\left\{
\begin{array}{ll}
\displaystyle\frac{1}{3},&r<R,\\
\displaystyle\frac{1}{3}\left(2-\left(\frac{R}{r}\right)^2+\sqrt{1-\left(\frac{R}{r}\right)^2}\right),&r\geq R.
\end{array}
\right.
\end{equation}
\end{proposition}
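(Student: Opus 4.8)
The plan is to compute the ratios $h_R = H/J$ and $k_R = K/J$ directly from the limiting moments given in Equations \eqref{eq:JJ}, \eqref{eq:HH} and \eqref{eq:KK}, which were already established as the $\kappa \to \infty$ limits of the exact moments \eqref{eq:JHom}--\eqref{eq:KHom}. The case $r < R$ is immediate: since $\lim_{\kappa\to\infty} H(r) = 0$ while $\lim_{\kappa\to\infty} J(r) = B \neq 0$, we get $h_R(r) = 0$; similarly $k_R(r) = (B/3)/B = 1/3$. So the only real content is the region $r \geq R$.

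For $r \geq R$, I would introduce the shorthand $x := \sqrt{1 - (R/r)^2}$, so that $x \in [0,1)$ and $(R/r)^2 = 1 - x^2$. Then from \eqref{eq:JJ}, \eqref{eq:HH}, \eqref{eq:KK} one has
\begin{equation}
\lim_{\kappa\to\infty}J(r) = \frac{B}{2}(1 - x), \qquad
\lim_{\kappa\to\infty}H(r) = \frac{B}{4}(1 - x^2), \qquad
\lim_{\kappa\to\infty}K(r) = \frac{B}{6}\bigl(1 - x^3\bigr).
\end{equation}
The flux factor is then
\begin{equation}
h_R(r) = \frac{B(1-x^2)/4}{B(1-x)/2} = \frac{1-x^2}{2(1-x)} = \frac{1+x}{2},
\end{equation}
using the factorization $1 - x^2 = (1-x)(1+x)$ and $1 - x \neq 0$. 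Substituting back $x = \sqrt{1-(R/r)^2}$ recovers \eqref{eq:HR}. For the Eddington factor,
\begin{equation}
k_R(r) = \frac{B(1-x^3)/6}{B(1-x)/2} = \frac{1-x^3}{3(1-x)} = \frac{1 + x + x^2}{3},
\end{equation}
using $1 - x^3 = (1-x)(1 + x + x^2)$. It then remains to check the algebraic identity
\begin{equation}
1 + x + x^2 = 2 - (R/r)^2 + \sqrt{1-(R/r)^2},
\end{equation}
which follows by writing $x^2 = 1 - (R/r)^2$, so $1 + x + x^2 = 1 + x + 1 - (R/r)^2 = 2 - (R/r)^2 + x$; substituting $x = \sqrt{1-(R/r)^2}$ gives \eqref{eq:KR}.

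There is essentially no obstacle here — the proof is a short computation once the $x$-substitution is in place; the only mild subtlety is confirming the denominator $1 - x$ never vanishes, which holds because $R/r > 0$ forces $x < 1$ strictly for $r \geq R$ (and at $r = R$ one has $x = 0$, consistent with $h_R(R) = 1/2$, $k_R(R) = 1/3$, matching the $r < R$ values by continuity). I would also note in passing that \eqref{eq:HR} coincides with the scattering-sphere closure $h^{\rm free}$ of Equation \eqref{eq:hfree}, which is the promised rederivation announced in the text after that assumption.
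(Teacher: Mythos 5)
Your proof is correct and follows exactly the route the paper intends: the paper's own proof consists of the single line ``direct computation from equations \eqref{eq:JJ}--\eqref{eq:KK}'', and your substitution $x=\sqrt{1-(R/r)^2}$ with the factorizations $1-x^2=(1-x)(1+x)$ and $1-x^3=(1-x)(1+x+x^2)$ is precisely that computation carried out explicitly. Nothing is missing; the remarks on the nonvanishing denominator and the agreement with $h^{\rm free}$ are accurate.
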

\begin{proof} The proof is done by direct computation from equations \eqref{eq:JJ}-\eqref{eq:KK}.
\end{proof}

With these results on the behavior of the analytical solution of the
homogeneous sphere, we can now study the IDSA in this
particular case.

\subsection{Applying the IDSA to the homogeneous
 sphere} 
In order to apply the IDSA to the homogeneous sphere test case and to overcome the numerical problems illustrated in Subsection \ref{sub:issues}, we need to reformulate the IDSA. This is done by introducing the following assumption. 
\begin{assumption}\label{ass:RisRnu}
We assume that the radius $R$ of the
homogeneous sphere cor\-res\-ponds to the neutrinosphere radius. We also
assume that
\begin{enumerate}
\item The domain of validity of the diffusion and the reaction regimes
  is cha\-rac\-te\-ri\-zed by $r<R$.
\item The domain of validity of the free streaming regime is
  characterized by $r\geq R$.
\end{enumerate}
\end{assumption}
This is a very important assumption because it allows us to avoid the
problematic diffusion source $\Sigma$ and to realize the coupling
directly and explicitly. This way, we eliminate both the spurious
trapped and the instability problem discussed above and we will have
access to the modeling error of the IDSA.
\begin{remark}\label{rem:gdemi} Assumption \ref{ass:RisRnu} allows us to simplify the
  Eqs \eqref{eq:str.reac} and \eqref{eq:str.reac.2} for the streaming
  particles and \eqref{eq:str.diff} and \eqref{eq:str.diff1} for the
  trapped particles, recalling \eqref{eq:J_sr.free}, because the
  function  $h^{\rm free} = g_{\rm idsa} = \frac{1}{2}$ in these cases.
\end{remark}
\begin{remark}\label{rem:Rnu}We remark here that Assumption
  \ref{ass:RisRnu} does not 
  match the usual definition of the neutrinosphere based on the optical
  depth, see Eq. \eqref{eq:tau}. In fact, in the case of the
  homogeneous sphere, one can compute the neutrinosphere radius
  $R_\nu$
\begin{equation}
\int_{R_{\nu}}^\infty \kappa_{\rm a}(r) dr = \kappa\int_{R_{\nu}}^R dr=\kappa(R-R_{\nu}) =\frac{2}{3},
\end{equation}
where we used the definition of $\kappa_{\rm a}$, see
Eq. \eqref{eq:kappaHS}; that is, the usual definition of the
neutrinosphere radius only corresponds to the homogeneous sphere
radius in the limit $\kappa \to \infty$ as $R-R_{\nu} = \frac{2}{3\kappa}$.
\end{remark}
\begin{remark}[IDSA in the limit $\kappa \to \infty$]
In the limit $\kappa \to \infty$, the diffusion limit is irrelevant
and we have the following solution
\begin{eqnarray}
J^t_{\rm reac}(r,t) &=& B,\quad r< R,\\
J^s_{\rm reac}(r,t) &=& 0,\quad r< R,\\
J^t_{\rm free}(r,t) &=& 0,\quad r \geq R,\\
J^s_{\rm free}(r,t) &=&
C(t)\frac{2}{R^2}\left(1-\sqrt{1-\left(\frac{R}{r}\right)^2}\right),\quad
r \geq R.
\end{eqnarray}
If we set $C(t):= \frac{R^2B}{4}$, the solution of this coupling is
equivalent to the analytical solution \eqref{eq:JJ} of the infinitely
opaque homogeneous 
sphere. This condition can be ensured by requiring the boundary
condition $J^s_{\rm free}(R,t) = \frac{B}{2}$.
\end{remark}
Using Assumption \ref{ass:RisRnu} and Remark \ref{rem:gdemi}, one can rewrite the IDSA equations \eqref{eq:str.diff1}--\eqref{eq:str.free} without reaction and with $\kappa =0$ for $r\geq R$ as 
\begin{equation}\label{eq:IDSAsansReac}
\begin{array}{rclr}
\displaystyle\partial_t J^t -\frac{1}{r^2}\partial_r\left(\frac{r^2}{3\kappa}\partial_r
      J^t\right)-\frac{2}{3 }\partial_rJ^t &=& \kappa(B-J^t),&r<R,\\
J^s &=&\displaystyle -\frac{2}{3\kappa}\partial_r J^t, & r<R,\\
J^t &=& 0,& r\geq R,\\
\displaystyle\frac{1}{r^2}\partial_r\left(r^2g(r) J^s\right) &=&
0,&r\geq R,
\end{array}
\end{equation}
with $g(r)=
\frac{1}{2}\left(1+\sqrt{1-\left(\frac{R}{r}\right)^2}\right)$ the
geometrical factor appearing in Eq. \eqref{eq:hfree}.

It is possible to reconstruct the first and second angular moments $H$
and $K$ by defining flux factors for the trapped and the free
streaming components. We set
\begin{equation}\label{eq:FFtrapped}
h^t_R := 0,\qquad h^s_R:=\left\{
\begin{array}{ll}
\displaystyle\frac{1}{2},&r<R,\\
\displaystyle\frac{1}{2}\left(1+\sqrt{1-\left(\frac{R}{r}\right)^2}\right),&r\geq R,
\end{array}
\right.
\end{equation}
and
\begin{equation}\label{eq:FFfree}
k^t_R := \frac{1}{3},\qquad k^s_R:=\left\{
\begin{array}{ll}
\displaystyle\frac{1}{3},&r<R,\\
\displaystyle\frac{1}{3}\left(2-\left(\frac{R}{r}\right)^2+\sqrt{1-\left(\frac{R}{r}\right)^2}\right),&r\geq R.
\end{array}
\right.
\end{equation}

\begin{remark} We note that the definition of $h^s_R$ corresponds to
  the definition of $h^{\rm free} = g_{\rm idsa}$, see Eq. \eqref{eq:hfree}. In
  fact, this definition can be 
  used as a justification for the definition of $g_{\rm idsa}$ in the
  derivation of the IDSA. The definition of these flux factors
  extend the stationary free streaming limit to the first
  order moment solution.
\end{remark}

\begin{proposition}\label{prop:FFIDSA} With Equations \eqref{eq:FFtrapped} and
  \eqref{eq:FFfree}, we can reconstruct $H_{\rm idsa}(r)$ and $K_{\rm
    idsa}(r)$ through the relations
\begin{eqnarray}
H_{\rm idsa}(r) &=&h^t_R(r)J^t(r) +h^s_R(r)J^s(r) =h^s_R(r)J^s(r),\label{eq:HIDSA} \\
K_{\rm idsa}(r) &=&k^t_R(r)J^t(r) +k^s_R(r)J^s(r).
\end{eqnarray}
This reconstruction has the following properties:
\begin{enumerate}
\item IDSA closure in the diffusion regime:
$$H_{\rm idsa} = -\frac{1}{3\kappa}\partial_r J^t,\  \text{if }r<R.$$
\item Free streaming flux ratio:
$$h^s_R(r)=h^{\rm idsa}_R(r) = h_R(r),\  \text{if }r\geq R.$$
\item Diffusion variable Eddington factor:
$$k^{\rm idsa}_R(r) = \frac{1}{3} = k_R(r),\  \text{if }r<R.$$
\item Free streaming variable Eddington factor:
$$k^s_R(r)=k^{\rm idsa}_R(r) = k_R(r),\  \text{if }r\geq R,$$
\end{enumerate}
where $h_R$ and $k_R$ are the flux factors for the infinitely opaque
homogeneous sphere 
and $h^{\rm idsa}_R:= \frac{H_{\rm idsa}}{J^t+J^s}$ and $k^{\rm
  idsa}_R:= \frac{K_{\rm idsa}}{J^t+J^s}$ are the IDSA flux factors.
\end{proposition}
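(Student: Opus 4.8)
The plan is to verify the four listed properties one at a time, each being a direct substitution of the flux-factor definitions \eqref{eq:FFtrapped}--\eqref{eq:FFfree} and of the reformulated IDSA equations \eqref{eq:IDSAsansReac} into the reconstruction formula \eqref{eq:HIDSA}, followed by a comparison with the infinitely opaque homogeneous sphere flux factors $h_R$, $k_R$ of Proposition \ref{prop:FluxFactorHS}.

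First, for item (1) I would restrict to the opaque region $r<R$, where $h^t_R=0$ and $h^s_R=\frac12$. Plugging $J^s=-\frac{2}{3\kappa}\partial_r J^t$ (second line of \eqref{eq:IDSAsansReac}) into $H_{\rm idsa}=h^s_R J^s$ gives $H_{\rm idsa}=\frac12\bigl(-\frac{2}{3\kappa}\partial_r J^t\bigr)=-\frac{1}{3\kappa}\partial_r J^t$, which is exactly the trapped-component diffusion closure from \eqref{eq:diff.closure}. For items (2) and (4) I would use that the reformulation forces $J^t\equiv0$ on $\{r\ge R\}$ (third line of \eqref{eq:IDSAsansReac}), so there $J^t+J^s=J^s$ and hence $h^{\rm idsa}_R=H_{\rm idsa}/J^s=h^s_R$ and $k^{\rm idsa}_R=(k^t_RJ^t+k^s_RJ^s)/J^s=k^s_R$; the equalities $h^s_R=h_R$ and $k^s_R=k_R$ on $\{r\ge R\}$ are then immediate, since the $r\ge R$ branches of \eqref{eq:FFtrapped}--\eqref{eq:FFfree} are literally the $r\ge R$ branches of \eqref{eq:HR}--\eqref{eq:KR}. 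For item (3), on $\{r<R\}$ both $k^t_R$ and $k^s_R$ equal $\frac13$, so $k^{\rm idsa}_R=(k^t_RJ^t+k^s_RJ^s)/(J^t+J^s)=\frac13$, which matches $k_R\equiv\frac13$ there.

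I expect no genuine analytic obstacle: each identity collapses to a one-line algebraic manipulation, and the definitions \eqref{eq:FFtrapped}--\eqref{eq:FFfree} were arranged precisely so that these matches occur. The only thing to watch is the bookkeeping of which branch ($r<R$ versus $r\ge R$) of each flux factor is used in each item, and the legitimacy of dividing by $J^t+J^s$ when forming the IDSA flux ratios $h^{\rm idsa}_R$, $k^{\rm idsa}_R$; this is fine wherever $J^s$ does not vanish (recall $J^s=-\frac{2}{3\kappa}\partial_r J^t>0$ for admissible, radially decreasing trapped profiles by Remark \ref{rem:2}, and $J^s>0$ for $r\ge R$ by the streaming profile in \eqref{eq:IDSAsansReac}). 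Collecting these computations establishes all four properties.
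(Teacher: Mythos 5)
Your proposal is correct and follows essentially the same route as the paper's own proof: direct substitution of the flux-factor definitions and of the reformulated system \eqref{eq:IDSAsansReac} into the reconstruction formulas, using $J^t=0$ for $r\geq R$ and $k^t_R=k^s_R=\frac{1}{3}$ for $r<R$, then comparing with Proposition \ref{prop:FluxFactorHS}. Your additional remark on the legitimacy of dividing by $J^t+J^s$ is a welcome precision the paper leaves implicit.
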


\begin{proof}
We prove separatly the four points.
\begin{enumerate}
\item The first statement can be proved by inserting Eqs
  \eqref{eq:FFtrapped} and \eqref{eq:IDSAsansReac} into Eq.~\eqref{eq:HIDSA}.
\item In the region $r\geq R$, we have $J^t = 0$. As a consequence,
  $h_R^s = h^{\rm idsa}_R = h_R$ by Eqs \eqref{eq:FFtrapped} and
  \eqref{eq:HR}. 
\item In the region $r<R$, we have $k^t_R = k^s_R = 1/3$. Therefore, 
$$
K_{\rm idsa}(r) =k^t_R(r)J^t(r) +k^s_R(r)J^s(r) =
\frac{1}{3}(J^t(r)+J^s(r)) = \frac{1}{3}J_{\rm idsa}(r).
$$
The proof is completed by comparing this result with Eq. \eqref{eq:KR}.
\item The proof of this point is similar to the proof of 2.
\end{enumerate}
This concludes the proof.
\end{proof}

The proposition \ref{prop:FFIDSA} shows that the flux factors of the
IDSA correspond to the flux factors of the infinitely opaque homogeneous sphere. However, the flux ratio of the IDSA in the
diffusion region $r<R$ does not match the diffusion limit relation
\eqref{eq:diff.closure} even if the form is similar. In fact,
\begin{equation}\label{eq:diffEqHR}
H_{\rm idsa} = -\frac{1}{3\kappa}\partial_r J^t \neq
-\frac{1}{3\kappa}\partial_r J_{\rm idsa} = -\frac{1}{3\kappa}\partial_r J^t +\frac{2}{9\kappa^2}\partial^2_r J^t,
\end{equation}
where we used Eq. \eqref{eq:IDSAsansReac}$_2$ to eliminate the streaming
component in the last equality.
This means that the diffusion limit is not well represented by this
formulation. Moreover, the diffusion equation \eqref{eq:tr.with.adv} used for the trapped
particles does not correspond to the usual diffusion approximation
equation \eqref{eq:J_sr.diff}.

Another possible problem of this formulation is that the amount of
streaming particles at the homogeneous sphere radius is not
necessarily accurate. If this value is inaccurate, then the error
will be propagated by the free streaming equation in the whole domain
where $r\geq R$. This can create large approximation errors.

In order to deal with the problem of the advection term and the
possible inaccuracy of the amount of streaming particles at the
homogeneous sphere radius, we propose in the next subsection some
possible modifications of the approximation.

\subsection{Improvements of the scheme}\label{sec:Impro}
We now propose some improvements of the IDSA. We propose to use the
diffusion equation \eqref{eq:J_sr.diff} instead of
Eq. \eqref{eq:IDSAsansReac}$_1$ for the evolution of the 
trapped particles and to
modify the reconstruction of the streaming 
particles in the diffusion regime to correctly predict the amount of
streaming particles at the homogeneous sphere radius. This is done through the following assumption
\begin{assumption}
In the diffusion region, we assume that the free streaming component
can be reconstructed from the trapped component by a relation of the
form
\begin{equation}
J^s(r) = C \partial_r J^t.
\end{equation}
The constant $C$ is chosen in such a way that $J^s(R) = J(R) = \frac{B}{2}\left[1 + \frac{{\rm e}^{-2\kappa R}-1}{2\kappa
    R}\right]$, see Proposition \ref{prop:JH}.
\end{assumption}
The scheme obtained can be written as
\begin{equation}\label{eq:IDSANew1}
\begin{array}{rclr}
\displaystyle\partial_t J^t -\frac{1}{r^2}\partial_r\left(\frac{r^2}{3\kappa}\partial_r
      J^t\right)\!\!\!\!&=&\!\!\!\! \kappa(B-J^t),&r<R,\\
J^s \!\!\!\!&=&\!\!\!\!\displaystyle C\partial_rJ^t = \frac{B}{2}\left[1 + \frac{{\rm e}^{-2\kappa R}\!-1}{2\kappa
    R}\right]\frac{\partial_r J^t}{\partial_r J^t|_{r=R}},\!\!\! & r<R,\\
J^t \!\!\!\!&=&\!\!\!\! 0,& r\geq R,\\
\displaystyle\frac{1}{r^2}\partial_r\left(r^2g(r) J^s\right) \!\!\!\!&=&\!\!\!\!
0,&r\geq R,
\end{array}
\end{equation}
where $g(r) =
\frac{1}{2}\left(1+\sqrt{1-\left(\frac{R}{r}\right)^2}\right)$. The
results 2.--4. of Proposition \ref{prop:FFIDSA} still hold and the
point 1. can be rephrased as 
\begin{itemize}
\item[1'.] $H_{\rm idsa} = \frac{C}{2}\partial_r J^t$, if $r<R$.
\end{itemize}
The proof of this fact closely follows the proof of point 1. of
Proposition~\ref{prop:FFIDSA}.

The scheme ensures that the trapped particles are evolved by a
diffusion equation and that the amount of streaming particles is
correct in the whole domain where $r\geq R$. Another advantage of this
formulation is that it has a stationary state closed form solution.

In the stationary state limit, we can compute the closed form solution
of system~\eqref{eq:IDSANew1} given in the following proposition. 
\begin{proposition}\label{prop:IDSANew}
The closed form solution of the stationary state limit of
\eqref{eq:IDSANew1} is given by 
\begin{equation}\label{eq:IDSANewSol}
\begin{array}{rclr}
J^t(r) &=&\displaystyle B \left(1-\frac{R}{r}\frac{\sinh(\sqrt{3}\kappa r)}{\sinh(\sqrt{3}\kappa R)}\right),&r<R,\\
J^s(r) &=&\displaystyle C \frac{3\kappa^2BR}{\sinh(\sqrt{3}\kappa
  R)}\left(\frac{\sinh(\sqrt{3}\kappa r)}{(\sqrt{3}\kappa r)^2}-\frac{
    \cosh(\sqrt{3}\kappa r)}{\sqrt{3}\kappa r}\right), & r<R,\\
J^t(r) &=& 0,& r\geq R,\\
J^s(r) &=&\displaystyle\frac{B}{2}\left[1 + \frac{{\rm e}^{-2\kappa R}-1}{2\kappa
    R}\right]\left(1-\sqrt{1-\left(\frac{R}{r}\right)^2}\right),&r\geq R,
\end{array}
\end{equation}
where $C = \frac{R}{2}\left[1 + \frac{{\rm e}^{-2\kappa R}-1}{2\kappa
    R}\right]\left(1-\frac{\sqrt{3}\kappa R}{\tanh(\sqrt{3}\kappa
    R)}\right)^{-1}$. 

\end{proposition}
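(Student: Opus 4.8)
The plan is to integrate system \eqref{eq:IDSANew1} region by region in the stationary state, i.e.\ after dropping $\partial_t J^t$. In the opaque region $r<R$ the first equation becomes, since $\kappa$ is constant,
\[
\frac{1}{r^2}\partial_r\!\left(r^2\partial_r J^t\right) = 3\kappa^2\bigl(J^t-B\bigr).
\]
Substituting $u:=J^t-B$ turns this into $u''+\frac{2}{r}u'-3\kappa^2 u=0$, the modified spherical Bessel equation of order zero, whose general solution is $u(r)=\frac{1}{r}\bigl(A\sinh(\sqrt3\,\kappa r)+D\cosh(\sqrt3\,\kappa r)\bigr)$. I would then impose the two boundary conditions: regularity of $J^t$ at $r=0$ forces $D=0$ (the $\cosh$ term behaves like $1/r$), and continuity with the transparent region, where $J^t\equiv 0$, forces $J^t(R)=0$, which fixes $A=-BR/\sinh(\sqrt3\,\kappa R)$. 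This yields the first line of \eqref{eq:IDSANewSol}.

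Next I would obtain $J^s$ on $r<R$ simply by differentiating, using $J^s=C\,\partial_r J^t$ from \eqref{eq:IDSANew1}$_2$. Computing
\[
\partial_r J^t(r)=\frac{BR}{\sinh(\sqrt3\,\kappa R)}\left(\frac{\sinh(\sqrt3\,\kappa r)}{r^2}-\frac{\sqrt3\,\kappa\cosh(\sqrt3\,\kappa r)}{r}\right)
\]
and factoring out $3\kappa^2$ from the parenthesis gives exactly the second line of \eqref{eq:IDSANewSol}. To pin down $C$, I would evaluate $\partial_r J^t$ at $r=R$, obtaining $\partial_r J^t|_{r=R}=\frac{B}{R}\bigl(1-\sqrt3\,\kappa R/\tanh(\sqrt3\,\kappa R)\bigr)$, and then divide the prescribed value $J^s(R)=J(R)=\frac{B}{2}\bigl[1+\frac{e^{-2\kappa R}-1}{2\kappa R}\bigr]$ (Proposition \ref{prop:JH}) by it; this is precisely the stated formula for $C$.

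For the transparent region $r\ge R$ the third equation gives $J^t\equiv 0$, and the fourth equation says $r^2 g(r)J^s$ is constant. Evaluating the constant at $r=R$, where $g(R)=\tfrac12$ and $J^s(R)=\frac{B}{2}\bigl[1+\frac{e^{-2\kappa R}-1}{2\kappa R}\bigr]$, and then dividing by $r^2 g(r)=\frac{r^2}{2}\bigl(1+\sqrt{1-(R/r)^2}\bigr)$, I would use the rationalization
\[
\frac{1}{1+\sqrt{1-(R/r)^2}}=\frac{r^2}{R^2}\Bigl(1-\sqrt{1-(R/r)^2}\Bigr)
\]
to recover the last line of \eqref{eq:IDSANewSol}. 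The only mildly delicate point is the first step: recognizing the reduced ODE as the order-zero modified spherical Bessel equation and justifying the regularity condition at the origin together with the continuity matching at $r=R$; everything after that is elementary differentiation and simplification.
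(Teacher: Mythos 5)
Your derivation is correct, and every computation checks out: the reduction to $u''+\frac{2}{r}u'-3\kappa^2u=0$ with $u=J^t-B$, the elimination of the $\cosh$ branch by regularity at the origin, the matching $J^t(R)=0$, the evaluation $\partial_r J^t|_{r=R}=\frac{B}{R}\bigl(1-\sqrt{3}\kappa R/\tanh(\sqrt{3}\kappa R)\bigr)$ giving the stated $C$, and the rationalization step for $r\geq R$. However, you took a genuinely different route from the paper: the paper's proof is a one-line verification, stating that it suffices to substitute the proposed formulas into system \eqref{eq:IDSANew1} and check that they satisfy it. Your constructive approach buys more — it shows \emph{how} the solution arises and, given the boundary conditions you impose (regularity at $r=0$ and $J^t(R)=0$), it establishes uniqueness, which a pure substitution check does not. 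The price is that you must commit to a specific choice of boundary conditions; the paper defers that discussion to an external reference and to Remark \ref{rem:NIDSAprop} (which motivates a Neumann condition at $r=0$, equivalent in effect to your regularity argument since the $\cosh$ branch is the one with a singular derivative as well). If you wanted to match the paper's level of rigor exactly, the substitution check would suffice; your version is the more informative proof.
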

\begin{proof} To prove this proposition, it is enough to substitute the proposed solution into system \eqref{eq:IDSANew1} and checking that it is a solution.
\end{proof}
\begin{remark}\label{rem:NIDSAprop} The trapped component of the solution \eqref{eq:IDSANewSol} has the property to have a negative radial derivative that vanishes at $r=0$, see for example \cite[Chap. 4]{MichaudPHD}. This ensures that the streaming component is positive. The vanishing at $r=0$ property motivates the use of a Neumann boundary condition at this point.
\end{remark}

Proposition \ref{prop:IDSANew} gives a closed form solution of the
new version of the IDSA. 
It can be shown 
that the total solution $J_{\rm idsa} = J^t + J^s$ is bounded by
$B$. As a consequence of Remark \ref{rem:NIDSAprop}, we have $J^s(0)=0$ and $J_{\rm
  idsa}(0) = J^t(0) = B\left(1-\frac{\sqrt{3}\kappa R}{\sinh(\sqrt{3}\kappa
      R)}\right)$, since $\lim_{x\to 0} \frac{\sinh(x)}{x} = 1$. This value has to be compared with the value of the
  solution of the homogeneous sphere given in Proposition~\ref{prop:JH}.
 
   This comparison leads to the following result.
  \begin{figure}[t]
\centering
\includegraphics[width = .48\linewidth]{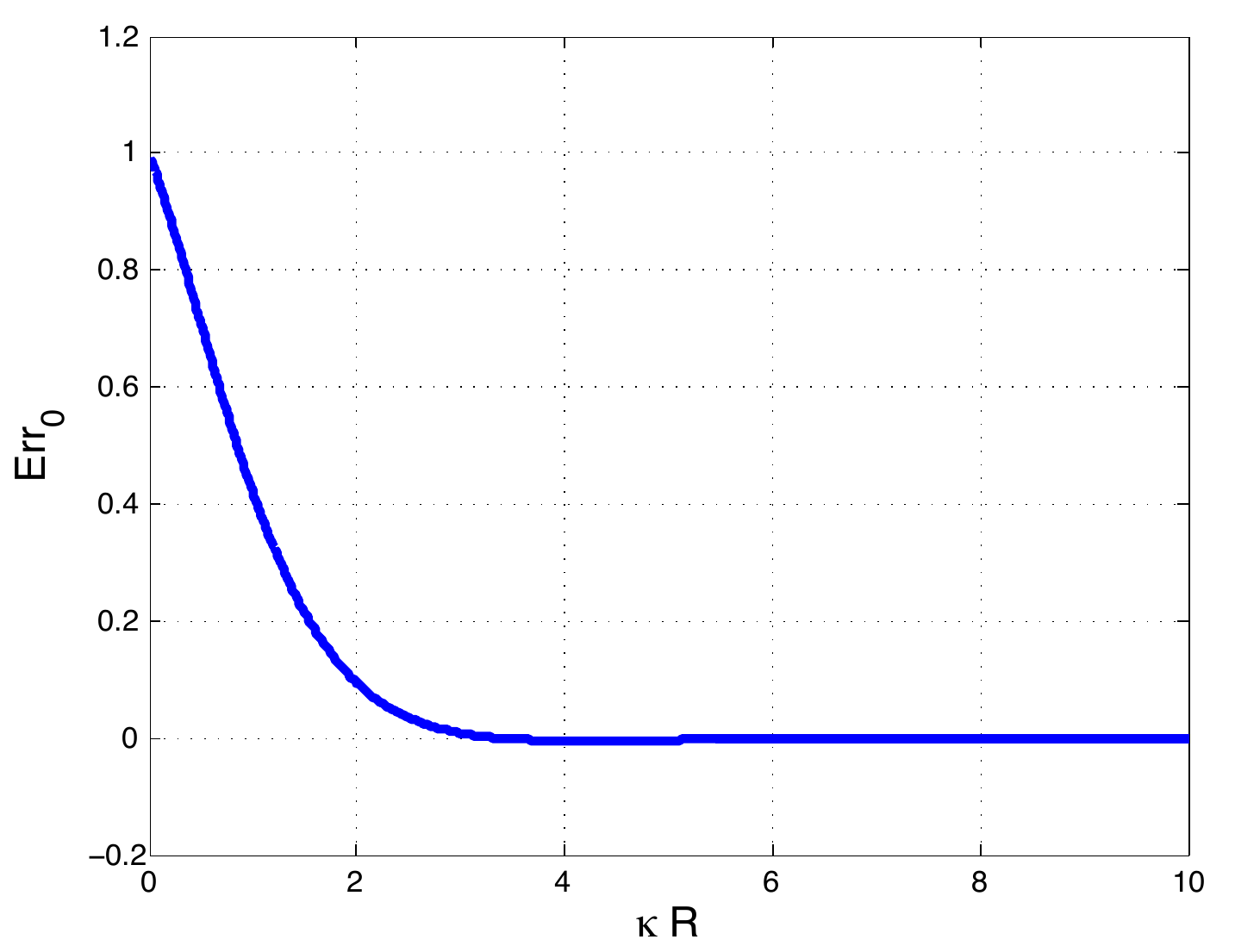}
\caption{Plot of the value of ${\rm Err}_0$ as a function of $\kappa R$. The
  plot shows that if $\kappa R$ is large enough, the 
  error in $r=0$ vanishes.}
\label{fig:Err0}
\end{figure}
\begin{proposition}
The relative error at $r=0$ of the New IDSA approximation~\eqref{eq:IDSANewSol} is given by
\begin{equation}
{\rm Err}_0 = \frac{\left(\frac{\sqrt{3}\kappa R}{\sinh(\sqrt{3}\kappa
      R)}-{\rm e}^{-\kappa R}\right)}{(1-{\rm e}^{-\kappa R})}.
\end{equation}
\end{proposition}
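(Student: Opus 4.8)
The plan is to compute the relative error directly, since both relevant values are already available in closed form. By Remark~\ref{rem:NIDSAprop} and the paragraph preceding the statement, the New IDSA approximation at $r=0$ satisfies $J_{\rm idsa}(0) = J^t(0) = B\left(1-\frac{\sqrt{3}\kappa R}{\sinh(\sqrt{3}\kappa R)}\right)$, using $\lim_{x\to 0}\frac{\sinh x}{x}=1$ applied to the trapped component of \eqref{eq:IDSANewSol}; meanwhile the exact value for the homogeneous sphere is $J(0) = B(1-{\rm e}^{-\kappa R})$ from Proposition~\ref{prop:JH}, Eq.~\eqref{eq:J0HR}. The relative error is then the quotient
\begin{equation}
{\rm Err}_0 = \frac{J(0) - J_{\rm idsa}(0)}{J(0)} = \frac{B(1-{\rm e}^{-\kappa R}) - B\left(1-\frac{\sqrt{3}\kappa R}{\sinh(\sqrt{3}\kappa R)}\right)}{B(1-{\rm e}^{-\kappa R})}.
\end{equation}

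First I would cancel the common factor $B$ from numerator and denominator. Then the numerator simplifies: $(1-{\rm e}^{-\kappa R}) - \left(1 - \frac{\sqrt{3}\kappa R}{\sinh(\sqrt{3}\kappa R)}\right) = \frac{\sqrt{3}\kappa R}{\sinh(\sqrt{3}\kappa R)} - {\rm e}^{-\kappa R}$, which is exactly the numerator appearing in the claimed formula. Dividing by $(1-{\rm e}^{-\kappa R})$ gives the stated expression, completing the proof.

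There is essentially no obstacle here — the statement is a direct algebraic consequence of two values established earlier in the text. The only point requiring a word of care is the evaluation of the limit $r\to 0$ in the trapped component: in \eqref{eq:IDSANewSol} one has $J^t(r) = B\left(1 - \frac{R}{r}\frac{\sinh(\sqrt{3}\kappa r)}{\sinh(\sqrt{3}\kappa R)}\right)$, and one must note $\frac{R}{r}\sinh(\sqrt{3}\kappa r) = \sqrt{3}\kappa R\cdot\frac{\sinh(\sqrt{3}\kappa r)}{\sqrt{3}\kappa r} \to \sqrt{3}\kappa R$ as $r\to 0$, together with $J^s(0)=0$ from Remark~\ref{rem:NIDSAprop} so that $J_{\rm idsa}(0)=J^t(0)$. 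Everything else is a one-line subtraction. One may optionally remark, consistently with Figure~\ref{fig:Err0}, that since $\frac{\sqrt{3}\kappa R}{\sinh(\sqrt{3}\kappa R)}\to 0$ and ${\rm e}^{-\kappa R}\to 0$ exponentially as $\kappa R\to\infty$, the error ${\rm Err}_0$ tends to $0$ in the infinitely opaque limit.
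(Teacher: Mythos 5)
Your proof is correct and follows exactly the paper's own argument: evaluate \eqref{eq:IDSANewSol}$_1$ at $r=0$ via $\lim_{x\to 0}\frac{\sinh(x)}{x}=1$, subtract from \eqref{eq:J0HR}, and divide by \eqref{eq:J0HR}. The extra care you take with the limit of the trapped component and the remark on the $\kappa R\to\infty$ behavior are consistent with the text and add nothing problematic.
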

\begin{proof} It suffices to substract the evaluation of \eqref{eq:IDSANewSol}$_1$ at $r=0$, using the fact that $\lim_{x\to 0} \frac{\sinh(x)}{x} = 1$, from \eqref{eq:J0HR} and divide by
  Eq. \eqref{eq:J0HR}. 
\end{proof}
The value of ${\rm Err}_0$ can be used as a measure of the
validity of the 
New IDSA approximation~\eqref{eq:IDSANew1}.

Figure \ref{fig:Err0} shows that the error vanishes when $\kappa R$ is
large. This figure also shows that the approximation fails when $\kappa R$ is
small, that is when the diffusion approximation is not valid, as
expected.

\section{Numerical experiments}\label{sec:Num}
For the numerical experiments, we apply the two versions of the IDSA
designed before to the homogeneous sphere test case. We refer to
Eq. \eqref{eq:IDSAsansReac} as the \emph{Old IDSA} approximation
and to Eq. \eqref{eq:IDSANew1} as the \emph{New IDSA}
approximation. We use a backward Euler scheme with a finite
difference scheme in space with a very fine grid, $N_r =
10^6$. 
Such a fine grid eliminates discretization errors and the
remaining discrepancy can be attributed to modeling error. We use $0$ initial conditions and boundary conditions are discussed in \cite[Chap.4]{MichaudPHD}.\\

The test case we choose is an homogeneous sphere of radius $R=6$. The
value of $\kappa$ is varied in the different tests.
\begin{figure}[t]
\centering
\includegraphics[width = .48\linewidth]{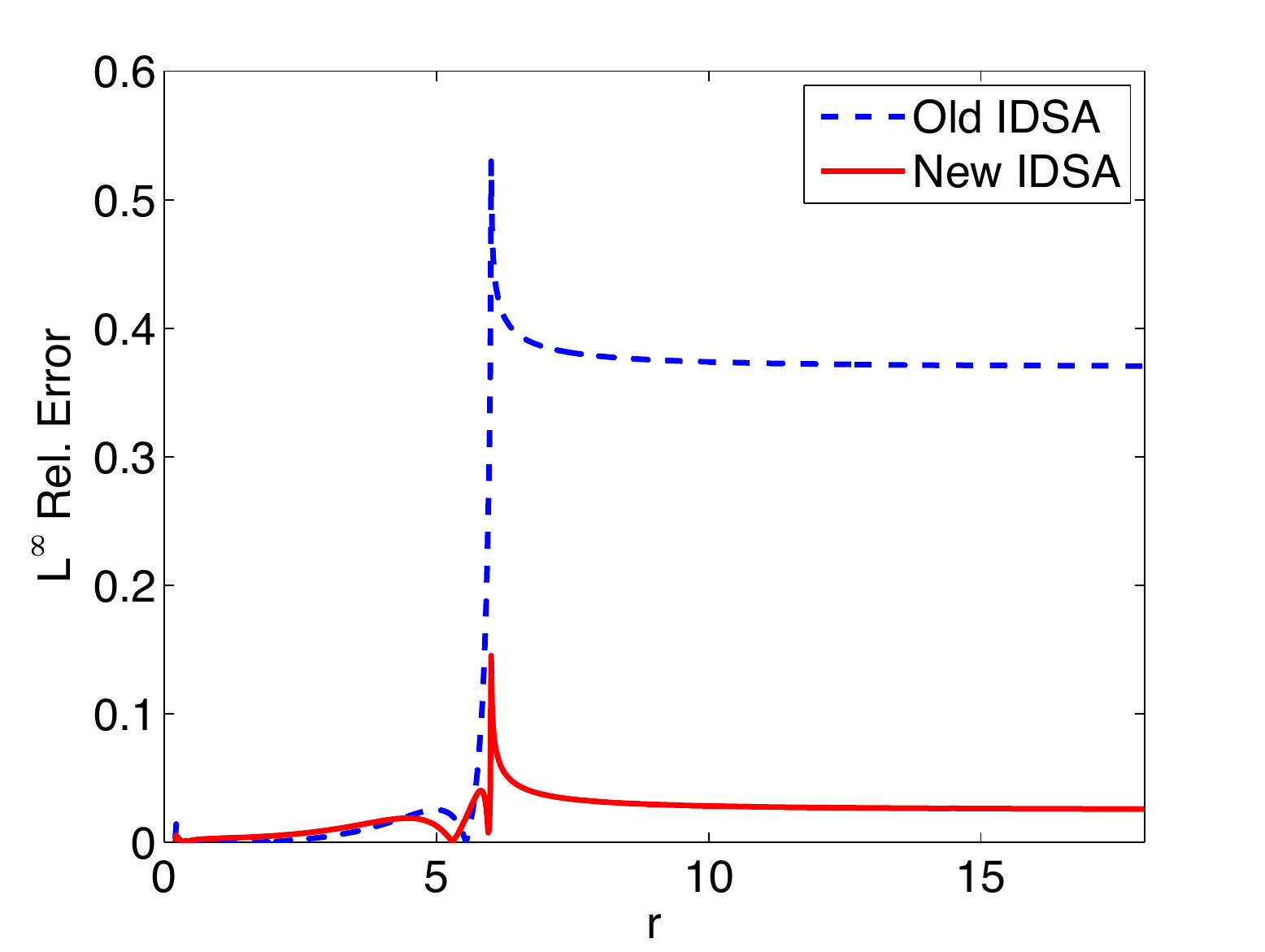}
\caption{Plot of the pointwise relative error of the Old IDSA (in blue) and of
  the New IDSA (in red) applied to the homogeneous sphere of radius
  $R=6$ and $\kappa = 1$.}
\label{fig:ErrIDSAoldnew}
\end{figure}

As a first example, we apply the Old IDSA and the New IDSA to the
homogeneous sphere of radius $R=6$ with $\kappa = 1$. Figure
\ref{fig:ErrIDSAoldnew} shows the relative approximation
error of these two schemes. We notice that the Old IDSA (in blue)
has a considerably larger error in the free streaming region where
$r\geq R$ than the New IDSA (in red). These results show that even if
the flux factors are accurate and the 
diffusion relation given in Proposition \ref{prop:FFIDSA} is
satisfied by the Old IDSA approximation, it does not mean that the
approximation is good. The mo\-di\-fi\-ca\-tions given is Section
\ref{sec:Impro} greatly improve the situation. The error still has a
peak at the homogeneous sphere radius, but this is expected as a
consequence of the definition of approximations
\eqref{eq:IDSAsansReac} and \eqref{eq:IDSANew1}. A better view of this
example is given in Figure \ref{fig:OldAndNewIDSA} where we also give 
the value of the flux factors $h$ and $k$ which can be computed from
the value of $J^t$ and $J^s$ from Proposition \ref{prop:FFIDSA}. 

\begin{figure}[t]
\centering

\subfigure{\label{Fig:I-4-OJ.a}\includegraphics[width=.482\textwidth]{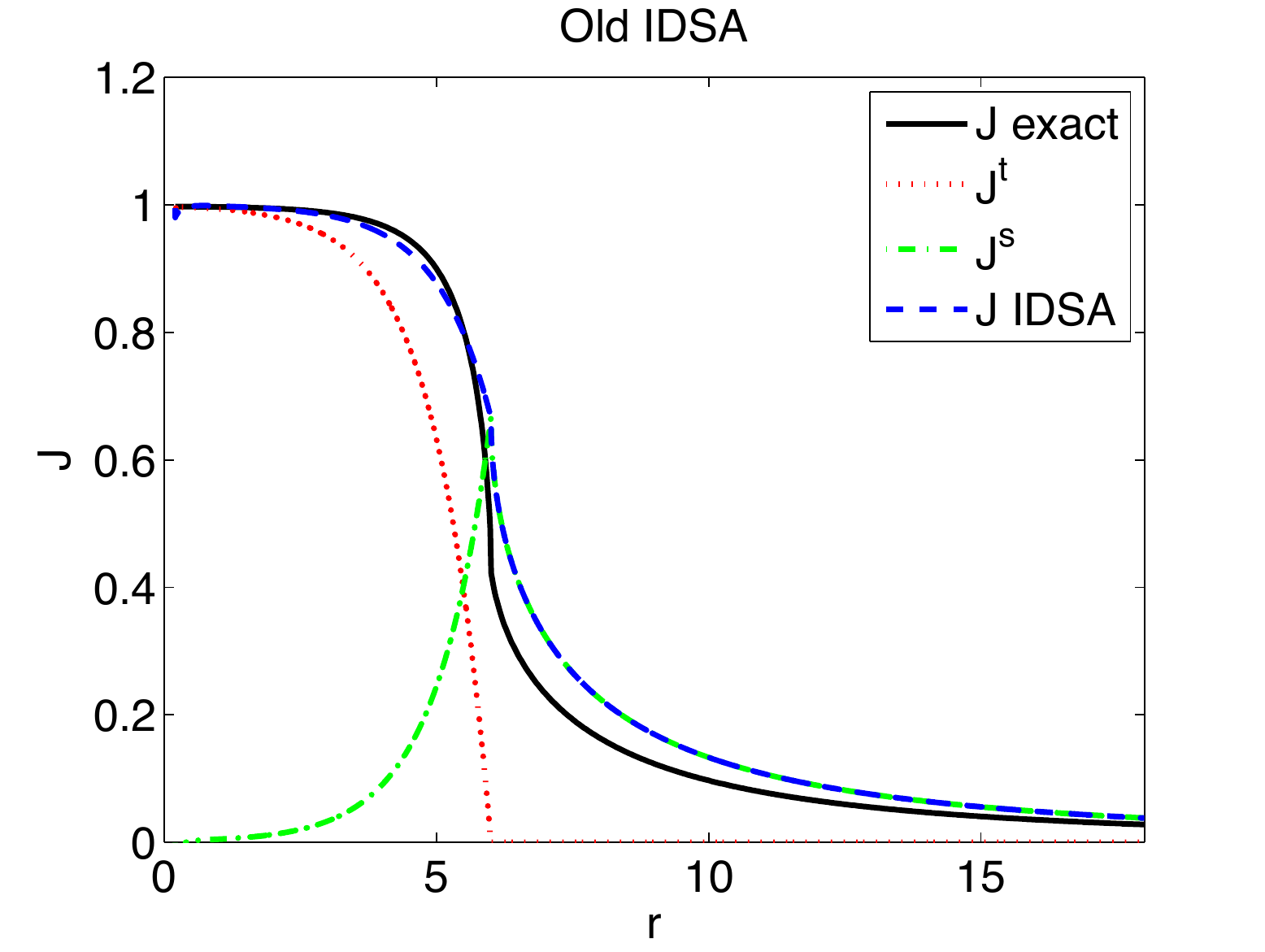}}
\
\subfigure{\label{Fig:I-4-NJ.b}\includegraphics[width=.482\textwidth]{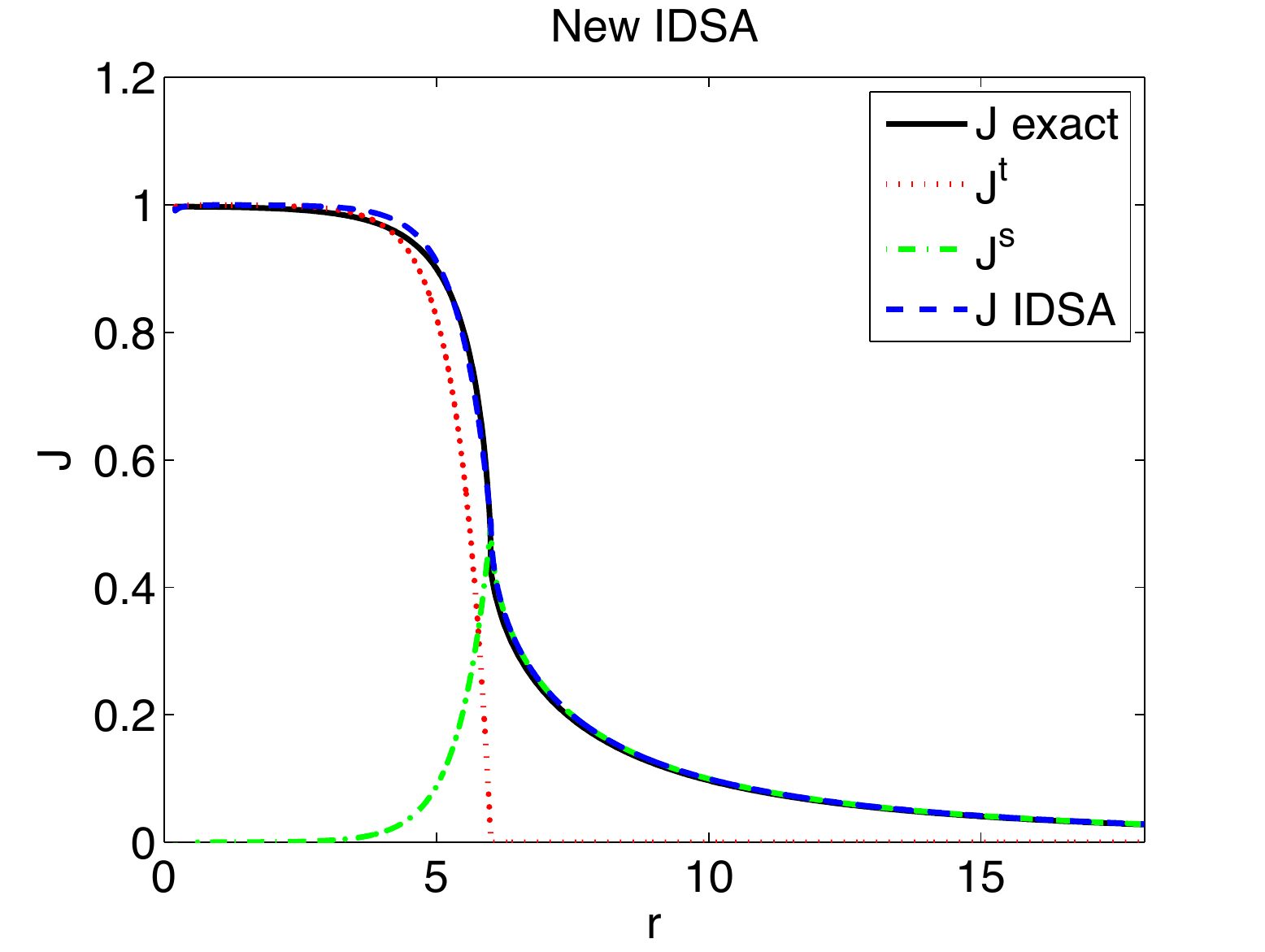}}
\hfill

\subfigure{\label{Fig:I-4-Oh.c}\includegraphics[width=.482\textwidth]{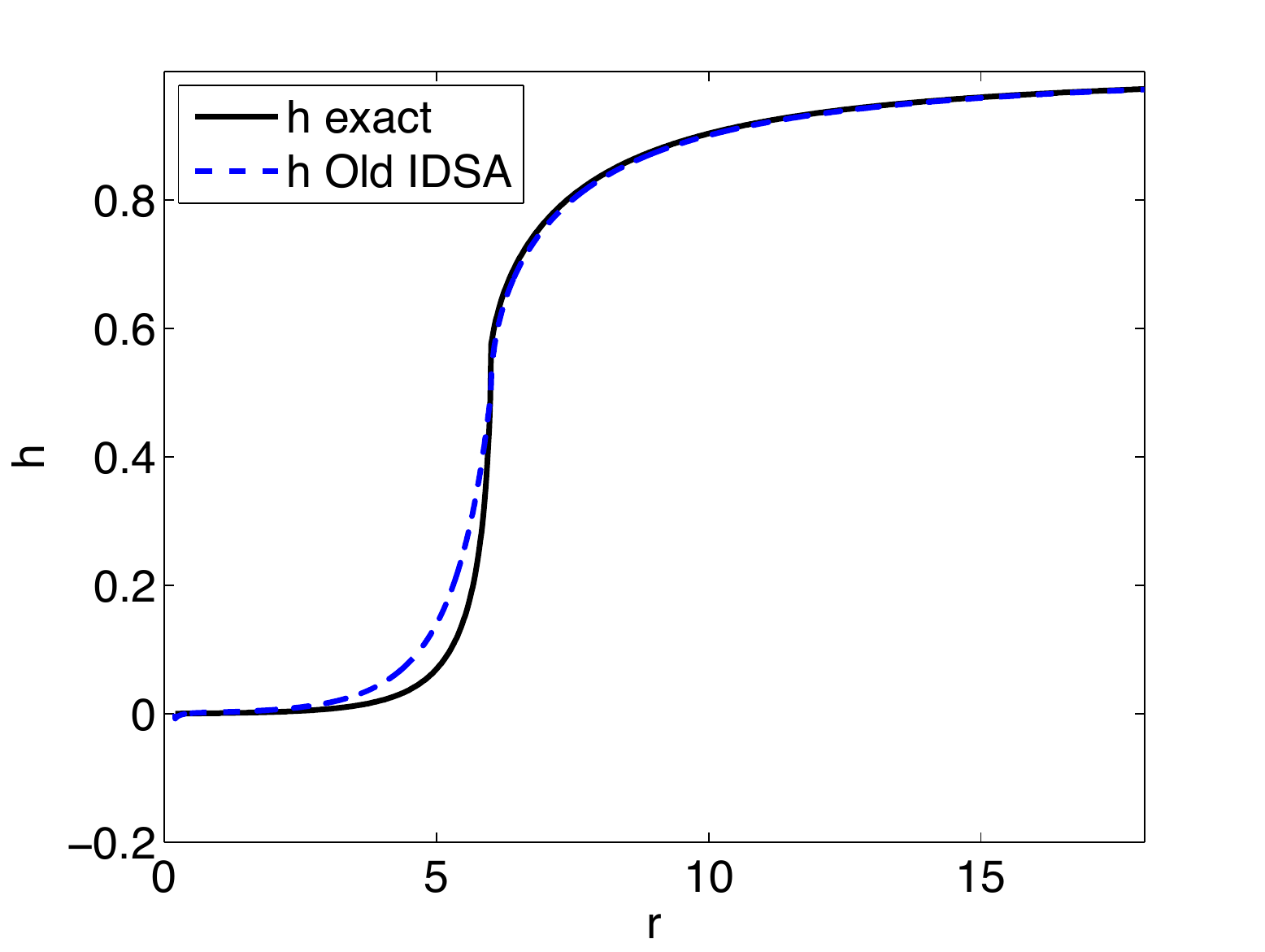}}
\
\subfigure{\label{Fig:I-4-Nh.d}\includegraphics[width=.482\textwidth]{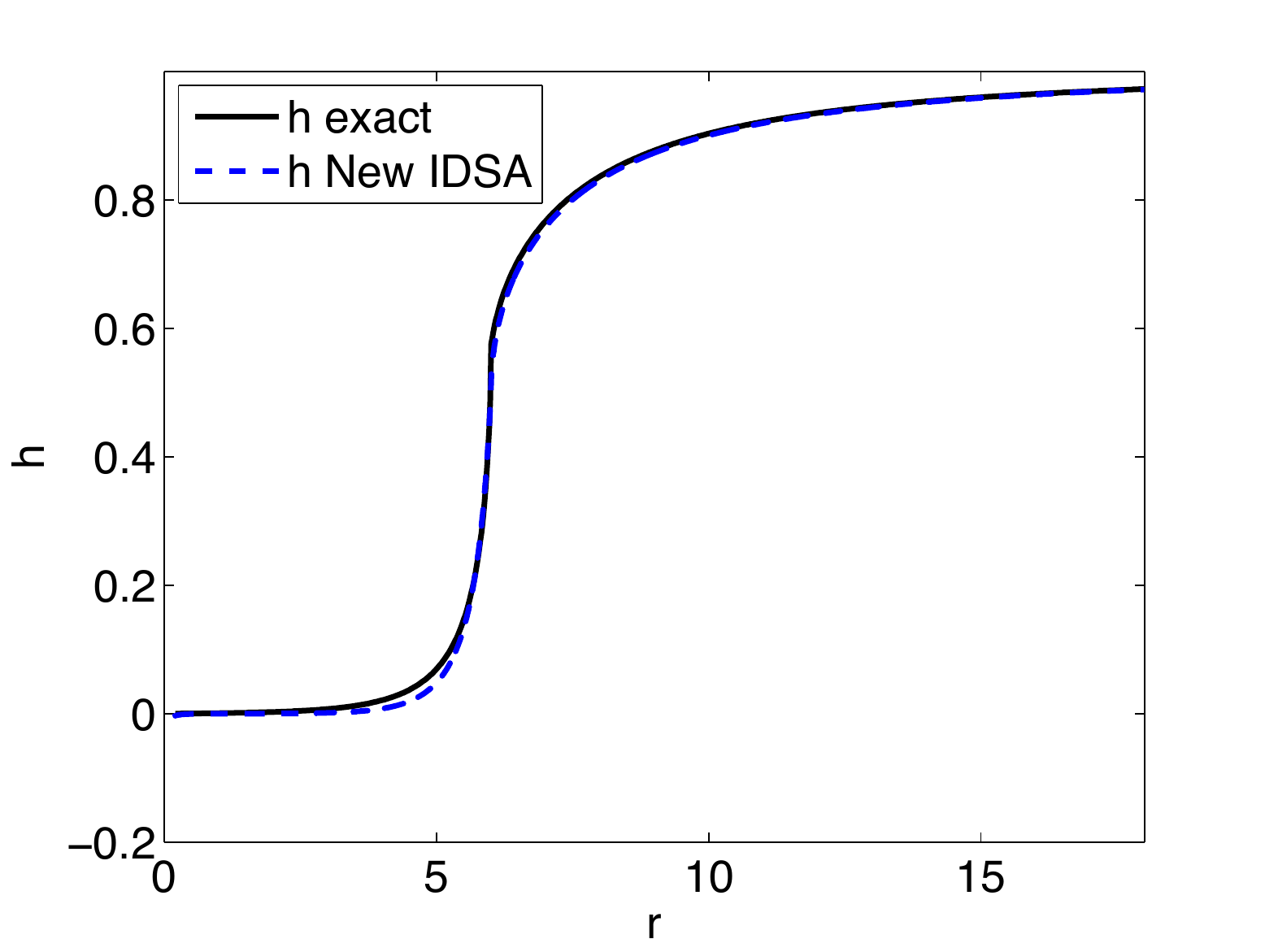}}
\hfill

\subfigure{\label{Fig:I-4-Ok.e}\includegraphics[width=.482\textwidth]{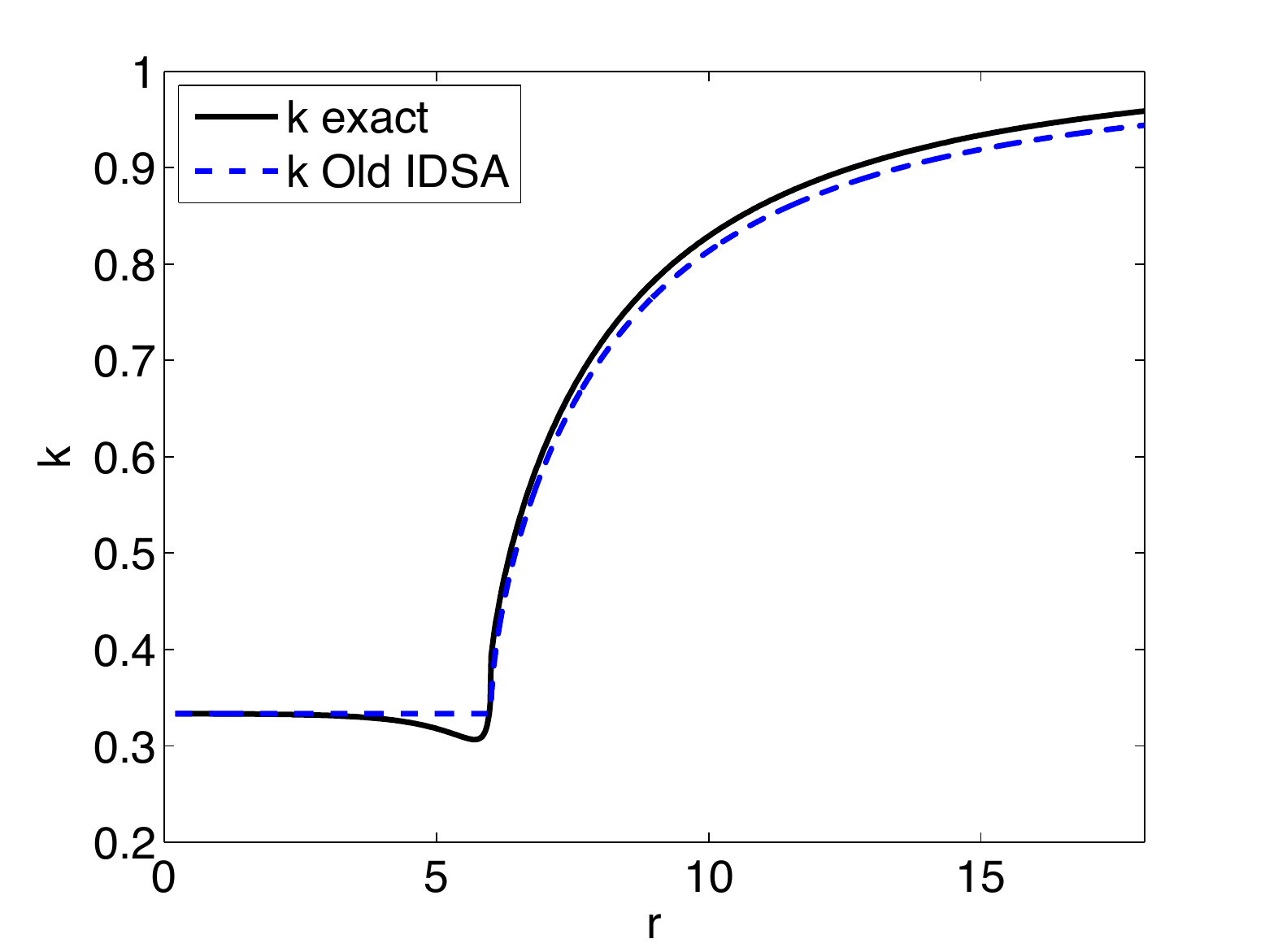}}
\
\subfigure{\label{Fig:I-4-Nk.f}\includegraphics[width=.482\textwidth]{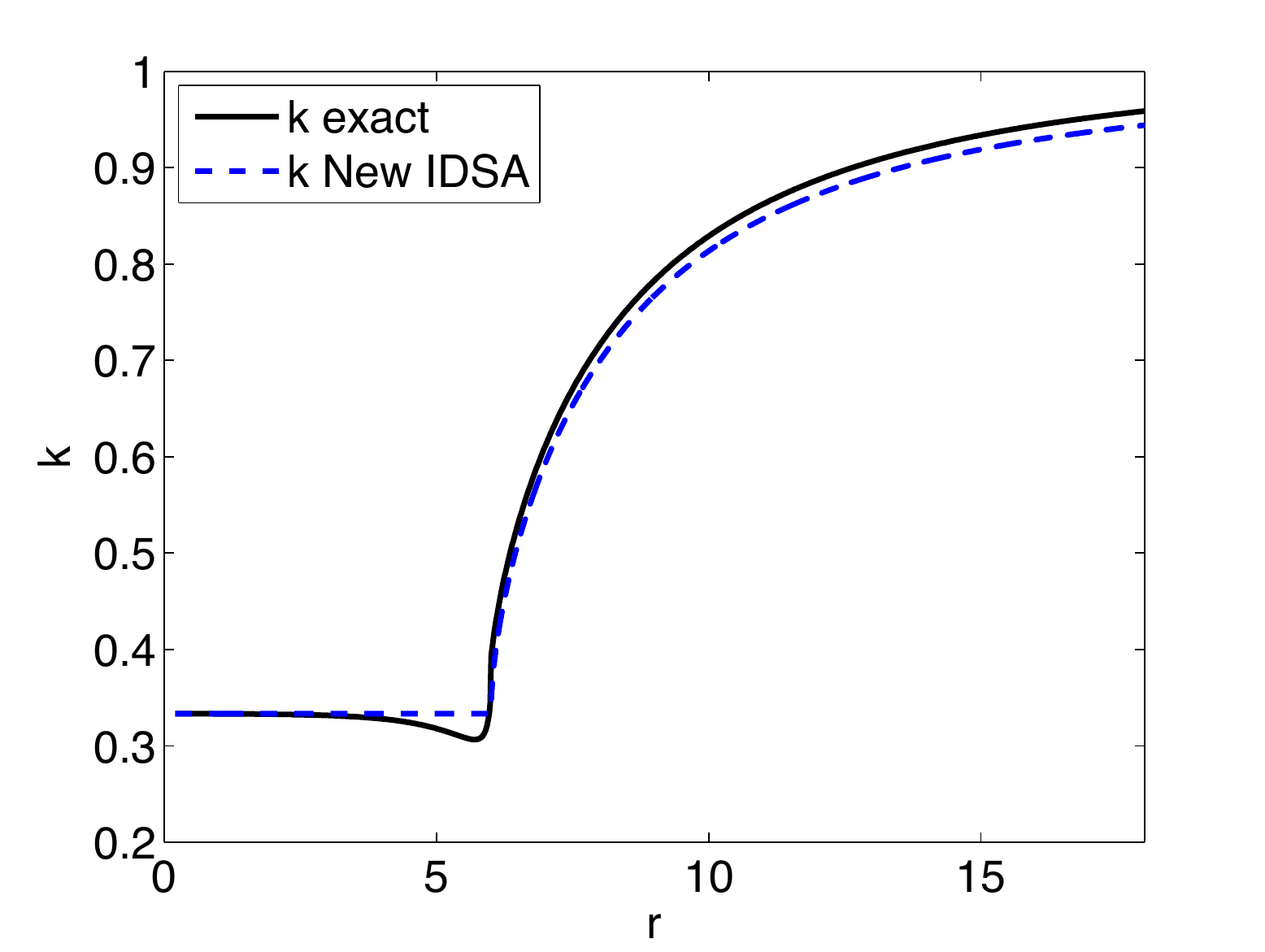}}
\hfill

\caption{Results of the Old IDSA (on the left) and of the New IDSA
  (right) on the homogeneous sphere of radius $R=6$ with $\kappa
  =1$. The first line gives the value of $J$, the second gives the
  flux ratio $h$ and the third gives the variable Eddington factor
  $k$. The exact solution is given by the black line and numerical
  results are given in blue. For the results of $J$, the trapped
  component is given in red and the free streaming component in green. }
\label{fig:OldAndNewIDSA}
\end{figure}
The results show that the Old IDSA overestimates the value of
streaming particles at the radius of the homogeneous sphere; this
leads to an overestimation of the streaming particles in the whole
region where $r\geq R$, explaining the results of Figure
\ref{fig:ErrIDSAoldnew}. We also notice that, compared to the New
IDSA, the Old IDSA underestimates the proportion of the trapped
particles and overestimates the proportion of the streaming
particles in the diffusive region ($r<R$). In a CCSN experiment,
the overestimation of streaming particles can lead to larger heating rates behind the shock and an artificially increased likeliness for explosions. From this viewpoint, one would say that the Old IDSA overestimates the neutrino heating, see \cite{LiebendoerferEtAl09big}. 
Regarding
the flux factors, we see in Figure \ref{fig:OldAndNewIDSA} that the
flux ratio of the Old IDSA is worse 
that the flux ratio of the New IDSA; that is the correction done in
the $J$ distribution reflects in the flux ratio. There are, however, no
differences in the approximation of the variable Eddington
factors. This is a consequence of the reconstruction of $H_{\rm idsa}$
and $K_{\rm idsa}$ given in Proposition \ref{prop:FFIDSA} from which
the flux factors are computed.
\begin{figure}[t!]
\centering
\includegraphics[width = \linewidth]{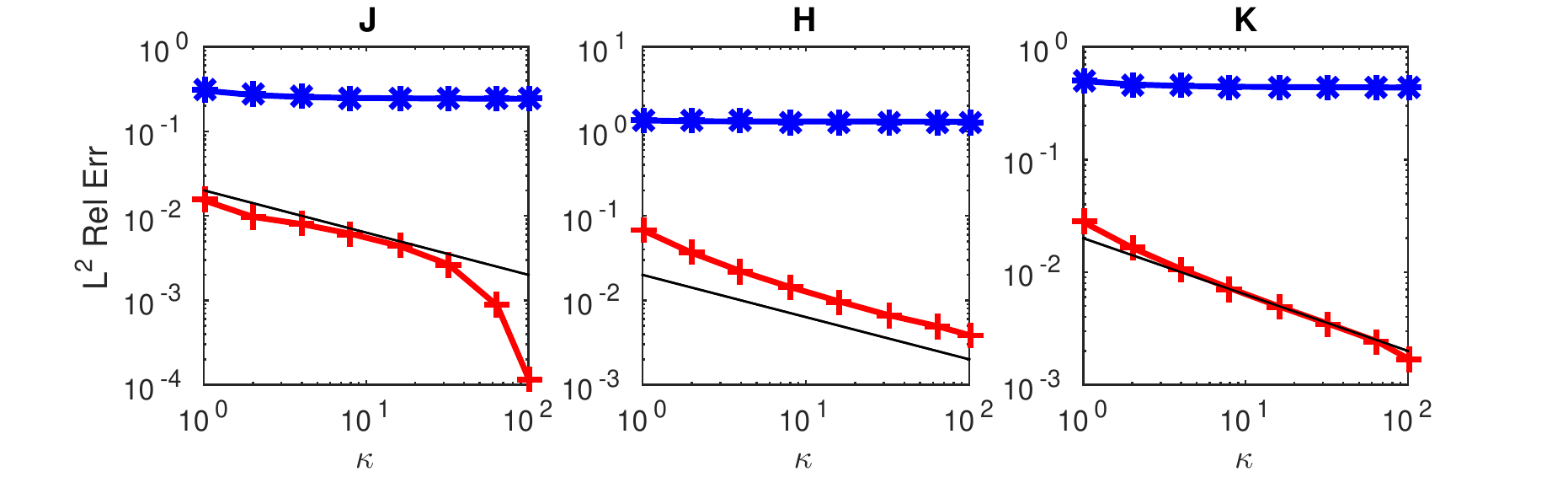}
\caption{Plot of the convergence of the $L^2$ relative errors for the
  homogeneous sphere test case. The radius of the homogeneous sphere
  is $R = 6$ and $\kappa$ is varied from $1$ to $100$. The results
  of the Old IDSA (in blue) do not converge when $\kappa \to \infty$
  whereas the results of the New IDSA (in red) converge to the exact
  solution of the homogeneous sphere. The numerical rate of
  convergence is of the order $\mathcal O(\kappa^{-1/2})$ for all the quantities. }
\label{fig:ConvergenceJHK}
\end{figure}

To complement the study, we let the values of $\kappa$ vary from $1$
to $100$ and compute the relative $L^2$ errors. In Figure
\ref{fig:ConvergenceJHK}, we see that the errors 
of the Old IDSA (in blue) do not converge even if the flux factors are
pretty accurate according to Figure \ref{fig:OldAndNewIDSA}, whereas
the errors of the New IDSA (in red) converge to the exact values of
 $J$, $H$ and $K$ of the homogeneous sphere given in
 \eqref{eq:JHom}--\eqref{eq:KHom}. The numerical order of 
convergence of these three quantities is $\mathcal
O(\kappa^{-1/2})$. This rate of convergence has only been obtained
numerically and we want to prove this fact in future work.

The first panel of Figure
\ref{fig:ConvergenceJHK} shows that the convergence of $J$ accelerates
for large values of $\kappa$, but this behavior has yet to be understood.
\section{Conclusion and outlook}\label{sec:Conclusion}
In this paper, we have discussed the behavior of the IDSA on the
homogeneous sphere test case. We have shown that the current
implementation of this approximation has modeling issues coming from
the de\-fi\-ni\-tion of the coupling mechanism, that is from the diffusion
source $\Sigma$. The two main problems are the \emph{Spurious Trapped}
and \emph{Instability} problems. In the case of the homogeneous
sphere, we have changed the coupling mechanism assuming the different
regions where each regime apply to be known. Doing so, the two
modeling problems are avoided allowing us to access the modeling error
of the IDSA. We show with numerical experiments that the IDSA is
overestimating the streaming component both in the diffusive and in
the free streaming part of the domain. To address the other issue, we
proposed a new version of the IDSA based on the analytical solution of
the homogeneous sphere. This very particular model has been shown to
converge to the solution of the infinitely opaque homogeneous sphere case. We also used the analytical solution of this case to obtain the flux ratio and the
variable Eddington factor for the
stationary state free streaming limit used in the IDSA. The new
version of the IDSA presented has also been shown to converge for the
first and second order moments with these flux factors. The order of
convergence has been numerically estimated to $\mathcal
O(\kappa^{-1/2})$ and the proof of this fact is still missing.

We have shown in this paper that simplifying the model can shed some
light on the behavior of the approximation. To go further in the
analysis, we need to go back to more complicated radiative transfer
problems. The analysis presented in this work is very specific to the
homogeneous sphere example. However, many astrophysical problems do
have a very similar structure, a dense body surrounded by a
transparent atmosphere. This is the case for example in CCSN
simulations, stellar winds, planetary disks and many other
astrophysical problems. A possible direction in which our results can
be extended is to use the neutrinosphere radius as the limiting
radius for trapped particles. Note that Remark \ref{rem:Rnu} shows
that these two radius are usually not equal, but close when $\kappa$
is large. This allows the computation of trapped
particles with the New IDSA scheme. Then, one needs to find a way to
reconstruct the streaming particle distribution inside the
neutrinosphere in order to obtain a boundary condition for the
transport of streaming neutrinos outside the neutrinosphere. Once such
a condition is obtained, one can use any radiative transfer code to
compute the outside distribution.

\bibliographystyle{AIMS}
\bibliography{DCDSSJM}

\medskip
Received xxxx 20xx; revised xxxx 20xx.
\medskip

\end{document}